\documentclass[10pt]{amsart}
\usepackage{graphicx,amssymb,amsfonts,amsmath,amsthm,newlfont}
\usepackage{epsfig,url}
\usepackage{color}
\usepackage{enumitem}

\usepackage[all,2cell]{xy} \UseAllTwocells \SilentMatrices

\vfuzz2pt 
\hfuzz2pt 
\newtheorem{thm}{Theorem}[section]
\newtheorem{problem}{Problem}[section]
\newtheorem{cor}[thm]{Corollary}
\newtheorem{lem}[thm]{Lemma}
\newtheorem{prop}[thm]{Proposition}
\theoremstyle{definition}
\newtheorem{defn}[thm]{Definition}

\newtheorem{example}[thm]{Example}

\theoremstyle{remark}
\newtheorem{rem}[thm]{Remark}
\numberwithin{equation}{section}

\newcommand{\Z}{\mathbb Z}
\newcommand{\C}{\mathbb C}

\newcommand{\R}{\mathbb R}
\newcommand{\N}{\mathbb N}
\newcommand{\Pro}{\mathbb P}

\newcommand{\I}{\mathbb{I}}

\newcommand{\gr}{\mathrm{gr}}

\newcommand{\MT}{\mathcal{MT}}

\newcommand{\Dom}{\mathrm{S}}

\newcommand{\PGL}{\mathrm{PGL}}

\newcommand{\av}{\mathsf{a}}
\newcommand{\bv}{\mathsf{b}}

\newcommand{\zetam}{\zeta^{ \mathfrak{m}}}
\newcommand{\zetau}{\zeta^{ \mathfrak{u}}}

\newcommand{\Q}{\mathbb Q}

\newcommand{\To}{\longrightarrow}

\newcommand{\A}{\mathbb{A}}

\newcommand{\Or}{\mathcal{O}}

\newcommand{\mm}{\mathfrak{m} }

\newcommand{\uu}{\mathfrak{u} }

\newcommand{\rel}{\mathrm{rel}}

\newcommand{\Mod}{\mathcal{M}}

\newcommand{\ord}{\mathrm{ord}}


\addtolength{\hoffset}{-0.2in} \addtolength{\textwidth}{0.2in}
\addtolength{\voffset}{-0.2in} \addtolength{\textheight}{0.3in}

\begin{document}
\author{Francis Brown}
\begin{title}[Irrationality proofs for zeta values, moduli spaces and dinner parties]{Irrationality proofs for zeta values, moduli spaces and dinner parties}\end{title}
\maketitle

 \begin{abstract}
 A simple  geometric construction on the moduli spaces $\Mod_{0,n}$ of curves of genus $0$ with $n$
  ordered marked points  is described which gives a common framework for  many  irrationality proofs for zeta values. 
 This construction yields Ap\'ery's approximations to $\zeta(2)$ and $\zeta(3)$, and for larger $n$,  
an infinite family of small linear forms in  multiple zeta values with an interesting algebraic structure. 
It also contains a generalisation of
 the linear forms used by Ball and Rivoal  to prove that infinitely many odd zeta values are irrational. 
   \end{abstract}

 \section{Introduction}
 \subsection{Summary} A folklore conjecture states that the values of the Riemann zeta function at odd integers $\zeta(3), \zeta(5), \ldots$ and $\pi$ 
 are algebraically independent over  $\Q$.   Very little is known about this conjecture, except for the following remarkable facts:
 \begin{enumerate}
 \item That $\pi$ is transcendental, proved by Lindemann in 1882.
 \item That $\zeta(3)$ is irrational, proved by  Ap\'ery in 1978.
 \item  That the  $\Q$-vector space spanned by the odd zeta values $\zeta(3), \zeta(5),\ldots$ is infinite dimensional (proved by Ball and   Rivoal \cite{RiCRAS,  B-R}).
 \item  That at least one amongst  $\zeta(5),\zeta(7),\zeta(9),\zeta(11)$ is irrational (Zudilin \cite{Zu}).
 \end{enumerate}
The irrationality of $\zeta(5)$, or $\zeta(3)/\pi^3$,   are  open problems.
 All of the  above results can  be proved by  constructing small linear forms in zeta values 
using elementary  integrals. Quantitive results, such as bounds on the irrationality
 measures of $\zeta(2)$   and $\zeta(3)$   \cite{RV1,RV2}, and bounds on the transcendence measure of $\pi^2$  \cite{Sorokin},
 can also be obtained by similar methods.
 
 The starting point for this paper is the observation that the integrals in all these proofs are equivalent, after a suitable change of  variables,  to  period integrals on the moduli space  $\Mod_{0,n}$ of 
 curves of genus zero with $n$  marked points.   Conversely, we know by \cite{BrENS}, or \cite{GM} together with \cite{BrMTZ},
 that all period integrals on $\Mod_{0,n}$ are linear forms in multiple zeta values. This provides a huge family  of potential candidates for generalising the above results. Unfortunately, the typical period integral involves all multiple zeta values up to weight $n-3$ and is ill-adapted  for an irrationality proof. 
 In this paper, we describe a  narrower  class of period integrals on $\Mod_{0,n}$, based on a variant of  the classical dinner table problem  \cite{Dinner, Poulet},
in which certain multiple zeta values vanish. We show that this restricted family of integrals  has some special properties, and reproduces most, and possibly all, the results alluded to in the first paragraph.

 \subsection{Structure of irrationality proofs} \label{sectStruct}
  The basis for the above results is the construction of small linear forms in zeta values.
 More generally,  suppose that we have:
 \begin{enumerate}[itemsep=2pt,parsep=0pt]
 \item For all $n \geq 0$, a non-zero  $\Q$-linear combination
 $$ I_n = a^{(1)}_n \zeta_1 + \ldots + a^{(k)}_n \zeta_k\ , $$ 
 where $a^{(i)}_n  \in \Q$, and $\zeta_1, \ldots, \zeta_k$ are  fixed  multiple zeta values. 
 \item A bound on the linear forms $I_n$. For example, they satisfy an  inequality 
 $$0 < |I_n| < \varepsilon^n$$
 for all $n \geq 0$, where $\varepsilon $ is a small positive real number.
 \item Some control on the coefficients $a^{(i)}_j$.  At its most basic, 
 this is simply a bound on the denominators of $a^{(k)}_n$ as a function of $n$. This is often a function of 
 $$ d_n = \mathrm{lcm}\{1,\ldots, n\} $$
 The prime number theorem implies that $\lim_{n\rightarrow \infty} d_n^{1/n} =e.$
  \end{enumerate}
 Only in very specific cases, when the bounds $(3)$ on the coefficients are favourable compared to the constant $\varepsilon$ in  $(2)$, 
  can one deduce irrationality results.    For Ap\'ery's theorem, $k=2$ and one constructs linear forms $I_n=a_n \zeta(3)+ b_n$,  for example, as integrals $(\ref{AIBeuk3})$. 
  In this case, a   bound on the denominators of $a_n, b_n$ suffices:
  we have 
  $$ \varepsilon = (\sqrt{2}-1)^4 \qquad \hbox{ and }  \qquad a_n \in \Z, \quad  d_n^3 b_n \in \Z$$
   and the inequality 
   $$ e^3 \varepsilon = 0.591\ldots  <1$$
   is enough to deduce the irrationality of $\zeta(3)$.   For Ball and Rivoal's theorem, one constructs linear forms in odd zeta values $\zeta(2m+1)$ and applies a criterion due to Nesterenko \cite{N} which depends on the size of  both the denominators and the numerators of the coefficients  $a^{(i)}_j$
 to deduce a lower bound for
 $\dim_{\Q} \langle \zeta_1,\ldots, \zeta_k\rangle_{\Q}$.
 \footnote{Condition $(2)$ must  be slightly modified: one can assume by clearing denominators that $I_n$ has integer coefficients, and one needs to know that $|I_n|^{1/n}$ has a small positive limit  as $n\rightarrow \infty$.}

   Unfortunately, there are very few cases where this works, which motivates trying to reach a better understanding 
 of the general principles involved. We refer to  Fischler's Bourbaki talk for an excellent survey of known results \cite{FiBourbaki}.

 \subsection{Periods of moduli spaces $\Mod_{0,n}$}
A large supply  of linear forms  satisfying $(1)-(3)$ comes from period integrals on moduli spaces.
  Let $n\geq 3$ and let $\Mod_{0,n}$ denote the moduli space of curves of genus zero with $n$ ordered marked points.  It is isomorphic to the complement  in affine space $\A^\ell$, where $\ell=n-3$, of a hyperplane configuration
 $$\Mod_{0,n} = \{(t_1,\ldots, t_{\ell})  \in \A^{\ell}:  t_i \neq t_j , t_i \neq 0,1 \}\ .$$
  A connected component of $\Mod_{0,n}(\R)$ is given by the simplex
  $$\Dom_n = \{(t_1,\ldots, t_{\ell} )\in \R^{\ell}:  0 <  t_1 < \ldots < t_{\ell} <  1\}\ .$$
 Examples of  period integrals on $\Mod_{0,n}$ can be expressed as 
 \begin{equation} \label{IntM0n} \int_{\Dom_n}  \prod t_i^{a_i} (1-t_j)^{b_j}(t_i-t_j)^{c_{ij}} dt_1 \ldots dt_{\ell}
 \end{equation}
 for suitable $a_i, b_j, c_{ij} \in \Z$  such that the integral converges. 
  For such a family of integrals, the first property $(1)$ is guaranteed by the following  theorem:
 
 \begin{thm} \label{thmperiodsofmon} The periods of moduli spaces $\Mod_{0,n}$ are $\Q[2\pi i]$- linear combinations of multiple zeta values 
  of total weight $\leq \ell$.
  \end{thm}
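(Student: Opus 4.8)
The plan is to interpret the integral $(\ref{IntM0n})$ motivically and then invoke the known structure of the category of mixed Tate motives over $\Z$. Concretely, the integrand $\omega = \prod t_i^{a_i}(1-t_j)^{b_j}(t_i-t_j)^{c_{ij}}\, dt_1\cdots dt_\ell$ is an algebraic de Rham form, regular away from the divisors $t_i=t_j$, $t_i=0,1$, and the domain $\Dom_n$ is the interior of a relative homology cycle; the period $(\ref{IntM0n})$ is exactly the pairing $\langle [\omega],[\Dom_n]\rangle$. The two ingredients I would need are: first, that the relevant relative cohomology of $\Mod_{0,n}$ underlies a mixed Tate motive \emph{unramified over} $\Z$; and second, that every period of such a motive lies in the $\Q[2\pi i]$-span of multiple zeta values. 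The second is the hard input, supplied by \cite{BrMTZ}, while the first is geometric and follows \cite{GM}.

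For the first step I would work on the Deligne--Mumford compactification $\overline{\Mod}_{0,n}$, a smooth projective scheme over $\mathrm{Spec}\,\Z$ whose boundary $\partial\overline{\Mod}_{0,n}$ is a normal crossing divisor all of whose strata are products of smaller spaces $\overline{\Mod}_{0,m}$. Since $\overline{\Mod}_{0,n}$ is an iterated blow-up of a projective space along linear (hence Tate) centres, its motive splits as a sum of Tate twists $\Q(-k)$, and the same holds, by induction on $n$, for every boundary stratum. One then separates the boundary divisors into those bounding the cell $\Dom_n$ (call them $B$) and the rest (call them $A$, which carry the polar locus of $\omega$), and checks that the relative motive
\[ M \;=\; H^{\ell}\big(\overline{\Mod}_{0,n}\setminus A,\; B\setminus(A\cap B)\big) \]
is an object of $\MT(\Z)$. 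The essential arithmetic point is that $\overline{\Mod}_{0,n}$ has \emph{good reduction at every finite prime}, so the Tate structure is unramified over all of $\Z$, and not merely over some ring of $S$-integers; this is precisely what confines the periods to honest multiple zeta values rather than to values at roots of unity.

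Granting that $M \in \MT(\Z)$, the main theorem of \cite{BrMTZ} shows that the period algebra of $\MT(\Z)$ is contained in the $\Q[2\pi i]$-span of multiple zeta values, so the pairing $(\ref{IntM0n})$ is of the asserted form. The weight bound is then bookkeeping: $M$ sits in cohomological degree $\ell$ of an $\ell$-dimensional variety, so its weight filtration, normalised so that $\Q(-1)$ has weight $2$, is concentrated in weights $0 \le w \le 2\ell$; since a multiple zeta value of weight $N$ is a period of a motive of weight $2N$, only multiple zeta values of weight $N \le \ell$ can occur, giving total weight $\le \ell$.

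I expect the genuine obstacle to be the two cited inputs rather than the assembly: establishing $M \in \MT(\Z)$ requires the full good-reduction and normal-crossing analysis of $\overline{\Mod}_{0,n}$ of \cite{GM}, and the passage from ``mixed Tate over $\Z$'' to ``$\Q[2\pi i]$-combination of multiple zeta values'' rests on the Galois-theoretic surjectivity results of \cite{BrMTZ}. As an alternative avoiding this machinery, one can prove the statement directly for the explicit integrals $(\ref{IntM0n})$ by induction on $\ell$: integrating the innermost variable $t_1$ over $(0,t_2)$ expresses the result through multiple polylogarithms in $t_2,\dots,t_\ell$, and iterating this reduction one variable at a time raises the weight by at most one per step and terminates in multiple zeta values, which is essentially the algorithm of \cite{BrENS}. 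There the technical heart is exhibiting a class of iterated integrals on the smaller moduli spaces that is closed under these integrations and whose regularised limiting values are multiple zeta values.
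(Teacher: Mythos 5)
Your proposal follows precisely the two routes that the paper itself invokes as the proof of this theorem: the motivic argument, in which the relative cohomology $m(A,B)$ is shown to lie in $\MT(\Z)$ following \cite{GM} and the period statement is then supplied by \cite{BrMTZ}, and, as your stated alternative, the effective variable-by-variable integration algorithm of \cite{BrENS}. Your assembly of the motivic route and the weight bookkeeping (weights of $m(A,B)$ concentrated in $[0,2\ell]$, hence only multiple zeta values of weight at most $\ell$ can appear) match the paper's treatment, so this is essentially the same approach.
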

   A general recipe  for constructing linear forms in multiple zeta values is to consider a family of  convergent  integrals
  \begin{equation}  \label{INdef}
   I_{f,\omega}(N) = \int_{\Dom_n} f^N \omega 
  \end{equation}
 where $\omega \in \Omega^{\ell} (\Mod_{0,n}; \Q)$ is a regular $\ell$-form, and $f \in \Omega^0(\Mod_{0,n};\Q)$. If, furthermore, one  imposes that  the rational function $f$ has zeros   along the boundary\footnote{or more precisely,
 the boundary of the inverse image of  $\Dom_n$ in the Deligne-Mumford-Knudsen compactification $\overline{\Mod}_{0,n}$ of $\Mod_{0,n}$}
  of   $\Dom_n$,
   then the integrals $I_{\ell}$ will be small, and condition  $(2)$ will automatically  hold as well, for some  small $\varepsilon$. 
  The proof of theorem  $\ref{thmperiodsofmon}$ given in \cite{BrENS} is   effective and should  in principle yield explicit bounds on the denominators (and numerators) of the  rational coefficients $a^{(i)}_j$ as a function of the order of the poles of the integrand. Furthermore, 
  
\begin{prop} All diophantine constructions  mentioned above  (with the possible exception of Zudilin's theorem $(4)$) can be expressed as   integrals of the type $(\ref{INdef})$.
\end{prop}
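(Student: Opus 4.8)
The plan is to verify the claim construction by construction, in each case exhibiting an explicit change of variables that carries the classical integral onto the simplex $\Dom_n$ and puts its integrand into the shape $f^N\omega$ of $(\ref{INdef})$. The correct value of $n$ is fixed by the weight: since $(\ref{IntM0n})$ has weight $\leq \ell = n-3$, the approximations to $\zeta(2)$ require $n=5$, those to $\zeta(3)$ require $n=6$, and the linear forms of Ball and Rivoal require $n$ growing with the number of odd zeta values involved. The uniform point to check in every case is that the numerator and denominator exponents of the classical integrand all scale linearly in a single parameter $N$, so that a \emph{fixed} regular function $f\in\Omega^0(\Mod_{0,n};\Q)$ and a \emph{fixed} regular form $\omega\in\Omega^\ell(\Mod_{0,n};\Q)$ suffice, with only the power $N$ varying.

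First I would treat the Beukers integrals underlying Ap\'ery's theorem. For $\zeta(3)$ one has
\begin{equation*}
 I_n = \int_{[0,1]^3} \frac{\bigl(x(1-x)\,y(1-y)\,z(1-z)\bigr)^n}{\bigl(1-(1-xy)z\bigr)^{n+1}}\, dx\,dy\,dz,
\end{equation*}
and the required form is read off at once by setting $f = x(1-x)y(1-y)z(1-z)/(1-(1-xy)z)$ and $\omega = dx\,dy\,dz/(1-(1-xy)z)$, so that $I_n = I_{f,\omega}(n)$; the exponent $n+1$ splits as $n$ from $f^n$ together with the single power in $\omega$. A standard change of variables identifies the interior of the cube $[0,1]^3$ with $\Dom_6\subset\Mod_{0,6}(\R)$, under which $\omega$ becomes regular on $\Mod_{0,6}$ (its polar locus $1-(1-xy)z=0$ being one of the removed boundary hyperplanes) and $f$ becomes a ratio of boundary-defining functions vanishing on $\partial\Dom_6$. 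The analogous two-dimensional computation handles $\zeta(2)$ with $n=5$. The Rhin--Viola integrals \cite{RV1,RV2} and Sorokin's integral for $\pi^2$ \cite{Sorokin} are of exactly the same Beukers type, differing only in the choice of integer exponents, so the identical argument applies once those exponents are checked to scale with $N$.

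The main work, and the principal obstacle, is the Ball--Rivoal family \cite{RiCRAS,B-R}. Their linear forms in odd zeta values arise from very-well-poised hypergeometric series, equivalently from Beukers-type multiple integrals over $[0,1]^a$ whose integrand is a product $\prod_i x_i^{\alpha_i N}(1-x_i)^{\beta_i N}$ divided by a power of $1-x_1\cdots x_a$. The difficulty is to find, for arbitrary $a$, the birational change of variables $\Mod_{0,a+3}\to [0,1]^a$ under which the factor $1-x_1\cdots x_a$ and the coordinate factors $x_i,\,1-x_i$ all become (products of) the functions cutting out the boundary divisors of $\Dom_{a+3}$, so that the entire integrand collapses to a single $f^N$ times a regular form $\omega$; this is exactly where the uniform linear dependence of all exponents on $N$ must be verified. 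Once this is done, the vanishing of $f$ along $\partial\Dom_{a+3}$ yields condition $(2)$ automatically, while Theorem $\ref{thmperiodsofmon}$ supplies condition $(1)$. Finally, Zudilin's theorem $(4)$ is set aside because its proof relies on additional group-theoretic and arithmetic input not visibly encoded in a single integral of the form $(\ref{INdef})$.
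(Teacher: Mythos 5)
Your plan stalls exactly where you yourself locate the ``main work,'' and the proposal never completes it: for the Ball--Rivoal family you only \emph{name} the required change of variables and then write ``Once this is done\ldots,'' so the crucial case---the deepest of the results the proposition is meant to cover---is left unproved. This is a genuine gap. Moreover, the difficulty is misplaced. Once the Ball--Rivoal linear forms are written as integrals over $[0,1]^a$ whose integrand is built from the factors $x_i$, $1-x_i$ and partial products $1-x_1\cdots x_k$, there is nothing left to ``find'': by the definition of cubical coordinates $(\ref{cubechangeofvars})$, $(\ref{ExpCubeCoords})$, the open unit cube \emph{is} the cell $\Dom_{\delta^0}$, those factors are invertible regular functions on $\Mod_{0,n}$ whose zero loci are boundary divisors, and any such integral is already of the shape $(\ref{GeneralCubicalInt})$, hence of type $(\ref{INdef})$. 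The genuine content, which you pass over in a single word (``equivalently''), is the identity between the very-well-poised hypergeometric series actually used by Ball and Rivoal and the integrals $(\ref{AIRiv1})$/$(\ref{AIRiv2})$; this is a nontrivial hypergeometric theorem (Zudilin \cite{Zu2}, Theorem 5, together with Fischler \cite{FiBourbaki}, \S 2.3--2.4), and it is precisely what the paper's proof invokes: the paper proves the proposition by citing Fischler's results \cite{Fi}, \cite{FiBourbaki} to convert each integral of Appendix 1 into a form equivalent to $(\ref{INdef})$.

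Two smaller inaccuracies point the same way. For Beukers' integral $(\ref{AIBeuk3})$ the polar locus $1-(1-xy)z=0$ is \emph{not} one of the removed hypersurfaces in $(\ref{ExpCubeCoords})$, so the cube-to-$\Dom_6$ identification by cubical coordinates alone does not make your $\omega$ regular on $\Mod_{0,6}$; one needs the further birational change of variables $(\ref{ChangeofVarRV3})$ to carry this locus onto a boundary divisor, and the same issue recurs for Rhin--Viola's $(\ref{AIRV3})$. Likewise Sorokin's integrals $(\ref{AISo1})$ are not ``of exactly the same Beukers type'': their denominators are nested, and one must first clear them and rename variables as in \cite{Fi} to reach the moduli form $(\ref{AISo2})$. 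So your case-by-case strategy is the right one---it is also the paper's---but as written only the $\zeta(2)$ cases ($(\ref{AIBeuk2})$, $(\ref{AIRV2})$) are handled honestly; every other case either silently assumes the needed change of variables or explicitly defers it.
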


 The proof of this proposition uses results due to Fischler to convert the integrals listed in Appendix $1$ into a form equivalent to $(\ref{INdef})$.
 Nonetheless, finding good linear forms in zeta values amongst the integrals $(\ref{INdef})$ is significantly harder than finding  a needle in a haystack. 
 For example, the general integral yielding linear forms in multiple zeta values of weight at most 5 (of interest, if one seeks linear forms in $1$ and $\zeta(5)$)
 depends on 20 independent parameters, which is hopelessly large.
 
 \subsection{Vanishing of coefficients} 
   Therefore examples  such as $(\ref{INdef})$  provide an enormous supply of candidates $I_N$ for irrationality proofs.  
   The problem with this approach is  that the linear forms $I_N$ involved are rather weak, and only enable one  to deduce linear  independence of
a small fraction of the numbers $\zeta_i$. Furthermore, the generic integral $(\ref{IntM0n})$ contains
all multiple zeta values of weight up to and including $\ell$.  Thus, the presence of terms such as $\zeta(2n)$, for example, for which one already knows the linear independence by Lindemann's theorem,  blocks any further progress. 
\vspace{0.1in}

One  therefore requires, in addition to $(1)-(3)$ above:
 
\begin{enumerate}[itemsep=2pt,parsep=0pt]
\setcounter{enumi}{3}
 \item Vanishing theorems for some of the coefficients $a^{(i)}_j$. 
 \end{enumerate}
 \vspace{0.1in}
 
 This is already clear in the case of Ap\'ery's proof for $\zeta(3)$. Indeed, the generic period integral on $\Mod_{0,6}$
 gives rise to linear forms in $1$, $\zeta(2)$ and $\zeta(3)$, and a naive attempt at constructing linear forms $I_N$ only gives back 
 a proof that one of the two numbers $\zeta(2)$ and $\zeta(3)$ is irrational. The entire difficulty is thus to find integrals $I_N$ for which
 the coefficient of $\zeta(2)$ always vanishes (without destroying properties $(1)-(3)$). The key insight of Ball and Rivoal's proof, likewise, is the use of very well-poised hypergeometric
 series to construct linear forms in odd zeta values, and \emph{odd zeta values only}.

The vanishing problem $(4)$ can be rephrased in terms of algebraic geometry, and more precisely, the   cohomology of moduli spaces. In principle, this part of the problem is purely combinatorial.
An integral of the form 
$(\ref{INdef})$ can be expressed as a period of a certain relative cohomology group first introduced in \cite{GM}
$$m (A,B) = H^{\ell} (\overline{\Mod}_{0,n} \backslash A , B \backslash (B \cap  A))$$
  where $A, B$ are boundary divisors on the Deligne-Mumford compactification $\overline{\Mod}_{0,n}$. The divisor $A$ is determined from the singularities of the integrand, and  $B$ contains the  boundary of the closure of the  domain of integration.
 It was shown  in \cite{GM} that $m(A,B)$ is a mixed Tate motive over $\Z$ (which, by \cite{BrMTZ}, gives 
 another proof that its periods are multiple zeta values). Its de Rham realisation  $m (A,B)_{dR}$ is  a finite-dimensional  $\Q$-vector space graded in even degrees,  and a naive  observation (theorem \ref{thmCohomVanish}) is that 
 $$\gr^W_{2k} m(A,B)_{dR} =0  \qquad \Rightarrow \qquad \hbox{ vanishing of coefficients } a^{(i)}_j \hbox{ in weight } k\ . $$
This gives a sufficient condition for all multiple zeta values of weight $k$ to disappear.\footnote{It is not a necessary condition:  there could be more subtle reasons for the vanishing
of coefficients $a^{(i)}_j$. For example, the action of a  group of symmetries on the $m(A,B)_{dR}$ together with 
representation-theoretic arguments might give more powerful vanishing criteria.
}
The dimensions of the graded weight pieces $\gr^W_{2k} m(A,B)_{dR}$ can be computed from the data of the divisors $A,B$, and so reduces to a (rather tricky) combinatorial problem. I expect that the recent work of Dupont  \cite{Du1, Du2} may shed light on how to understand the vanishing problem $(4)$ from this viewpoint.

As a final remark, an irreducible boundary divisor  $D$ occurs in  $A$ if and only if a certain  linear form $\ell_D$ in the exponents $a_i,b_j,c_{ij}$ of the integrand $(\ref{thmperiodsofmon})$  is negative. Thus  the  flow of information goes as follows:
\begin{multline} \label{philosophy}
\hbox{ linear inequalities } \ell_D \leq  0 \hbox{ in the exponents   of  }  (\ref{thmperiodsofmon}) \\
\To \hbox{ vanishing  of  certain components } \gr^W_{2k} m(A,B)_{dR} \\
\To  \hbox{ vanishing of coefficients } a^{(i)}_j \qquad \qquad 
\end{multline}
The challenge is to make this  philosophy work, or failing that, to show  that one cannot construct moduli space motives  $m(A,B)$
with arbitrary vanishing properties.
Motivated by $(\ref{philosophy})$,  I was only able to find a general method to  force the coefficients of sub-maximal weight $2\ell-2$ to vanish (corresponding to MZV's of weight $\ell-1$), via the following construction.

 \subsection{A variant of the dinner table problem  \cite{Dinner}}
  Suppose that we have $n$ guests for dinner, sitting at a round table. Since it could  be  boring to talk to the same person for the whole duration of the meal, the guests should be  permuted   after the main course in such a way that no-one is sitting next to someone they previously sat next to. We can represent the new seating arrangement (non-uniquely) by a permutation $\sigma$ on $\{1,\ldots, n\}$,
which we write as $(\sigma(1), \ldots, \sigma(n))$.
 The  number of dinner table arrangements was first computed by Poulet in 1919 \cite{Poulet}.

 \begin{figure}[h]
{\includegraphics[height=3cm]{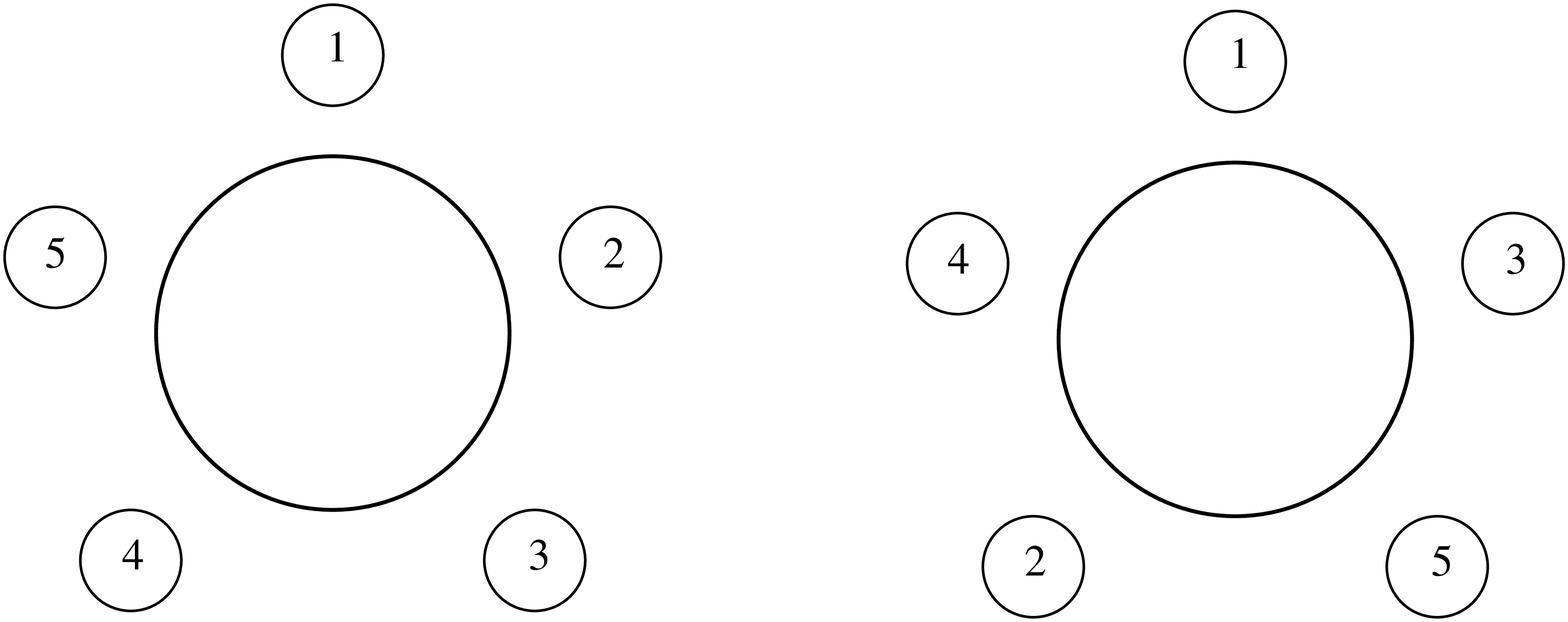}} 
\put(-174,37){\small $\delta^0$}
\put(-52,37){\small $\sigma \delta^0$}
\caption{The first solution for the classical dinner table problem is for $n=5$, and is unique up to symmetries. On the left is the original seating plan of guests;
on the right, the new arrangement after applying a permutation $\sigma$. No two neighbours are consecutive.}
\end{figure}

We need the following variant. Let $\delta^0$ denote the standard circular arrangement on $\{1,\ldots, n\}$ given by the integers modulo $n$ (the initial seating plan), 
and let $\sigma $ be any permutation on $\{1,\ldots, n\}$ (the new seating plan). Call a permutation $\sigma$  \emph{convergent} 
if no set of $k$ elements in $\{1,\ldots, n\}$ are simultaneously consecutive for $\delta^0$ and $\sigma \delta^0$, for all $2 \leq k \leq n-2$. 
For $n\leq 7$ this is equivalent to the classical dinner table problem but  for $n\geq 8$ this  imposes a genuinely new condition. The figure below illustrates a seating arrangement
 $\sigma= (2,4,1,3,6,8,5,7)$ which is a solution to the classical dinner table problem but fails our condition for $k=4$.

\begin{figure}[h]
{\includegraphics[height=3.5cm]{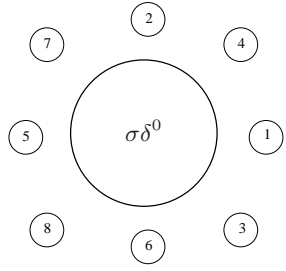}} 
\put(-60,47){\small $\sigma \delta^0$}
\caption{An arrangement  of 8 guests which is not convergent: the  guests $1,2,3,4$ (and $5,6,7,8$) are consecutive for both $\delta^0$ and $\sigma \delta^0$.}
\end{figure}

Now, given a  permutation $\sigma$ we associate a rational function and regular $n$-form
$$\widetilde{f}_{\sigma} = \prod_{i \in \Z/n\Z} { (z_i- z_{i+1}) \over (z_{\sigma(i)}-z_{\sigma(i+1)})} \qquad \hbox{ and  } \qquad 
\widetilde{\omega}_{\sigma} = { dz_1 \ldots dz_{n} \over \prod_{i \in \Z/n\Z} (z_{\sigma(i)}-z_{\sigma(i+1)})}
 $$ 
 on the space $\mathcal{C}^n=\{(z_1,\ldots, z_n) \in (\Pro^1)^n: z_i \neq z_j\}$ of configurations of $n$ distinct points in $\Pro^1$. 
They are defined up to an overall sign, which plays almost no role and shall be ignored.  They are both  $\mathrm{PGL}_2$-invariant. The former descends  to a rational function  $f_{\sigma}$ on  $\Mod_{0,n}  =\mathrm{PGL}_2 \backslash  \mathcal{C}^n$,
the latter, after  dividing by an invariant volume form 
on $\mathrm{PGL}_2$,  descends to a  regular $\ell$-form 
$\omega_{\sigma}$ on $\Mod_{0,n}$. 

Define the \emph{basic cellular integral} to be
\begin{equation} \label{IntroBasic}
I_{\sigma}(N) = \int_{\Dom_n} f_{\sigma}^N \omega_{\sigma}
\end{equation}
It converges if and only if $\sigma$ is a convergent permutation, as defined above.
 This integral can be written in the form $(\ref{IntM0n})$ by substituting  $(0, t_1, \ldots, t_{\ell}, 1,\infty)$ for  $(z_1,\ldots, z_n)$ 
 and formally omitting $dz_1 dz_{n-1} dz_n$ and all factors equal to $\infty$. 
 
 Clearly, the definition of the forms $\widetilde{f}_{\sigma}, \widetilde{\omega}_{\sigma}$, and hence $f_{\sigma}, \omega_{\sigma}$ only
 depend on the dihedral ordering defined by the permutation $\sigma$. Furthermore,  the  domain of integration in the integrals $(\ref{IntroBasic})$ admits a second dihedral symmetry of order $2n$, and these two dihedral symmetry groups define an equivalence relation on the set of permutations $\sigma$. Two equivalent permutations 
 give rise to the same family of integrals, and we call the equivalence class a \emph{configuration}. A list of convergent configurations for small $n$ is given in Appendix 2, together with the corresponding basic cellular integrals. As mentioned above, the basic cellular integrals reproduce
 Ap\'ery's theorems for $\zeta(2)$ and $\zeta(3)$ and a one-dimensional subfamily of the (two-parameter family) of  linear forms in odd zeta values used in the proof of Ball and Rivoal.

 A generalisation of this construction involves replacing $\widetilde{f}_{\sigma}^n$ with 
 $$\widetilde{f}_{\sigma}(\av,\bv) = \prod_{i \in \Z/N\Z} { (z_i- z_{i+1})^{a_{i,i+1} }\over (z_{\sigma(i)}-z_{\sigma(i+1)})^{b_{\sigma(i), \sigma(i+1)}}}  $$ 
 where $a_{i,i+1}$ and $b_{\sigma(i), \sigma(i+1)}$ are integers satisfying 
 $$   a_{\sigma_i -1 , \sigma_i } + a_{\sigma_i, \sigma_i + 1} = b_{\sigma_{i-1}, \sigma_i} + b_{\sigma_{i}, \sigma_{i+1}} $$
 for all indices $i$ modulo $\Z/n\Z$.  It descends again to a rational function $f_{\sigma}(\av,\bv)$ on $\Mod_{0,n}$ and we define an $n$-parameter family of integrals
 \begin{equation} \label{IntroGeneral}
I_{\sigma}(\av,\bv) = \int_{\Dom_n} f_{\sigma}(\av,\bv) \omega_{\sigma}
\end{equation}
 It converges under some  linear conditions on the indices $\av, \bv$,  and specialises to the basic cellular integrals if one sets all parameters
 equal to $n$. For $n=5,6$, this family of integrals reproduces precisely Rhin and Viola's integrals for $\zeta(2)$ and $\zeta(3)$, and
 hence gives the best irrationality measures for  these numbers which are presently known.\footnote{The record for $\zeta(2)$ has
 recently been broken by Zudilin \cite{Zu3}.} For other convergent configurations, it gives $n$-parameter generalisations of the Ball-Rivoal linear forms, and many new families which remain to be explored.

  \subsection{Contents} Section $\ref{sectMon}$ consists of reminders on moduli spaces $\Mod_{0,n}$ and basic facts about their geometry.
  Section $\ref{sectConfigs}$ defines  dinner table configurations, and establishes convergence properties for the corresponding cellular integrals.
  In section $\ref{sectPF}$ we show that the basic cellular integrals satisfy recurrence relations and study the effect of duality upon them.
  Section $\ref{sectGen}$ is concerned with properties of general cellular integrals, and section $\ref{sectMultStruct}$ 
studies a certain    multiplicative structure on cellular integrals coming from functorial maps between moduli spaces. 
  In section $\ref{sectOddZ}$, it is shown that a very specific family of configurations, after an appropriate change of variables, gives back 
  the linear forms in odd zeta values discovered by Ball and Rivoal. Section $\ref{sectCohom}$ discusses the vanishing problem $(4)$ from the cohomological point of view and  some more subtle   structures, such as Poincar\'e-Verdier duality, which are not obviously apparent from an inspection of integrals. 
  
  For the convenience of the reader, appendix $1$ gives a list of existing integrals from the literature which have led to the main diophantine results
  for zeta values. They nearly all
   arise as special cases of  generalised cellular integrals. Appendix $2$ tabulates some examples of basic cellular integrals in low degrees. Finally, appendix $3$ is devoted to a somewhat technical computation 
  of the motives underlying Apery's proofs of the irrationality of $\zeta(2)$ and $\zeta(3)$.

 \subsection{Outlook and related work}
 Whether the ideas in  this paper lead to new diophantine applications remains to be seen.  Insofar
 as it contains the linear forms  of \cite{B-R}, \cite{RV1}, \cite{RV2} as special cases,  it is fair to expect that it could lead to an improvement in quantitative
 diophantine results. The methods described  here also lead to new approximations to single odd zeta values such as $\zeta(5)$ (\S\ref{sectdualforms}) but it is unclear if they could lead to  an irrationality proof.   Much more optimistically still, one might  hope to prove the transcendence of $\zeta(3)$ by optimizing our polynomial forms in $\zeta(3)$ along the lines of \cite{Sorokin}.   Finally, it would be interesting to combine the geometric methods of this paper with the conditions on numerators studied  in  \cite{CFR, CFR2,Fi2, Zlob1} 
 to obtain  linear forms in antisymmetric multiple zeta values with odd arguments.
    There are connections with disparate subjects such as the theory of hypergeometric functions on the one hand,  and operads \cite{Alm} on the other, which remain to be explored.  It would also be interesting to  compare  our method with the quantum cohomology computations of \cite{Galkin, Golyshev}.
  \\
  
  \emph{ Acknowledgements}. This work was begun in 2004, and continued periodically during a stay at the Mittag-Leffler Institute in Stockholm in 2007,
  and finally written up at Humboldt University, Berlin and the Institute for Advanced Study, Princeton in 2014. I thank all three institutes for hospitality and financial support.
  The impetus to finish this project came from the recent implementations, due especially to  Erik Panzer, and independently,  Christian Bogner,
 for the algorithmic computation of Feynman integrals. Many thanks to Erik Panzer, Stephane Fischler, Clement Dupont  for comments and corrections, and Wadim Zudilin
 for many many helpful remarks.  This work was partially supported by ERC grant 257638 and NSF grant  DMS-1128155.
  
  \newpage
  
 \section{Moduli spaces $\Mod_{0,n}$: geometry and periods} \label{sectMon}

 \subsection{Coordinates}
Let  $n\geq 3$, and  let  $S$ denote a set with $n$ elements. Let
$\mathfrak{M}_{0,S}$ denote the moduli space of Riemann spheres
with $n$  points labelled with elements of $S$. If
$(\mathbb{P}^1)_*^S$ denotes the space of  $n$-tuples of distinct
points $z_s\in \Pro^1$, for $s\in S$, then
$$\mathfrak{M}_{0,S} =\mbox{PGL}_2 \backslash (\mathbb{P}^1)_*^S\
,$$
 where $\mbox{PGL}_2$ is the group of automorphisms of $\Pro^1$  acting  diagonally by
M\"{o}bius transformations. Throughout this paper,  we shall  set  
 \begin{equation} \label{elldef} \ell= n-3  \ .
 \end{equation} 
When $S=\{s_1,\ldots,
s_n\}$,  we often  write $i$ instead of $s_i$,
and $\Mod_{0,n}$ instead of $\Mod_{0,S}$. Since the
action of $\PGL_2$ on $\Pro^1$ is triply transitive, we
can place the coordinates $z_1$ at 0, $z_{n-1}$ at $1$, and $z_n$
at $\infty$ (note that this convention differs slightly from \cite{BrENS}). 

We define \emph{simplicial coordinates} $t_1,\ldots,
t_\ell$ on $\Mod_{0,n}$ to be:
\begin{equation} 
t_1=z_2\ ,\ \ldots \ ,  \ t_\ell=z_{\ell+1}\ \ .
\end{equation} 
The above argument shows that $\Mod_{0,n}$ is isomorphic to the complement of a  hyperplane arrangement
in affine space $\A^{\ell}$ of dimension $\ell$:
\begin{equation} \label{ExpSimpCoords}
\Mod_{0,n} \cong \{(t_1,\ldots, t_\ell) \in \A^\ell: \,
t_i\notin \{0,1\},\quad  t_i\neq t_j \hbox{ for all } i \neq j\}\
.\end{equation}
\emph{Cubical coordinates} $x_1,\ldots, x_{\ell}$ are defined by 
\begin{equation}\label{cubechangeofvars}
t_1=x_1\ldots x_\ell\ , \  t_2=x_2\ldots x_\ell\ , \  \ldots \ ,\
t_\ell =x_\ell\ . \end{equation} We can also identify
$\Mod_{0,n}$ with a complement of hyperbolae:
\begin{equation} \label{ExpCubeCoords}
\Mod_{0,n} \cong \{(x_1,\ldots, x_\ell) \in \A^\ell: \,
x_i\notin \{0,1\},\quad  x_i\ldots x_j\neq 1 \hbox{ for all } i <
j\}\ .\end{equation}

\subsection{Compactification} \label{sectCompact} There is a smooth projective compactification 
$\Mod_{0,S} \subset \overline{\Mod}_{0,S}$
 defined  by
Deligne, Mumford and Knudsen \cite{Knud} such that the complement
$\overline{\Mod}_{0,S} \backslash \Mod_{0,S}$ is a simple normal crossing
divisor.  A \emph{boundary divisor} $D$ is a union of irreducible components of $\overline{\Mod}_{0,S} \backslash \Mod_{0,S}$.  We  shall only require the following basic facts.

\begin{enumerate}
\item The irreducible boundary divisors are in one-to-one correspondence with \emph{stable partitions} $S= S_1 \cup S_2$, where $|S_1|, |S_2|\geq 2$ and $S_1 \cap S_2 = \emptyset$. They can be denoted by
$D_{S_1|S_2}$
 or simply $D_{S_1}$  (or $D_{S_2}$) when $S$ is clear from the context.
\item There is a canonical  isomorphism
$$D_{S_1 |S_2} \cong \overline{\Mod}_{0,S_1 \cup x} \times \overline{\Mod}_{0,S_2 \cup x}$$
which can be pictured as   a bouquet of two spheres joined at a point $x$,
with the points  $S_1$ lying in one of these spheres,  the points $S_2$ on the other.
\item Given two distinct stable partitions $S_1|S_2$ and $T_1|T_2$ of $S$, 
the divisors $D_{S_1|S_2}$  and $D_{T_1|T_2}$ have non-empty  intersection  if and only if
$$S_i \subseteq T_k \hbox{ and } T_{l} \subseteq S_j \qquad \hbox{ for some  } \{i,j\}=\{k,l\}= \{1,2\}\ .$$
\end{enumerate}
By taking repeated intersections of
boundary divisors one obtains a stratification on
$\overline{\Mod}_{0,S}$ by closed subschemes. The  irreducible strata of codimension $k$  are indexed by 
trees with $k$ internal edges and $|S|$ leaves labelled by every element of $S$. The large stratum $\overline{\Mod}_{0,S}$ is indexed by a corolla
with no internal edges, and divisors $D_{S_1|S_2}$ by two corollas with leaves labelled by  $S_1$ and $S_2$  respectively, joined along a single internal edge.
 The inclusion of strata corresponds to contracting internal edges on trees.

\subsection{Real components and dihedral structures} \label{sectRealCompandDihed}
The set of real points $\Mod_{0,S}(\R)$ is isomorphic to the space of configurations of 
$|S|$ distinct points on $\Pro^1(\R)$. Therefore the set of connected components is in one-to-one correspondence
$$\pi_0( \Mod_{0,S}(\R)) \quad \leftrightarrow \quad  \{\hbox{Dihedral structures on } S \}\ .$$
A dihedral structure   $\delta$ on $S$ is an equivalence class of cyclic orderings on $S$,
where a cyclic ordering is equivalent to its reversed ordering. If $S=\{1,\ldots, n\}$, we denote the  standard dihedral ordering $1<2<\ldots<n<1$ by
$\delta^0$.  In simplicial coordinates, the corresponding connected component is the open 
simplex
$$  \Dom_{\delta^0} = \{(t_1,\ldots, t_{\ell})\in \R^{\ell}\ : \   0 < t_1 < \ldots <  t_{\ell} <  1\}\ .$$ 
We say that an irreducible boundary divisor $D$ of  $\Mod_{0,S}$ is  \emph{at finite distance} with respect to  a dihedral
structure $\delta$ if $D(\C)$ meets the closure of $\Dom_{\delta^0}$ in $\overline{\Mod}_{0,S}(\C)$ in the analytic topology. 
Equivalently,  $D=D_{S_1|S_2}$ is at finite distance if and only if the elements of $S_1$ and $S_2$ are consecutive with respect to $\delta$.
Let $\delta_f$ denote the set of irreducible divisors at finite distance with respect to $\delta$.
If one depicts a dihedral structure $\delta$ as a set of points $S$ around a circle (up to reversing its orientation), then the set of divisors at finite distance correspond to chords in the circle which separate $S$ into the two subsets $S_1$ and $S_2$.
All remaining irreducible boundary divisors are said to be \emph{at infinite distance} with respect to $\delta$. 
Let $\delta_\infty$ denote the set of irreducible divisors at infinite distance with respect to $\delta$.

\begin{example} On $\Mod_{0,4}$,  $\delta^0_f= \{ D_{\{1,2\}| \{3,4\}} \ , 
 D_{\{2,3\} |  \{1,4\}} \} $  and $\delta^0_{\infty} = \{ D_{\{1,3\}| \{2,4\}} \}$. 
\end{example}

\subsection{Periods}

We shall consider periods of $\Mod_{0,S}$ of the form
\begin{equation} \label{GeneralI} 
I = \int_{\Dom_{\delta}} \omega
\end{equation} 
where $\omega \in \Omega^{\ell}(\Mod_{0,S};\Q)$ is a global regular $\ell$-form on $\Mod_{0,S}$. Such an integral can be written
as a $\Q$-linear combination of integrals $(\ref{IntM0n}).$ It converges if and only if the order of vanishing  along
all divisors $D$ at finite distance with respect to $\delta$ is non-negative:
\begin{equation}\label{convergenceasvD}
v_{D}(\omega)\geq 0 \qquad \hbox{ for all } D\in \delta_f . 
\end{equation}
The  \emph{singular locus} of $\omega$ is  defined to be the set of irreducible  divisors (necessarily boundary divisors) along which $\omega$ has a pole:
$$\mathrm{Sing}(\omega) = \{ D \hbox{ irred. s.t. }  v_{D}(\omega)<0\}$$
Then condition $(\ref{convergenceasvD})$ is equivalent to $\mathrm{Sing}(\omega) \subseteq \delta_{\infty}$.

The set of  permutations $\sigma$ of $S$ which preserve a dihedral structure $\delta$ is isomorphic to the dihedral group $D_{\delta}$ on $2|S|$ elements.
We have
\begin{equation}\label{dihedsymm}
  \int_{\Dom_{\delta}} \omega =  \int_{\Dom_{\delta}} \sigma^*(\omega) \qquad \hbox{ for all } \sigma \in D_{\delta}\ .
  \end{equation}

\begin{rem} It is sometimes convenient to write period integrals on $\Mod_{0,S}$ in terms of the dihedral `coordinates' $u_{c}$ indexed by chords in an $|S|$-gon, which were defined in \cite{BrENS}.
The  convergence condition $(\ref{convergenceasvD})$ and symmetry  $(\ref{dihedsymm})$ are obvious in these coordinates.
 \end{rem} 
 
Let $S=\{1,\ldots, n \}$.  In cubical coordinates, the domain $\Dom_{\delta^0}$ is isomorphic to the unit hypercube $[0,1]^{\ell}$, and 
a general period integral $(\ref{GeneralI})$ can be written in the form
\begin{equation}\label{GeneralCubicalInt}
\int_{[0,1]^{\ell}}  { P(x_1,\ldots, x_{\ell})  \over \prod_{1\leq i < j \leq \ell} (1 - x_i\ldots x_j)^{c_{ij}} }dx_1\ldots dx_{\ell}
\end{equation}
 where $c_{ij} \in \Z$ and $P$ is a polynomial with rational coefficients. All examples in Appendix $1$ are either of this form,
 or equivalent to it by a change of variables. 
 
 \subsection{Denominators} The algorithm of \cite{BrENS} for computing integrals $(\ref{IntM0n})$ by taking primitives
 in a bar complex is effective, and should lead to bounds on the denominators. The basic observation
 is that a differential form $P(x) dx$ where $P(x) \in \Z[x]$ is a polynomial of degree $n-1$, has a primitive 
 $$ \int P(x) dx  \in {1 \over d_n} \Z[x]$$
 where $d_n = \mathrm{lcm}(1,2,\ldots, n)$. The denominator is thus bounded by $d_n$, where $n$ is the order of the pole at infinity. An analysis
 of the steps in  \cite{BrENS}, working with $\Z$ coefficients, should lead to effective bounds on the denominators of the coefficients of the linear forms $\S\ref{sectStruct}$ in terms
 of the orders of the poles at infinity of the integrand.

 \section{Configurations and cellular integrals} \label{sectConfigs}
 
 \subsection{Convergent configurations}

 \begin{defn}
 A \emph{configuration} on a finite set  $S$ is an equivalence class $[\delta,\delta']$ of  pairs $(\delta, \delta')$ of dihedral structures on $S$
 modulo the equivalence relations 
 \begin{equation} \label{deltasequiv}
 (\delta, \delta') \sim (\sigma \delta, \sigma \delta')  \quad  \hbox{for  } \sigma \in \Sigma(S)\ .
 \end{equation} 
A pair of dihedral structures $(\delta, \delta')$   is \emph{convergent} if it  satisfies  (see \S\ref{sectRealCompandDihed}) \begin{equation}  \label{deltasdisjoint}
  \delta_f \cap \delta'_f = \emptyset\  .
  \end{equation}
  A configuration is convergent if it has  a convergent  representative $(\delta, \delta')$. 
  \end{defn}
 
 Let $\mathcal{C}_S$ denote the set of convergent configurations on $S$. We can view convergent configurations as pairs
 of connected components of $\Mod_{0,S}$ up to automorphisms
 \begin{equation} \label{embedCS}
 \mathcal{C}_S \quad \hookrightarrow   \quad \Sigma(S)  \backslash \big(\pi_0(\Mod_{0,S}(\R)) \times \pi_0(\Mod_{0,S}(\R)) \big) 
 \end{equation} 
 
  \begin{defn} The \emph{dual} of a  pair of dihedral structures  $(\delta, \delta')$ is 
  $$(\delta, \delta')^{\vee} =  (\delta', \delta)\ . $$ 
  It is well-defined on configurations, and  defines an involution $\vee: \mathcal{C}_S \rightarrow \mathcal{C}_S$.
 \end{defn}
 
In order to write down convergent configurations, it is convenient to identify $S$ with  $ \{1,\ldots, n\}$. 
A   pair of dihedral structures $(\delta, \delta')$ is equivalent to  $(\delta^0, \sigma \delta^0)$ where $\delta^0$ is the standard
dihedral ordering, where  $\sigma \in \Sigma(n)$ is a permutation  on $n$ letters. Define an equivalence relation  $\sigma \sim \sigma'$  on permutations if
$(\delta^0, \sigma \delta^0) \sim (\delta^0, \sigma' \delta^0)$, and denote the equivalence classes by $[\sigma]$.
The condition $(\ref{deltasdisjoint})$ is equivalent to the condition that no set 
of $k$ consecutive elements (where the indices are taken modulo $n$) 
$$\{\sigma_i, \sigma_{i+1}, \ldots, \sigma_{i+k}\}$$
is itself  a set of  consecutive integers modulo $n$, for all $2\leq k \leq n-2$. It does not depend on the choice of representative for $[\sigma]$.

The above  equivalence relation on permutations can be spelt out as follows.   Consider  the space of double cosets of bijections
 $$D_{2n} \backslash \mathrm{Bij(\{1,\ldots, n\}, \{1,\ldots, n\})}/D_{2n} $$
 where the  dihedral groups act  on the source and target respectively as symmetries of $\delta^0$.
A double coset is represented by  an equivalence class of permutations $(\sigma_1,\ldots, \sigma_n)$, where $\sigma \in \Sigma(n)$,  modulo the 
  group generated by    cyclic rotations
  \begin{eqnarray}
(\sigma_1,\ldots, \sigma_n)  &\sim & (\sigma_2,\ldots, \sigma_n,\sigma_1)  \nonumber   \\
  (\sigma_1,\ldots, \sigma_n) &\sim & (\sigma_1+1,\ldots, \sigma_n+1)   \nonumber
  \end{eqnarray} 
  where the entries are taken modulo $n$ in the second line,   and the reflections 
    \begin{eqnarray}
  (\sigma_1,\ldots, \sigma_n)& \sim &  (\sigma_n,\ldots, \sigma_1)  \nonumber  \\
  (\sigma_1,\ldots, \sigma_n) & \sim& (n+1-\sigma_1,\ldots, n+1- \sigma_n) \nonumber
    \end{eqnarray} 
Given such a class of permutations $\sigma$,  the pair of dihedral structures  $(\delta^0, \sigma \delta^0)$ is  well-defined modulo
the relations $(\ref{deltasequiv})$. 
 This establishes a bijection between configurations and equivalence classes of 
permutations. The dual of the  configuration corresponding to  $[\sigma]$ is the configuration represented by the inverse
permutation $[\sigma]^{\vee} = [\sigma^{-1}]$.

 \subsection{Cellular forms}
To any dihedral structure $\delta$ we associate the connected component $\Dom_{\delta}$ of $\Mod_{0,S}(\R)$.  This will serve as a domain of integration.
We can also associate a regular $\ell$-form as follows.  Let
 \begin{eqnarray}
  \widetilde{\omega}_{\delta}  =   \pm  \prod_{i\in \Z/n\Z} {dz_i \over z_{\delta(i)} - z_{\delta(i+1)} } \  \in \  \Omega^{\ell+3} ( (\Pro^1)^S_*;\Q) \ , \nonumber
 \end{eqnarray} 
 where the indices are taken modulo $n$.  
 Clearly  $ \widetilde{\omega}_{\delta}$ is  homogeneous of degree zero 
  and  is easily verified to be $\mathrm{PGL}_2$-invariant. Furthermore 
  $\sigma^* \widetilde{\omega}_{\delta} = \pm \widetilde{\omega}_{\sigma\delta}$ for any $\sigma \in \Sigma(S)$, which acts on $(\Pro^1)^S_*$
  by permuting the components. Let
  $$\pi:  (\Pro^1)^S_* \rightarrow \Mod_{0,S}$$
  be the natural map obtained by quotienting by   $\PGL_2$.  We  can divide  $\widetilde{\omega}$  by a  rational invariant volume form $v$  on $\PGL_2$ 
  to obtain  a differential form \cite{BCS}
  $$\omega_{\delta} \in \Omega^{\ell}(\mathcal{M}_{0,S};\Q)\ . $$
  It  satisfies $\pi^*(\omega_{\delta}) \wedge v = \widetilde{\omega}_{\delta}$ for any local trivialisation of $\pi$. If we  normalise  $v$  so that $\omega_{\delta}= \pm 1$, whenever $|S|=3$, then $\omega_{\delta}$ is unique up to a sign for all $\delta$.
 It follows that  \begin{equation} 
     \sigma^* \omega_{\delta} = \pm \omega_{\sigma(\delta)}    \quad   \hbox{ for all } \quad \sigma \in \Sigma(S)\ .   \end{equation} 
  
  \begin{rem} It is possible to fix all the signs by considering cyclic structures instead of dihedral structures, as was done in \cite{BCS}. The sign plays no role for us.
  \end{rem}

 \begin{lem}  \label{lempolesform}The form $\omega_{\delta}$ has a simple pole along every irreducible boundary divisor at finite  distance with respect to $\delta$, and no other poles.
 In other words, 
 $$\mathrm{Sing}(\omega_{\delta}) = \delta_f \ .$$
 \end{lem}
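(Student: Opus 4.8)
The plan is to compute, for every irreducible boundary divisor $D = D_{S_1|S_2}$ of $\overline{\Mod}_{0,S}$, the order of vanishing $v_{D}(\omega_{\delta})$, and to show it equals $-1$ precisely when $D \in \delta_f$ and is $\geq 0$ when $D \in \delta_\infty$. By the equivariance $\sigma^* \omega_{\delta^0} = \pm \omega_{\delta}$ with $\delta = \sigma \delta^0$, which carries $\delta^0_f$ onto $\delta_f$, the general case reduces to $\delta^0$; since the argument below is uniform in $\delta$, I simply keep $\delta$ arbitrary. Draw $S$ on a circle in the cyclic order $\delta$. The partition $S = S_1 \cup S_2$ cuts this circle into some number $m \geq 1$ of maximal arcs belonging to $S_1$, alternating with $m$ arcs belonging to $S_2$; in particular $m$ is symmetric in $S_1 \leftrightarrow S_2$. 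Recalling from \S\ref{sectRealCompandDihed} that $D \in \delta_f$ iff $S_1$ and $S_2$ are each consecutive for $\delta$, i.e. iff $m=1$, the key claim is
$$ v_{D}(\omega_{\delta}) = m - 2 . $$

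To prove the claim I would work on an affine chart of $\Mod_{0,S}$ adapted to $D$. Relabel so that the smaller part is $S_1$, set $k := |S_1|$, and fix the $\PGL_2$-action by sending three points of $S_2$ to $0,1,\infty$; this is possible once $|S_2| \geq 3$, which holds whenever $n \geq 5$ (the case $n=4$ being the Example of \S\ref{sectRealCompandDihed}, checked by hand). In this chart the remaining $z_s$ are coordinates and $\omega_{\delta}$ is exactly the rational form obtained from $\widetilde{\omega}_{\delta}$ by omitting the three differentials of the fixed points and the denominator factors involving $\infty$. Now approach $D$ by writing $z_s = p + \epsilon\, u_s$ for $s \in S_1$, with the affine normalisation $u_{s_1}=0$, $u_{s_2}=1$ for two chosen indices of $S_1$; then $(p,\epsilon,u_{s_3},\ldots,u_{s_k})$ together with the untouched $z_s$, $s\in S_2$, form a coordinate system near $D$, and $D = \{\epsilon = 0\}$ with $\epsilon$ a normal coordinate. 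A direct Jacobian computation gives $\bigwedge_{s\in S_1} dz_s = \pm\, \epsilon^{\,k-2}\, dp \wedge d\epsilon \wedge du_{s_3}\wedge\cdots\wedge du_{s_k}$, so the numerator of $\omega_{\delta}$ vanishes to order $k-2$ along $\{\epsilon=0\}$. In the denominator, a factor $z_{\delta(i)}-z_{\delta(i+1)}$ is $O(\epsilon)$ exactly when both $\delta(i),\delta(i+1)$ lie in $S_1$, i.e. for the within-$S_1$ adjacent pairs; an $S_1$-arc of length $a$ contributes $a-1$ of these, so their total number is $k-m$, while all other factors (within $S_2$, crossing, or dropped at $\infty$) are nonzero units at the generic point of $D$. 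Hence the denominator vanishes to order $k-m$, and $v_{D}(\omega_{\delta}) = (k-2)-(k-m) = m-2$.

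Finally, for $D \in \delta_f$ one has $m=1$, hence $v_{D}(\omega_{\delta}) = -1$: a pole of order exactly one (its residue is, up to sign, a product of cellular forms on the two factors of $D$, hence not identically zero). For $D \in \delta_\infty$ one has $m \geq 2$, hence $v_{D}(\omega_{\delta}) = m-2 \geq 0$ and $\omega_{\delta}$ is regular along $D$. As $D$ ranges over all irreducible boundary divisors this yields $\mathrm{Sing}(\omega_{\delta}) = \delta_f$, with only simple poles, as claimed. The main points requiring care are the two combinatorial counts — that $S_1$ and $S_2$ split the $\delta$-circle into the same number $m$ of arcs, and that the within-$S_1$ adjacent pairs number exactly $k-m$ — together with the verification that $\epsilon$ is a genuine normal coordinate to $D$ on $\Mod_{0,S}$ (so that the valuation computed in this chart is the intrinsic $v_D$) and that the non-collapsing denominator factors do not accidentally vanish on $D$.
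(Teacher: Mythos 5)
Your proof is correct, but it takes a genuinely different route from the paper, which offers no argument at all: the paper's proof of Lemma \ref{lempolesform} is a citation to proposition 2.7 of \cite{BCS}. Your local degeneration computation is self-contained and in fact proves more than the lemma asks. The intermediate claim $v_D(\omega_{\delta})=m-2$, with $m$ the common number of maximal $\delta$-arcs of $S_1$ and of $S_2$, is exactly equivalent to the valuation formula $(\ref{ordDomega})$, $\ord_D(\omega_{\sigma}) = \frac{\ell-1}{2}-\I_D(\sigma)$, which the paper also quotes from \cite{BCS} without proof: your arc count gives $\I_D(\delta)=\frac{1}{2}\bigl((k-m)+(n-k-m)\bigr)=\frac{n}{2}-m$, whence $\frac{\ell-1}{2}-\I_D(\delta)=\frac{n-4}{2}-\frac{n}{2}+m=m-2$. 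So your argument establishes Lemma \ref{lempolesform} and equation $(\ref{ordDomega})$ simultaneously, making this part of the paper independent of \cite{BCS}; the Jacobian identity $\bigwedge_{s\in S_1}dz_s = \pm\,\epsilon^{k-2}\,dp\wedge d\epsilon\wedge du_{s_3}\wedge\cdots\wedge du_{s_k}$ and both combinatorial counts are correct as written. The one step you rightly flag --- that $\epsilon$ is a genuine normal parameter, so the chart valuation computes the intrinsic $v_D$ --- closes cleanly as follows: a cross-ratio with two entries in $S_1$ and two in $S_2$, such as $\frac{(z_{s_1}-z_{s_2})(z_a-z_b)}{(z_{s_1}-z_a)(z_{s_2}-z_b)}$ with $a,b\in S_2$, is a local equation for $D_{S_1|S_2}$ near its generic point (this is the statement that the dihedral coordinates of \cite{BrENS}, \S 2, cut out the boundary divisors with multiplicity one), and in your chart it expands as $\frac{-\epsilon\,(z_a-z_b)}{(p-z_a)(p-z_b)}+O(\epsilon^2)$, i.e.\ $\epsilon$ times a unit. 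Since your chart map is dominant and restricts dominantly from $\{\epsilon=0\}$ to $D$, this says the ramification index along $\{\epsilon=0\}$ is $1$, so $v_D(\eta)=v_{\epsilon}(\hbox{pullback of } \eta)$ for any rational form $\eta$, which is what your computation uses. Finally, note that the residue remark at the end is not needed: $v_D(\omega_{\delta})=-1$ already means the pole has order exactly one.
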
 
 \begin{proof} This is  proposition 2.7 in \cite{BCS}.
 \end{proof} 
  When $S=\{1,\ldots, n\}$, the form $\omega_{\delta}$ can be written explicitly in simplicial coordinates  as follows. We can assume by dihedral symmetry that $\delta(n) = n$, in which case
   $$\omega_{\delta} =  \pm  { dt_1\ldots dt_{\ell} \over \prod_{i=1}^{n-2} ( t_{\delta(i)} - t_{\delta(i+1)} )}$$ 
where we write $t_0=0$ and $t_{n-1}=1$.

 \subsection{Basic cellular integrals}
 Given a pair of dihedral structures $(\delta, \delta')$  consider the 
  rational function  on $(\Pro^1)^*_S $ defined as follows:
 $$ \widetilde{f}_{\delta/ \delta'} =  \pm \prod_{i} {z_{\delta(i)} -z_{\delta(i+1)} \over z_{\delta'(i)} - z_{\delta'(i+1)} }   \ \in \ \Omega^0(   (\Pro^1)^*_S  ;\Q ) \ .$$
 It is  $\PGL_2$-invariant. It therefore descends to  a  rational function 
 $$ \pm f_{\delta/\delta'}  \ \in \  \Omega^0(   \Mod_{0,S}  ;\Q ) \ .$$
 Duality corresponds to inversion:
 \begin{equation}
  f_{\delta/ \delta'} =  \pm  \big(f_{\delta'/ \delta}\big)^{-1}  \ ,
 \end{equation} 
and furthermore,
\begin{equation} \label{fdrelatestwoomegas}
 f_{\delta/\delta'} \, \omega_{\delta} = \pm  \omega_{\delta'}\ .
\end{equation} 
 
  \begin{defn} For all $N\geq 0$, and any pair of dihedral structures $(\delta, \delta')$ define the family of \emph{basic cellular  integrals} to be
 \begin{equation} \label{INdefn}
 I_{\delta/\delta'}(N) = \Big| \int_{\Dom_{\delta}} \big( f_{\delta/\delta'} \big)^N \omega_{\delta'} \Big|
 \end{equation}
 It may or may not be finite.   \end{defn}
The numbers $I_{\delta/\delta'}(0)$ are essentially the cell-zeta values studied in \cite{BCS}.

\begin{lem} \label{lemconv} The integral $I_{\delta/\delta'}(N)$ is finite if and only if $(\delta, \delta')$ satisfy $\delta_f \cap \delta_f'=\emptyset$.
\end{lem}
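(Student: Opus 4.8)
The plan is to reduce the question to the convergence criterion (\ref{convergenceasvD}) and then read off the answer from the pole data furnished by Lemma \ref{lempolesform}. First I would record that $f_{\delta/\delta'}$ is a unit on $\Mod_{0,S}$ itself: being a product of factors $z_i-z_j$ and their inverses, it is regular and nowhere vanishing on the locus where all marked points are distinct. Hence the integrand $\eta_N := (f_{\delta/\delta'})^N \omega_{\delta'}$ has no singularities in the open interior $\Dom_{\delta}$, and every zero or pole of $\eta_N$ lies along a boundary divisor of $\overline{\Mod}_{0,S}$. Since the closure of $\Dom_{\delta}$ meets only the divisors at finite distance $\delta_f$, the criterion (\ref{convergenceasvD}) applies verbatim to this meromorphic form: $I_{\delta/\delta'}(N)$ is finite if and only if $v_D(\eta_N) \geq 0$ for every $D \in \delta_f$.

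It then remains to compute these orders of vanishing along $D \in \delta_f$. From the identity (\ref{fdrelatestwoomegas}) one gets $v_D(f_{\delta/\delta'}) = v_D(\omega_{\delta'}) - v_D(\omega_{\delta})$, and by Lemma \ref{lempolesform} (applied to $\delta$) every $D \in \delta_f$ is a simple pole of $\omega_{\delta}$, so $v_D(\omega_{\delta}) = -1$. The same lemma applied to $\delta'$ gives $v_D(\omega_{\delta'}) = -1$ if $D \in \delta'_f$ and $v_D(\omega_{\delta'}) = 0$ otherwise. Combining these yields $v_D(f_{\delta/\delta'}) = v_D(\omega_{\delta'}) + 1$, whence
$$ v_D(\eta_N) = N\, v_D(f_{\delta/\delta'}) + v_D(\omega_{\delta'}) = (N+1)\, v_D(\omega_{\delta'}) + N . $$
The case analysis is now immediate. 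If $D \in \delta_f \cap \delta'_f$ then $v_D(\eta_N) = -(N+1) + N = -1 < 0$, so the integral diverges; if $D \in \delta_f$ but $D \notin \delta'_f$ then $v_D(\eta_N) = N \geq 0$, which is harmless. Thus $v_D(\eta_N) \geq 0$ holds for all $D \in \delta_f$ precisely when no $D \in \delta_f$ also lies in $\delta'_f$, that is, when $\delta_f \cap \delta'_f = \emptyset$, which is the asserted criterion.

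The computation itself is elementary once Lemmas \ref{lempolesform} and the relation (\ref{fdrelatestwoomegas}) are in hand, so the only point requiring genuine care is the first step: one must justify that the convergence criterion (\ref{convergenceasvD}), stated for globally regular forms, continues to govern the integral of the \emph{meromorphic} form $\eta_N$. This is where the observation that $f_{\delta/\delta'}$ is invertible on $\Mod_{0,S}$ does the work, ruling out any divergence from the interior of $\Dom_{\delta}$ and localising the entire question to the boundary divisors in $\delta_f$. I expect this to be the main (though modest) obstacle; everything else is bookkeeping with valuations.
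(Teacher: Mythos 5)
Your proof is correct, but it takes a different route from the paper's at the key step, namely the computation of $\ord_D f_{\delta/\delta'}$ for $D \in \delta_f$. The paper computes this valuation combinatorially: it establishes the formula $\ord_D f_{\delta/\delta'} = \I_D(\delta) - \I_D(\delta')$ (equation (\ref{orderoff}), via the cross-ratio valuation formula from \cite{BrENS}), proves the bound $\I_D(\sigma) \leq \tfrac{n}{2}-1$ with equality exactly when $D \in \sigma_f$ (lemma \ref{lemmaxn-2}), and deduces corollary \ref{cor3.9}; lemma \ref{lempolesform} is then invoked only for the $\omega_{\delta'}$ factor. You instead bypass the $\I_D$ machinery entirely: from the identity $f_{\delta/\delta'}\,\omega_{\delta} = \pm\,\omega_{\delta'}$ (equation (\ref{fdrelatestwoomegas})) you get $v_D(f_{\delta/\delta'}) = v_D(\omega_{\delta'}) - v_D(\omega_{\delta})$, and then two applications of lemma \ref{lempolesform} (to $\delta$ and to $\delta'$) settle everything. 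Your argument is shorter and more self-contained for this lemma; what the paper's combinatorial route buys is quantitative information that is reused later (e.g.\ corollary \ref{cor3.9} and the exact valuation formulas (\ref{ordDineq}), (\ref{ordDomega}) are needed in the Picard--Fuchs section and for the generalised cellular integrals), which your identity-based shortcut does not provide. Also, your worry about extending (\ref{convergenceasvD}) to a ``meromorphic'' integrand is a non-issue: $f_{\delta/\delta'}^N\,\omega_{\delta'}$ is a global regular form on the open space $\Mod_{0,S}$, which is exactly the setting of (\ref{convergenceasvD}), and the paper applies that criterion to the same integrand without further comment.

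One small inaccuracy to fix: lemma \ref{lempolesform} asserts that $\omega_{\delta'}$ has \emph{no pole} along $D \notin \delta'_f$, i.e.\ $v_D(\omega_{\delta'}) \geq 0$, not $v_D(\omega_{\delta'}) = 0$ as you claim; indeed $\omega_{\delta'}$ can vanish along such divisors (by (\ref{ordDomega}), e.g.\ along $D_{\{1,3,5\}|\{2,4,6\}}$ on $\overline{\Mod}_{0,6}$ for the standard ordering). This is harmless for your argument: in the case $D \in \delta_f \setminus \delta'_f$ the correct reading gives $v_D\bigl(f_{\delta/\delta'}^N\,\omega_{\delta'}\bigr) = (N+1)\,v_D(\omega_{\delta'}) + N \geq N \geq 0$, and the divergent case $D \in \delta_f \cap \delta'_f$ uses only the exact simple pole $v_D(\omega_{\delta'}) = -1$, so the conclusion stands.
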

\begin{proof} See \S\ref{sectProofconv}. \end{proof} 
Since $\Dom_{\delta}$, $\omega_{\delta'}$,  and $  f_{\delta/\delta'} $ are $\Sigma(S)$-equivariant (up to orientation and sign) we have
$$I_{\delta/\delta'}(N) = I_{\sigma \delta/ \sigma\delta'}(N) \quad \hbox{ for all } \sigma \in \Sigma(S)\ .$$
In particular, we obtain a well-defined map
\begin{eqnarray} 
 \mathcal{C}_S \times \N  & \To & \R^{\geq 0 }   \nonumber \\
 ([\delta, \delta'] , N) & \mapsto &I_{\delta/\delta'}(N)  \ .\nonumber
 \end{eqnarray}
 
 \begin{rem} Let $\delta$ be a dihedral structure with $\delta(n)=n$. Then the function $f_{\delta_0/\delta}$ can be written explicitly in simplicial coordinates
 $$f_{\delta_0/\delta} =  \pm {t_1 (t_2-t_1) \cdots (t_{\ell}-t_{\ell-1})(1- t_\ell)  \over \prod_{i=1}^{n-2} t_{\delta(i)-\delta(i+1)}} $$
 where $t_0=0$ and $t_{n-1}= 1$.
   In particular, it has no zeros or poles on the  open standard simplex $\Dom_{\ell}$, and is therefore either positive or negative definite on $\Dom_{\ell}$.
\end{rem} 

Notice that, in the case when $(\delta, \delta')$ is convergent,  we have
\begin{equation} \label{maxformula} 
I_{\delta/\delta'}(N) \leq \big( \max_{t\in \overline{\Dom}_{\ell}} | f_{\delta/\delta'}(t) | \big)^N I_{\delta/\delta'}(0)\ .
\end{equation}
One can write $f_{\delta/\delta'}$  as a product of dihedral coordinates \cite{BrCras}, \cite{BrENS}    which take  value in $[0,1]$ on $\overline{\Dom}_{\ell}$. This
 immediately implies  that the maximum  of $|f_{\delta/\delta'}(t)|$ on $\overline{\Dom}_{\ell}$ is strictly less than $1$. 
 One can certainly obtain much sharper bounds.
 
When $\sigma \in \Sigma(n)$ is a permutation,  we shall sometimes write 
 $$f_{\sigma}  \ \hbox{ for }\  f_{\delta^0/\sigma\delta^0} \quad \hbox{ and } \quad \omega_{\sigma}  \ \hbox{ for } \  \omega_{\sigma\delta^0} \ .$$
 
\subsection{Proof of convergence} \label{sectProofconv}
We wish to compute the order of vanishing of $f_{\delta/\delta'}$ along an irreducible boundary divisor $D$ of $\Mod_{0,S}$. 
For this, define
$$\I_D(i,j) = {1 \over 2}\big( \I(\{i,j\} \subset S_1 ) + \I(\{i,j\} \subset S_2)\big)$$
for any $i,j \in S$ and where $D$ corresponds to the stable partition $S_1\cup S_2$ of $S$. The symbol $\I$ on the right-hand
side denotes the indicator function: $\I(A\subset B)$ is $1$ if $A$ is a subset of $B$, and $0$ otherwise. In \cite{BrENS}, corollary 2.36,  it was shown that
$$\ord_D { (z_i-z_j)(z_k-z_l) \over (z_i-z_k)(z_j-z_l)} = \I_D(i,j) + \I_D(k,l) - \I_D(i,k)- \I_D(j,l)$$
where the cross-ratio on the left is viewed in $\Omega^0(\Mod_{0,S};\Q)$. If we define
$$\I_{D} (\sigma) = \sum_{i \in \Z/n\Z} \I_D(\{\sigma(i), \sigma(i+1)\})   \quad \in \quad  {1\over 2} \,\N\ ,$$
then it follows from the definition of $f_{\delta/\delta'}$ that
\begin{equation} \label{orderoff}
\ord_D f_{\delta/\delta'} = \I_D(\delta) - \I_D(\delta')\ .
\end{equation}

\begin{lem} \label{lemmaxn-2} Let $\sigma$ be a dihedral ordering on $S$. Then $\I_{D} (\sigma)\leq {n \over 2}-1$
with equality if and only if $D\in \sigma_f$, i.e., $D$ is at finite distance with respect to $\sigma$.
\end{lem}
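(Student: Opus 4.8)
The plan is to reinterpret $\I_D(\sigma)$ as a purely combinatorial count attached to a two-colouring of a cycle. Writing the stable partition associated to $D$ as $S = S_1 \cup S_2$, I would colour each element of $S$ black or white according to which part it lies in. By definition $\I_D(\{i,j\})$ equals $\tfrac12$ precisely when $i$ and $j$ lie in the same part (both in $S_1$, or both in $S_2$, these being disjoint) and equals $0$ otherwise. Hence, reading off the cyclic sequence $\sigma(1), \sigma(2), \ldots, \sigma(n), \sigma(1)$, one gets $\I_D(\sigma) = \tfrac12\, m$, where $m$ is the number of the $n$ cyclically consecutive pairs $\{\sigma(i),\sigma(i+1)\}$ that are monochromatic, i.e.\ join two elements of the same part.

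Next I would run a parity argument on the complementary count. Let $c$ denote the number of \emph{colour changes}, i.e.\ consecutive pairs $\{\sigma(i),\sigma(i+1)\}$ with one endpoint in $S_1$ and the other in $S_2$, so that $m + c = n$. Traversing the cycle once returns to the starting colour, which forces $c$ to be even; and since a stable partition has $|S_1|, |S_2| \geq 2$ (in particular both parts are non-empty), the colouring is non-constant, so at least one change occurs and therefore $c \geq 2$. This immediately gives $m = n - c \leq n-2$, whence
\begin{equation}
\I_D(\sigma) = \tfrac12\, m \leq \tfrac12 (n-2) = \tfrac{n}{2} - 1 \ ,
\end{equation}
which is the desired inequality.

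Finally I would treat the equality case. Equality holds if and only if $c = 2$, i.e.\ the cyclic sequence $\sigma(1), \ldots, \sigma(n)$ breaks into exactly one contiguous black arc and one contiguous white arc; equivalently, the elements of $S_1$ are consecutive with respect to $\sigma$ (and then so are those of $S_2$). By the characterisation of divisors at finite distance recalled in \S\ref{sectRealCompandDihed}, this is exactly the condition that $D = D_{S_1|S_2}$ lies in $\sigma_f$. I do not expect a genuine obstacle here: the entire argument reduces to the correct colouring reinterpretation together with the evenness of the number of colour changes on a cycle, and the equality statement then follows by simply quoting the finite-distance criterion. The only point requiring a little care is ensuring the non-triviality $|S_1|, |S_2| \geq 2$ is invoked to guarantee $c \geq 2$ rather than merely $c \geq 0$.
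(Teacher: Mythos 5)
Your proof is correct. It rests on the same initial reformulation as the paper's proof --- namely that, since $S_1\cap S_2=\emptyset$, each $\I_D(\{\sigma(i),\sigma(i+1)\})$ is $\tfrac12$ or $0$ according as the pair is monochromatic or not, so $\I_D(\sigma)=\tfrac12 m$ with $m$ the number of monochromatic cyclic adjacencies --- but the finishing argument differs. The paper bounds the two parts separately: the adjacencies internal to $S_k$ form a disjoint union of arcs of the cycle, and since $S_k\subsetneq S$ there are at most $|S_k|-1$ of them, with equality if and only if $S_k$ is a single arc; summing over $k=1,2$ gives $\I_D(\sigma)\le \tfrac12(|S_1|+|S_2|-2)=\tfrac n2-1$. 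You instead bound the complementary count $c$ of bichromatic adjacencies from below, using that $c$ is even on a cycle and nonzero because the colouring is non-constant, so $c\ge 2$ and $m=n-c\le n-2$. The two mechanisms are equivalent in content: $c$ is exactly twice the number of arcs into which $S_1$ decomposes, so $c\ge 2$ is the same fact as ``each part has at least one break,'' and both arguments identify the equality case with the configuration of one arc of each colour, which is the finite-distance criterion of \S\ref{sectRealCompandDihed}. Your parity phrasing handles the equality case in one stroke ($c=2$ forces exactly one arc of each colour), while the paper's per-part bound records the slightly finer information that each $S_k$ individually contributes at most $|S_k|-1$ adjacencies; for the later applications (corollary \ref{cor3.9}, proposition \ref{prop4.2}) either form suffices, since only the total count $m$ is used.
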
 
\begin{proof}
Let $D=D_{S_1 |S_2}$. Since $S_1 \subsetneq \{1, \ldots, n\}$, the  sum $ \sum_{i} \I(\{\sigma(i), \sigma(i+1)\}\subseteq S_1)$ is bounded above by $|S_1|-1$ and attains the maximum
if and only if the elements of $S_1$ are consecutive with respect to $\sigma$. Likewise for $ \sum_{i} \I(\{\sigma(i), \sigma(i+1)\}\subseteq S_2)$. Therefore
$\I_D(\sigma)\leq {1\over 2 } (|S_1|+|S_2|-2) = {n \over 2} -1$ with equality if and only if $D \in \sigma_f$.
\end{proof}

\begin{cor} \label{cor3.9} Let $\delta, \delta'$ be two dihedral structures. Then 
$$\ord_D f_{\delta/\delta'}> 0 \  \hbox{ for all  } \  D \in \delta_f \qquad  \Longleftrightarrow \qquad \delta_f \cap \delta'_f = \emptyset
$$
  \end{cor}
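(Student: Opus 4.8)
The plan is to derive Corollary \ref{cor3.9} directly from the order formula (\ref{orderoff}) together with the sharp bound established in Lemma \ref{lemmaxn-2}. The key observation is that for a divisor $D \in \delta_f$ at finite distance with respect to $\delta$, Lemma \ref{lemmaxn-2} gives the exact value $\I_D(\delta) = \frac{n}{2}-1$, while for \emph{any} dihedral structure the same lemma caps $\I_D(\delta')$ at $\frac{n}{2}-1$, with equality precisely when $D \in \delta'_f$. Combining these via (\ref{orderoff}), for $D \in \delta_f$ we obtain
\begin{equation*}
\ord_D f_{\delta/\delta'} = \I_D(\delta) - \I_D(\delta') = \left(\tfrac{n}{2}-1\right) - \I_D(\delta'),
\end{equation*}
so that $\ord_D f_{\delta/\delta'} \geq 0$, with strict inequality if and only if $\I_D(\delta') < \frac{n}{2}-1$, i.e.\ if and only if $D \notin \delta'_f$.

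The two directions of the equivalence then fall out cleanly. For the implication $(\Leftarrow)$, assume $\delta_f \cap \delta'_f = \emptyset$. Then any $D \in \delta_f$ satisfies $D \notin \delta'_f$, so by the displayed computation $\I_D(\delta') < \frac{n}{2}-1$ and hence $\ord_D f_{\delta/\delta'} > 0$ strictly, as required. For the converse $(\Rightarrow)$, I would argue by contraposition: suppose $\delta_f \cap \delta'_f \neq \emptyset$ and pick a divisor $D$ lying in the intersection. Since $D \in \delta_f$, the first application of Lemma \ref{lemmaxn-2} forces $\I_D(\delta) = \frac{n}{2}-1$; since $D \in \delta'_f$, the second application forces $\I_D(\delta') = \frac{n}{2}-1$ as well. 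Therefore $\ord_D f_{\delta/\delta'} = 0$ for this particular $D \in \delta_f$, which violates the left-hand condition that $\ord_D f_{\delta/\delta'} > 0$ for \emph{all} $D \in \delta_f$. This completes both directions.

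There is no serious obstacle here: the entire content has been front-loaded into Lemma \ref{lemmaxn-2}, whose sharp upper bound $\frac{n}{2}-1$ with its precise equality condition is exactly the lever that makes the corollary immediate. The only point demanding a moment of care is recording that $\I_D(\delta) = \frac{n}{2}-1$ is \emph{forced} (not merely bounded) once $D \in \delta_f$, so that the sign of $\ord_D f_{\delta/\delta'}$ is genuinely controlled by whether $\I_D(\delta')$ attains its maximum. One should also note in passing that this corollary, combined with the convergence criterion $\mathrm{Sing}(\omega) \subseteq \delta_\infty$ recorded in (\ref{convergenceasvD}), is precisely what will supply the proof of Lemma \ref{lemconv}: the condition $\ord_D f_{\delta/\delta'} > 0$ on $\delta_f$ guarantees that high powers $f_{\delta/\delta'}^N$ kill the simple poles of $\omega_{\delta'}$ along $\delta_f$ (by Lemma \ref{lempolesform}), rendering the integrand finite on the closure of $\Dom_\delta$.
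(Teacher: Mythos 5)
Your proof is correct and follows exactly the paper's own argument: apply the order formula $\ord_D f_{\delta/\delta'} = \I_D(\delta) - \I_D(\delta')$ together with Lemma \ref{lemmaxn-2}, which forces $\I_D(\delta) = \tfrac{n}{2}-1$ for $D \in \delta_f$ and shows that strict positivity holds precisely when $D \notin \delta'_f$. The paper compresses both directions of the equivalence into the single statement that $(\tfrac{n}{2}-1) - \I_D(\delta')$ is strictly positive if and only if $D \notin \delta'_f$; your explicit two-direction write-up is the same argument spelled out.
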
 
 
 \begin{proof}  Let $D \in \delta_f$. By the previous lemma
  \begin{equation} \label{ordDineq}
  \ord_D(f_{\delta/\delta'}) = \I_{D}(\delta) - \I_{D}(\delta') = {n\over 2}-1 - \I_{D}(\delta')  
  \end{equation} 
 which is strictly positive if and only if $D \notin \delta'_f$.
 \end{proof}
We can now prove lemma $\ref{lemconv}$. Let $\delta, \delta'$ be two dihedral structures. Then by $(\ref{convergenceasvD})$, the integral $(\ref{INdefn})$ converges if and only if
\begin{equation}  \label{convineq} 
 \ord_D (f^N_{\delta/\delta'} \omega_{\delta'}) \geq 0 \qquad \hbox{ for all } D \in \delta_f\ . 
 \end{equation} 
Now, if $\delta_f \cap \delta'_f = \emptyset$, then lemma $\ref{lempolesform}$ implies that $\ord_D(\omega_{\delta'}) =0$, and the previous corollary
implies that  $\ord_D (f^N_{\delta/\delta'})\geq N$ for all $D \in \delta_f$. Therefore  $(\ref{convineq})$ holds.
On the other hand, if $D\in \delta_f \cap \delta'_f$ then $\ord_D(\omega_{\delta'}) =-1$ by lemma $\ref{lempolesform}$ and $\ord_D (f^N_{\delta/\delta'})\leq 0$
by $(\ref{ordDineq})$.  Therefore $(\ref{convineq})$ fails for this divisor $D$.

\begin{rem} The above argument shows that when the integral $I^{(\delta, \delta')}_N$  converges, its integrand vanishes to order at least $N$ along every boundary component of the compactification of the domain of integration:
$$  \ord_D (f^N_{\delta/\delta'} \omega_{\delta'})\geq N \qquad \hbox{for all } D \in \delta_f$$ 
 This  explains why it decays rapidly as $N\rightarrow \infty$.
\end{rem} 
The following lemma (which implies lemma \ref{lempolesform}) is stated here for later use.
\begin{lem} For any dihedral structure $\sigma$  and irreducible boundary divisor $D$
\begin{equation} \label{ordDomega}
\ord_D(\omega_{\sigma}) = {\ell-1\over2} -   \I_D(\sigma) \ .\end{equation} 
\end{lem}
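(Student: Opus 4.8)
The plan is to compute the order of vanishing of $\omega_\sigma$ along an arbitrary irreducible boundary divisor $D$ by exploiting the relation $(\ref{fdrelatestwoomegas})$, namely $f_{\delta/\delta'}\,\omega_\delta = \pm\,\omega_{\delta'}$, which converts a statement about a single form into a statement relating two forms via a rational function whose order we already control through $(\ref{orderoff})$. Taking orders on both sides of $(\ref{fdrelatestwoomegas})$ gives
\begin{equation}\label{planorderrel}
\ord_D(\omega_{\delta'}) = \ord_D(f_{\delta/\delta'}) + \ord_D(\omega_\delta) = \I_D(\delta) - \I_D(\delta') + \ord_D(\omega_\delta).
\end{equation}
The key observation is that the claimed formula $(\ref{ordDomega})$, $\ord_D(\omega_\sigma) = \tfrac{\ell-1}{2} - \I_D(\sigma)$, is \emph{compatible} with $(\ref{planorderrel})$: substituting the proposed formula for both $\omega_{\delta'}$ and $\omega_\delta$ makes the $\I_D(\delta)$ and $\I_D(\delta')$ terms cancel consistently. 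So the content of $(\ref{planorderrel})$ is that the quantity $\ord_D(\omega_\sigma) + \I_D(\sigma)$ is \emph{independent of the dihedral structure} $\sigma$; it remains to identify this universal constant and show it equals $\tfrac{\ell-1}{2}$.

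First I would establish this independence rigorously: for any two dihedral structures $\delta, \delta'$, equation $(\ref{planorderrel})$ rearranges to $\ord_D(\omega_{\delta'}) + \I_D(\delta') = \ord_D(\omega_\delta) + \I_D(\delta)$, so $c_D := \ord_D(\omega_\sigma) + \I_D(\sigma)$ depends only on $D$. Next I would pin down $c_D$ by evaluating it for a conveniently chosen $\sigma$. The natural choice is to take $\sigma$ with $D \in \sigma_f$, i.e. $D$ at finite distance with respect to $\sigma$; such a $\sigma$ always exists since every stable partition $S_1 \cup S_2$ can be realized as consecutive blocks in some cyclic ordering. For that choice, Lemma $\ref{lempolesform}$ gives $\ord_D(\omega_\sigma) = -1$ (simple pole), while Lemma $\ref{lemmaxn-2}$ gives $\I_D(\sigma) = \tfrac{n}{2} - 1$. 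Hence
\begin{equation}\label{plancomputeconst}
c_D = -1 + \Big(\tfrac{n}{2} - 1\Big) = \tfrac{n}{2} - 2 = \tfrac{(\ell+3)}{2} - 2 = \tfrac{\ell-1}{2},
\end{equation}
using $\ell = n-3$ from $(\ref{elldef})$. Substituting $c_D = \tfrac{\ell-1}{2}$ back into the definition $c_D = \ord_D(\omega_\sigma) + \I_D(\sigma)$ yields exactly $(\ref{ordDomega})$ for every $\sigma$ and every $D$.

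The one point requiring care — and the main obstacle, modest though it is — is the existence, for each irreducible boundary divisor $D = D_{S_1|S_2}$, of a dihedral structure $\sigma$ with $D \in \sigma_f$, so that the normalization step $(\ref{plancomputeconst})$ applies. This is immediate from the combinatorial description of finite-distance divisors in \S\ref{sectRealCompandDihed}: one simply orders the elements of $S_1$ consecutively followed by the elements of $S_2$ consecutively around the circle, which places both blocks as consecutive runs and hence puts $D$ at finite distance. I would also double-check the half-integrality: $\I_D(\sigma) \in \tfrac{1}{2}\N$ in general, but the values entering $(\ref{plancomputeconst})$ are $\tfrac{n}{2}-1$ and the output $\tfrac{\ell-1}{2}$, both of which have the correct parity as $n$ varies, so no inconsistency arises. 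The remark in the excerpt that $(\ref{ordDomega})$ implies Lemma $\ref{lempolesform}$ is then a consistency check on the sign conventions rather than a circularity, since the present derivation uses $(\ref{fdrelatestwoomegas})$ and $(\ref{orderoff})$ as its only inputs beyond the two lemmas already proved.
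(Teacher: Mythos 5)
Your proof is correct, but it takes a genuinely different route from the paper: the paper disposes of this lemma with a bare citation to \cite{BCS}, equation $(2.7)$, whereas you give a self-contained bootstrap argument from facts already established in the text. Your two steps are sound: the relation $f_{\delta/\delta'}\,\omega_{\delta} = \pm\,\omega_{\delta'}$ of $(\ref{fdrelatestwoomegas})$ together with $(\ref{orderoff})$ shows that $c_D = \ord_D(\omega_{\sigma}) + \I_D(\sigma)$ is independent of $\sigma$, and the normalization at a dihedral structure $\sigma$ with $D \in \sigma_f$ (which always exists, by placing the two blocks of the stable partition consecutively) pins down $c_D = -1 + \left(\tfrac{n}{2}-1\right) = \tfrac{\ell-1}{2}$ via lemmas \ref{lempolesform} and \ref{lemmaxn-2}. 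You are also right that this is not circular \emph{within the paper's logical structure}: although the text remarks that $(\ref{ordDomega})$ implies lemma \ref{lempolesform}, that lemma is independently established there (by citation to \cite{BCS}, proposition 2.7), so it is available as an input. What your approach buys is an internal derivation that makes the dependence structure transparent and avoids invoking the specific computation in \cite{BCS}; what it costs is that it is still not literature-free, since lemma \ref{lempolesform} and $(\ref{orderoff})$ themselves rest on \cite{BCS} and \cite{BrENS} respectively, whereas the paper's single citation points directly at a formula proved from scratch in \cite{BCS}. One stylistic note: your parity discussion at the end is a sanity check rather than a needed step, since integrality of $\ord_D(\omega_{\sigma})$ is automatic and the identity you prove accounts for the half-integers on its own.
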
 
\begin{proof} See \cite{BCS} equation $(2.7)$.
\end{proof} 

\section{Picard-Fuchs equations}  \label{sectPF}
The family of basic cellular integrals satisfy interesting recurrence relations. We briefly sketch their properties.  Let $(\delta, \delta')$ be any convergent configuration and  fix signs of $f_{\delta/\delta'}, \omega_{\delta'}$ throughout this section. Define the generating series
$$F_{\delta/\delta'}(t) = \sum_{N\geq 0 } \int_{\Dom_{\delta}} f^N_{\delta/\delta'} t^N\,  \omega_{\delta'}  =  \int_{\Dom_{\delta}}  \omega_{\delta/\delta'}(t)$$
 where
$$\omega_{\delta/\delta'}(t) =  { 1 \over 1 - t f_{\delta/\delta'} }  \omega_{\delta'}\ . $$
The series $F_{\delta/\delta'}(t)$ converges for  $t \leq 1$ by $(\ref{maxformula})$  and the remarks which follow.   Define  a  hypersurface  in $\Mod_{0,S} \times \A^1$ by the vanishing locus of the following equation
$$H_{\delta/\delta'} \quad : \quad  1 - t f_{\delta/\delta'} = 0\ . $$
Let $\overline{H}_{\delta/\delta'}$ denote its Zariski closure in $ \overline{\Mod}_{0,S}\times \A^1$.
It is well-known how to construct, over some open $U \subset \A^1$ defined over $\Q$ containing $0$,    an algebraic vector bundle
$$ \mathcal{H}^{\rel}_{dR} = \mathcal{H}^{\ell}_{dR} (   ( \overline{\Mod}_{0,S} \backslash (\overline{H}_{\delta/\delta'} \cup A_{\delta'}), A_{\delta} \backslash  ((\overline{H}_{\delta/\delta'} \cup A_{\delta'} ) \cap A_{\delta}))/U 
 ) $$
 where $A_{\sigma}$ denotes the boundary divisor $\bigcup_{D\in\mathrm{Sing}(\omega_{\sigma})} D$ for any dihedral structure $\sigma$, 
 equipped with   an integrable Gauss-Manin connection 
$$\nabla^{\rel} : \mathcal{H}^{\rel}_{dR}\To \Omega^1_{U/\Q} \otimes_{\Or_{U/\Q}} \mathcal{H}^{\rel}_{dR}\  .$$
Its analytic vector bundle corresponds to a complex local system  $\mathcal{H}^{\rel}_B$. It is   the sheaf whose stalks at 
 $t\in U$ are   the relative singular cohomology groups
 \begin{equation}\label{relstalks}
( \mathcal{H}^{\rel}_B)_t= H_B^{\ell} ( \overline{\Mod}_{0,S} \backslash (\overline{H}_{\delta/\delta'}(t) \cup A_{\delta'}), A_{\delta} \backslash  ((\overline{H}_{\delta/\delta'}(t) \cup A_{\delta'} ) \cap A_{\delta}))\otimes \C
 \end{equation}
where   $\overline{H}_{\delta/\delta'}(t)$ denotes the fiber of $\overline{H}_{\delta/\delta'}$ over the point $t$.    Here, and later on in this section, $H^{\ell}_B(X,Y) $ denotes $H^\ell (X(\C),Y(\C))$, for $X,Y$  defined over $\Q$.

The differential form  $\omega_{\delta/\delta'}(t)$ is defined over $\Q$ and  has singularities in $\overline{H}_{\delta/\delta'} \cup A_{\delta'}$, so
defines  a   section   $\omega^{\rel}_{\delta/\delta'}$ of $\mathcal{H}^{\rel}_{dR}$.
There is a polynomial $ D_{\delta/\delta'}^{\rel}\in \Q[ t, {\partial_ t} ]$, 
such that 
$$ D_{\delta/\delta'}^{\rel}(\omega_{\delta/\delta'}^{\rel}(t) ) = 0$$
where $\partial_t$ stands for   $\nabla^{\rel}_{\partial/\partial t}$. Finally,
since the boundary of the closure of the real simplex $\Dom_{\delta}$  is contained in $A_{\delta}(\C)$, its 
 relative homology class  defines  a locally constant  section of   $(\mathcal{H}_B^{\rel})^{\vee}$ near $t=0$, and we obtain the  homogeneous Picard-Fuchs equation:
 $$ D_{\delta/\delta'}^{\rel} (F_{\delta/\delta'} (t) )= \int_{\Dom_{\delta}} D_{\delta/\delta'}^{\rel}  (\omega^{\rel}_{\delta/\delta'}(t) )  = 0  \ .$$
This equation is equivalent to a recurrence relation on the coefficients of $F_{\delta/\delta'}(t)$.

\subsubsection{Duality} The effect of duality  is not yet visible due to the asymmetric roles played by $\delta$ and $\delta'$.
To remedy this, consider, as above, the  algebraic vector bundle  (perhaps after making $U$ smaller) denoted by  
$$\mathcal{H}_{dR} =  \mathcal{H}^{\ell}_{dR} ( (\overline{\Mod}_{0,S} \backslash (\overline{H}_{\delta/\delta'} \cup A_{\delta'} \cup A_{\delta}))/U)$$
equipped with the Gauss-Manin connection $\nabla$. Its complex local system $\mathcal{H}_{B}$ has  stalks
$$(\mathcal{H}_{B})_t =  H_B^{\ell} ( \overline{\Mod}_{0,S} \backslash (\overline{H}_{\delta/\delta'}(t) \cup A_{\delta'} \cup A_{\delta})) \otimes \C$$
 at $t\in U$.
  As before, the class of  $\omega_{\delta/\delta'}(t)$   defines a section   $\omega_{\delta/\delta'}$  of $\mathcal{H}_{dR}$, which 
 is annihilated by  an operator we denote by $ D_{\delta/\delta'} \in  \Q[ t, {\partial_ t} ]$.

\begin{lem}  Let $(\delta, \delta')$ be convergent. For all $t\in \A^1$,  
$\overline{H}_{\delta/\delta'}(t) \cap A_{\delta}   = \emptyset$.
\end{lem}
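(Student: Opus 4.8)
The plan is to deduce the statement directly from the order-of-vanishing computation of \S\ref{sectProofconv}: the function $f_{\delta/\delta'}$ vanishes along the whole of $A_\delta$, so that the defining equation $1-t f_{\delta/\delta'}$ of $H_{\delta/\delta'}$ restricts to the nonzero constant $1$ there, and a hypersurface cut out by a function which is a unit along $A_\delta$ cannot meet $A_\delta$.

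First I would recall that, by Lemma \ref{lempolesform}, $A_\delta=\bigcup_{D\in\delta_f}D$ is precisely the union of the boundary divisors at finite distance for $\delta$. Fix such a $D\in\delta_f$. Since $(\delta,\delta')$ is convergent we have $D\notin\delta'_f$, so Lemma \ref{lemmaxn-2} gives $\I_D(\delta')<\tfrac n2-1$, and hence by $(\ref{ordDineq})$
$$\ord_D f_{\delta/\delta'}=\tfrac n2-1-\I_D(\delta')>0\ ,$$
which is therefore $\geq 1$, being an integer. Thus $f_{\delta/\delta'}$ has a zero of order at least one along every component of $A_\delta$. For any fixed $t\in\A^1$ it follows that the regular function $1-t\,f_{\delta/\delta'}$ is identically equal to $1$ along $A_\delta$ wherever $f_{\delta/\delta'}$ is regular, hence nowhere zero there. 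Consequently $H_{\delta/\delta'}(t)\cap A_\delta=\emptyset$ inside $\Mod_{0,S}$, and passing to the closure one argues as follows: a point $p\in A_\delta\cap \overline H_{\delta/\delta'}(t)$ would be the limit of points $x\to p$ of $\Mod_{0,S}$ with $f_{\delta/\delta'}(x)=t^{-1}$, whereas $f_{\delta/\delta'}(x)\to 0$ as $x\to p\in A_\delta$, forcing $t^{-1}=0$, a contradiction.

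The delicate step, and the main obstacle, is the clause \emph{wherever $f_{\delta/\delta'}$ is regular}. The function $f_{\delta/\delta'}$ vanishes along the generic point of each $D\in\delta_f$, but by $(\ref{orderoff})$ its polar locus is $A_{\delta'}=\bigcup_{D'\in\delta'_f}D'$; and although convergence guarantees $\delta_f\cap\delta'_f=\emptyset$, it does \emph{not} prevent a zero divisor $D\in\delta_f$ from crossing a pole divisor $D'\in\delta'_f$. At such a corner $f_{\delta/\delta'}$ is indeterminate, and the limiting argument above can fail there. These corners lie in $A_{\delta'}$, which is exactly the locus removed in forming the ambient space $\overline{\Mod}_{0,S}\setminus(\overline H_{\delta/\delta'}\cup A_{\delta'})$ underlying $\mathcal{H}^{\rel}_{dR}$. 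I would therefore carry out the whole argument on $\overline{\Mod}_{0,S}\setminus A_{\delta'}$, where $f_{\delta/\delta'}$ is a genuinely regular function vanishing along all of $A_\delta$; the crux is to combine the zero estimate $(\ref{ordDineq})$ with the identification, via $(\ref{orderoff})$ and $(\ref{ordDomega})$, of the polar locus of $f_{\delta/\delta'}$ as $A_{\delta'}$, so that the indeterminacy never meets the part of $A_\delta$ under consideration. Writing $f_{\delta/\delta'}$ as a monomial in the dihedral coordinates $u_c$, which are regular on $\overline{\Mod}_{0,S}\setminus A_{\delta'}$ and vanish exactly along the chords at finite distance for $\delta$, makes both the vanishing along $A_\delta$ and the absence of poles away from $A_{\delta'}$ manifest, and reduces the lemma to the positivity $(\ref{ordDineq})$.
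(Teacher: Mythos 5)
Your first two paragraphs are exactly the paper's proof, which consists of the single line ``by corollary \ref{cor3.9}, $f_{\delta/\delta'}$ vanishes along any irreducible component of $A_{\delta}$'' --- i.e.\ the positivity $\ord_D f_{\delta/\delta'}>0$ for $D\in\delta_f$ that you derive from lemma \ref{lemmaxn-2} and $(\ref{ordDineq})$. But the ``delicate step'' you isolate is not a removable technicality: it is a genuine gap, shared by the paper's own proof, and at the corner points you describe the lemma \emph{as stated} in fact fails. Convergence forbids $\delta_f\cap\delta'_f\neq\emptyset$, but not $A_\delta\cap A_{\delta'}\neq\emptyset$: already for $n=5$, $\delta'=(5,2,4,1,3)$, the partitions $\{2,3\}|\{1,4,5\}$ and $\{1,4\}|\{2,3,5\}$ are nested, so $D_{\{2,3\}}\in\delta_f$ meets $D_{\{1,4\}}\in\delta'_f$ at a point $p$. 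In the chart $(z_1,\ldots,z_5)=(0,a,1,b,\infty)$ one computes $f_{\delta/\delta'}=\pm\, a(a-1)(1-b)/\bigl(b(a-b)\bigr)$, so near $p=(1,0)$ we have $f_{\delta/\delta'}=xu/y$ with $x=a-1$, $y=b$ and $u$ a unit; the Zariski closure of $\{1-tf_{\delta/\delta'}=0\}$ is then locally the irreducible hypersurface $\{y=txu\}$, which contains $\{x=y=0\}\times\A^1$. Hence $p\in\overline{H}_{\delta/\delta'}(t)\cap A_{\delta}$ for \emph{every} $t$, and no completion of either argument can prove the statement as written. What survives --- and what your restriction to $\overline{\Mod}_{0,S}\setminus A_{\delta'}$ actually proves --- is the weaker statement $\overline{H}_{\delta/\delta'}(t)\cap\bigl(A_{\delta}\setminus A_{\delta'}\bigr)=\emptyset$, and this is precisely what the Mayer--Vietoris argument following the lemma uses, since there $A_2=A_{\delta}\setminus(A_{\delta'}\cap A_{\delta})$.

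There remains one unproven step in your fix: the assertion that $f_{\delta/\delta'}$ is genuinely regular on $\overline{\Mod}_{0,S}\setminus A_{\delta'}$, i.e.\ that its polar locus is contained in $A_{\delta'}$. By $(\ref{orderoff})$ one has $\ord_D f_{\delta/\delta'}=\I_D(\delta)-\I_D(\delta')$ for every irreducible boundary divisor, and for $D\notin\delta_f\cup\delta'_f$ this quantity is not obviously non-negative; a direct check shows it vanishes for all such $D$ when $n=5,6$, but for general convergent configurations you must either prove $\mathrm{Sing}(f_{\delta/\delta'})\subseteq\delta'_f$ combinatorially, or --- more robustly --- run your argument with $A_{\delta'}$ replaced by the full polar locus of $f_{\delta/\delta'}$ and verify that any extra polar components do not meet $A_{\delta}$. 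Note also that the dihedral-coordinate argument you invoke only gives regularity on the partial compactification $\overline{\Mod}_{0,S}\setminus\bigcup_{D\in\delta_\infty}D$, which excises more of $A_{\delta}$ than the application allows you to discard.
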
 
\begin{proof}  
By corollary \ref{cor3.9},  $ f_{\delta/\delta'}$ vanishes along  any irreducible component $D$  of $A_{\delta}$.  
 \end{proof}

There is a natural map $ (\mathcal{H}^{\rel}_{dR} , \nabla^{\rel}) \rightarrow (\mathcal{H}_{dR}, \nabla)$
which sends $\omega_{\delta/\delta'}^{\rel}$ to $\omega_{\delta/\delta'}$.
There is a corresponding map on local systems $\mathcal{H}_B^{\rel} \rightarrow \mathcal{H}_B$.  Let $t\in U$ and  consider the sets  
$$\begin{array}{lcl}
  U_1 = \overline{\Mod}_{0,S} \backslash A_{\delta}  & \supset    &   A_1 = \emptyset \\
  U_2 = \overline{\Mod}_{0,S} \backslash (\overline{H}_{\delta/\delta'}(t) \cup A_{\delta'})  & \supset  & A_2 = A_{\delta} \backslash  (A_{\delta'}  \cap A_{\delta})  \\
 \end{array}
$$ in $\overline{\Mod}_{0,S}$. The second $\supset$ follows from the previous lemma, which also gives
\begin{eqnarray}
 U_1 \cup U_2  & = &  \overline{\Mod}_{0,S} \backslash (A_{\delta}\cap A_{\delta'}) \qquad\qquad  \qquad  \supset \qquad A_1 \cup A_2 = A_2 \nonumber \\ 
 U_1 \cap U_2  & = &   \overline{\Mod}_{0,S} \backslash (\overline{H}_{\delta/\delta'}(t) \cup A_{\delta} \cup A_{\delta'}) \quad \,\,  \supset  \qquad A_1 \cap A_2 = \emptyset \nonumber
\end{eqnarray}
A relative Mayer-Vietoris sequence gives 
$$\rightarrow H_B^{\ell}(U_1 \cup U_2, A_1 \cup A_2) \rightarrow H_B^{\ell}(U_1,A_1) \oplus H_B^{\ell}(U_2,A_2) \rightarrow H_B^{\ell}(U_1\cap U_2, A_1 \cap A_2) \rightarrow $$
The natural morphism    $   \mathcal{H}^{\rel}_{B}   \rightarrow  \mathcal{H}_{B}$ corresponds  on stalks to the second map, restricted to the second component in the middle. Its  kernel has stalks  at $t\in U$ the image  of 
$$\ker \big(H_B^{\ell} (U_1 \cup U_2, A_1 \cup A_2) \rightarrow H_B^{\ell}(U_1,A_1)\big) $$
in $H^{\ell}_B(U_2,A_2)$.  This  has no dependence on  $t$, and is therefore constant.
 Thus the kernel of the  morphism $(\mathcal{H}^{\rel,an}_{dR}, \nabla^{\rel}) \rightarrow  (\mathcal{H}^{an}_{dR}, \nabla)$ of vector bundles on $U^{an}$ has  the trivial connection. It follows that  $D_{\delta/\delta'} \omega^{\rel}_{\delta/\delta'}$ is a section of  $\Or_{U}^m$ for some $m$.
By clearing denominators, we obtain  an  inhomogenous Picard-Fuchs equation of the form
\begin{equation} \label{InhomPF}
D_{\delta/\delta'}  F_{\delta/\delta'} (t)  = P_{\delta/\delta'}(t) 
\end{equation} 
where $P_{\delta/\delta'}(t) \in \C[t]$.  The duality is finally visible for the operator $D_{\delta/\delta'} $ because  on the open set $ \overline{\Mod}_{0,S} \backslash (\overline{H}_{\delta/\delta'}(t) \cup A_{\delta'} \cup A_{\delta})$, we have the identity
$$  t \omega_{\delta/\delta'}(t) = \pm  \omega_{\delta'/\delta}(t^{-1})   $$
which follows from equation $(\ref{fdrelatestwoomegas})$, and relates $D_{\delta/\delta'}$ and $D_{\delta'/\delta}$.
If we write $(\ref{InhomPF})$ as a recurrence relation between the coefficients of $F_{\delta/\delta'}$ in the form
\begin{equation} \label{prec} p_0(n) u_n + \ldots + p_{k}(n) u_{n+k}=0   
\end{equation}
where $p_i \in \Q[t]$, then the corresponding recurrence relation for  $F_{\delta'/\delta}$ is its dual:
\begin{equation} \label{dualrec}
p^{\vee}_0(n) u_n + \ldots + p^{\vee}_{k}(n) u_{n+k} =0
\end{equation}
where (after possibly multiplying $p^{\vee}_i$ by $(-1)^i$ owing to sign ambiguities), 
$$p^{\vee}_i(t) = p_{k-i}(-k-1-t) \qquad \hbox{ for all } 0\leq i\leq k\ .$$
In particular, if a convergent configuration $[\delta,\delta']$ is self-dual, then the coefficients of the generating series $F_{\delta/\delta'}(t)$   satisfy a recurrence relation  $(\ref{prec})$ whose  coefficients satisfy $p_i(t)= \lambda \,p_{k-i}(-k-1-t)$ 
for all $0\leq i \leq k$, and  some $\lambda \in\Q^{\times}$.

\begin{rem} The multiplicative structures on  cellular integrals \S\ref{sectMultStruct} implies that for certain
convergent configurations $[\delta_1, \delta_1']$ and $[\delta_2, \delta_2']$,  there exists a convergent
configuration $[\alpha, \alpha']$ such that  $F_{\alpha/\alpha'}(t)$ is the  Hadamard product 
of $F_{\delta_1/\delta_1'}(t)$ and $F_{\delta_2/\delta_2'}(t)$.
\end{rem}

\section{Generalised cellular integrals}\label{sectGen}
\subsection{Definition} Let $n=|S|\geq 5$ and let  $\delta,\delta'$ be a pair of dihedral structures on $S$.  Define  a rational function 
on $(\Pro^1)_S^*$ by the following formula:
 \begin{equation}
   \widetilde{f}_{\delta/\delta'} (\av,\bv)  =  \pm \prod_{i\in \Z/n\Z} {(z_{\delta_i} -z_{\delta_{i+1}})^{a_{\delta_i, \delta_{i+1}}}  \over (z_{\delta'_i} - z_{\delta'_{i+1}})^{b_{\delta'_{i},\delta'_{i+1}}}}   \end{equation} 
 where the indices $i$ are taken cyclically in $\Z/n\Z$ and  $\av=(a_{\delta_i,\delta_{i+1}})$, $\bv=(b_{\delta'_i,\delta'_{i+1}})$ are  integers satisfying the 
 homogeneity equations :
\begin{equation} \label{homequations}
a_{\delta_{i-1} , \delta_i } + a_{\delta_i, \delta_{i+1}} = b_{\delta'_{j-1}, \delta'_j} + b_{\delta'_{j}, \delta'_{j+1}}  \quad \hbox{ whenever  } \delta_i = \delta'_j\ ,
\end{equation} 
and  all indices are considered modulo $n$. With these conditions, $ \widetilde{f}_{\delta/\delta'} (\av,\bv)$ is $\PGL_2$-invariant and descends to a 
rational function $f_{\delta/\delta'}(\av,\bv) \in \Omega^0(\Mod_{0,S};\Q).$

\begin{defn} Let $S, \delta, \delta'$ be as above and suppose that $\delta_f \cap \delta'_f=\emptyset$. With  $\av, \bv$ parameters satisfying  $(\ref{homequations})$, define a generalised cellular form to be \begin{equation} \label{fsigmaabdef}
   f_{\delta/\delta'}(\av,\bv) \omega_{\delta'} \quad \in \quad \Omega^{\ell} (\Mod_{0,S};\Q)
\end{equation} 
Call the set of parameters $\av, \bv$  convergent if $(\ref{fsigmaabdef})$ has no poles along divisors $D$ at finite distance with respect to $\delta$. In this case define the generalised cellular integral to be
\begin{equation}\label{Igen} I_{\delta/\delta'} (\av,\bv) =  \pm  \int_{\Dom_{\delta}} f_{\delta/\delta'}(\av,\bv) \omega_{\delta'}\ .
\end{equation}
It converges by $(\ref{convergenceasvD})$. The action of $\Sigma(S)$ extends to an action on pairs of dihedral structures $(\delta, \delta')$ and also on parameters $\av, \bv$ 
by permuting indices. Clearly $(\ref{Igen})$   is invariant under this action, up to a sign.
\end{defn}
For any convergent pair $(\delta,\delta)'$, setting all $a_{i,j} , b_{k,l} =N$ clearly defines a solution to 
$(\ref{homequations})$ and gives back the basic cellular integrals $(\ref{INdefn})$. We now show that 
there is a non-trivial $n$-parameter family of convergent integrals of the form $(\ref{Igen}).$

\subsection{Parametrization}
Let $S=\{1,\ldots, n\}$ and let $\sigma\in \Sigma(n)$ be a choice of permutation  such that  $(\delta ,\delta') \sim (\delta^0,  \sigma \delta^0)$. 
To simplify the notations, 
we can write
 \begin{equation} \label{ftildesigmacase}
   \widetilde{f}_{\sigma} (\av,\bv)  =   \pm \prod_{i} {(z_{i} -z_{i+1})^{a_{i, i+1}}  \over (z_{\sigma_i} - z_{\sigma_{i+1}})^{b_{\sigma_{i},\sigma_{i+1}}}}   \end{equation} 
It descends to a rational function $f_{\sigma}(\av,\bv) \in \Omega^0(\Mod_{0,S};\Q)$.
In the case when $n$ is even, taking the alternating sum of the  equations $(\ref{homequations})$ yields
the  condition:
\begin{equation} \label{aequation} 
\sum_{i =1}^n (-1)^i (a_{\sigma_i -1 , \sigma_i } + a_{\sigma_i, \sigma_i + 1} )= 0
\end{equation} 
When $n=2k+1$ odd, the equations $(\ref{homequations})$ uniquely determine the  $b_{\sigma_{i}, \sigma_{i+1}}$
in terms of the $a_{i,i+1}$, and we  take the $a_{i,i+1}$ as parameters in $\Z^n$. This defines a map
\begin{eqnarray} \rho_{\sigma} \ : \  \Z^{2k+1}  & \To & \Omega^{\ell} (\Mod_{0,S};\Q) \label{evenmap} \\
 \av & \mapsto &  f_{\sigma }(\av,\bv) \, \omega_{\sigma} \nonumber
\end{eqnarray}
where $\bv$ is determined from $\av$ via $(\ref{homequations})$. In the case $n=2k$ is even, 
we can choose  a parameter    $b\in \bv$. Equation   $(\ref{aequation})$ defines a lattice $H_{\sigma} \subset \Z^{2k}$ isomorphic to $\Z^{2k-1}$. 
We can parametrize the space of generalised cellular integrands in this case by 
\begin{eqnarray} \rho_{\sigma,b} \ : \ H_{\sigma}\times \Z  & \To &    \Omega^{\ell} (\Mod_{0,S};\Q) \label{oddmap} \\
(\av, b) & \mapsto &     f_{\sigma}(\av,\bv)\,  \omega_{\sigma} \nonumber
     \end{eqnarray}
     where the remaining indices $\bv$ are determined   from $(\av,b)$ using $(\ref{homequations})$.

\begin{prop}  \label{prop4.2}
Let $D \subset \overline{\Mod}_{0,n}$ be an irreducible boundary divisor isomorphic to $\overline{\Mod}_{0,n-1}$ which is neither at finite distance
with respect to $\sigma\delta^0$ nor to $\delta^0$. Then
\begin{equation} \label{gencellvanishing}
v_{D} \big( f_{\sigma}(\av, \bv)  \,\omega_{\sigma} \big) \geq 0 
\end{equation}
for all $\av, \bv$ satisfying the equations $(\ref{homequations})$.  

Let $F \subset  \overline{\Mod}_{0,n}$ be an irreducible boundary divisor in $\delta^0_f$. Then
 \begin{equation} \label{OFD}
 v_F \big( f_{\sigma}(\av, \bv)  \,\omega_{\sigma} \big)  \geq   \sum_{i\in I } a_{i,i-1} - \sum_{j\in J} b_{\sigma_j, \sigma_{j+1}}
 \end{equation} 
where $|I| = n-2 $ and $|J| < n-2$ are certain subsets of $\{1,\ldots, n\}$ depending on $F$. It has  strictly fewer terms with negative coefficients
than with positive coefficients.
\end{prop}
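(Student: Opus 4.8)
The plan is to compute the order of vanishing $v_D\big(f_\sigma(\av,\bv)\,\omega_\sigma\big)=\ord_D f_\sigma(\av,\bv)+\ord_D\omega_\sigma$ along a single irreducible boundary divisor $D=D_{S_1|S_2}$, handling the two factors separately. For $\omega_\sigma$ I would quote $(\ref{ordDomega})$. For the rational function I would record the weighted analogue of $(\ref{orderoff})$: assigning to each factor $(z_i-z_{i+1})^{a_{i,i+1}}$ and $(z_{\sigma_i}-z_{\sigma_{i+1}})^{-b_{\sigma_i,\sigma_{i+1}}}$ its order along $D$ through the indicator $\I_D$ of \S\ref{sectProofconv}, which gives
$$\ord_D f_\sigma(\av,\bv)=\sum_{i\in\Z/n\Z}a_{i,i+1}\,\I_D(i,i+1)-\sum_{i\in\Z/n\Z}b_{\sigma_i,\sigma_{i+1}}\,\I_D(\sigma_i,\sigma_{i+1}).$$
Equivalently, and more transparently, I would use the local model in which the marked points of $S_1$ collide on a bubble: a difference $(z_a-z_b)$ vanishes to first order exactly when $\{a,b\}\subset S_1$ and is a unit otherwise, so $\ord_D f_\sigma(\av,\bv)$ is the sum of the $a_{i,i+1}$ over the $\delta^0$-edges internal to the partition minus the sum of the $b_{\sigma_i,\sigma_{i+1}}$ over the $\sigma\delta^0$-edges internal to it, consistency of the two clusters being guaranteed by $(\ref{homequations})$.

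For the first assertion, $D\cong\overline{\Mod}_{0,n-1}$ forces $|S_1|=2$, say $S_1=\{p,q\}$, and the hypotheses that $D$ lies at finite distance for neither $\delta^0$ nor $\sigma\delta^0$ say precisely that $p,q$ are adjacent in neither cyclic order. Hence the only candidate internal edge $(z_p-z_q)$ occurs in neither the numerator nor the denominator, so $\ord_D f_\sigma(\av,\bv)=0$ for all $\av,\bv$ satisfying $(\ref{homequations})$; in the $\I_D$-form the two half-sums coincide after summing $(\ref{homequations})$ over $\{p,q\}$. Since $D\notin\sigma_f=\mathrm{Sing}(\omega_\sigma)$ by Lemma~\ref{lempolesform}, we have $\ord_D\omega_\sigma\geq 0$, whence $v_D\big(f_\sigma(\av,\bv)\,\omega_\sigma\big)\geq 0$.

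For the second assertion I would reduce everything to a crossing count. Write $F=D_{S_1|S_2}$ with $S_1$ a $\delta^0$-interval of length $r$, $2\leq r\leq n-2$; the convergence hypothesis $\delta^0_f\cap\sigma_f=\emptyset$ forces $F\notin\sigma_f$. First, $\omega_\sigma$ is regular along $F$ by Lemma~\ref{lempolesform} (in fact $(\ref{ordDomega})$ gives $\ord_F\omega_\sigma=p-2\ge 0$, where $p$ is the number of $\sigma$-arcs of $S_1$), and I would simply discard this nonnegative term: $v_F\geq\ord_F f_\sigma(\av,\bv)$. Because $S_1$ is a $\delta^0$-interval, exactly two $\delta^0$-edges cross the partition, leaving $n-2$ internal ones that index the positive coefficients $I$. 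Because $F\notin\sigma_f$, the set $S_1$ splits into $p\geq 2$ arcs for $\sigma\delta^0$, so the $\sigma$-cycle crosses the partition in $2p\geq 4$ edges, leaving at most $n-4$ internal ones that index the negative coefficients $J$. Thus $|I|=n-2$ and $|J|=n-2p<n-2$, and the strict inequality $|J|<|I|$ — strictly fewer negative than positive terms — is exactly the convergence condition $p\geq 2$.

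The main obstacle is the bookkeeping in the last step: one must check that the half-integer contributions of the $\I_F$-formula reassemble, via the homogeneity relations $(\ref{homequations})$, into the asserted linear form with the stated index sets, and that discarding $\ord_F\omega_\sigma=p-2\geq0$ is the only slack introduced. Everything else is termwise once the regularity $\ord_F\omega_\sigma\geq 0$ and the two crossing counts (two for $\delta^0$, at least four for $\sigma\delta^0$) are in place, so the crux is purely the combinatorics of how the two dihedral orderings cut the partition $S_1|S_2$.
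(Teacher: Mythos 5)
Your proposal is correct and follows essentially the same route as the paper's proof: the same split $v_D = \ord_D f_{\sigma}(\av,\bv) + \ord_D \omega_{\sigma}$, the valuation formula $(\ref{valformab})$ (your collapse-of-$S_1$ local model is an equivalent restatement of it, valid precisely because the homogeneity equations $(\ref{homequations})$ make $\widetilde{f}_{\sigma}(\av,\bv)$ a $\PGL_2$-invariant of degree zero in each variable), lemma $\ref{lempolesform}$ and $(\ref{ordDomega})$ to dispose of $\omega_{\sigma}$, and the same internal-edge count for both parts. The only cosmetic differences are that in part 1 the paper cancels via the symmetric $\I_D$ formula together with $A=B$ and the homogeneity relations at $p$ and $q$, where you collapse $S_1=\{p,q\}$ directly, and in part 2 the paper invokes lemma $\ref{lemmaxn-2}$ to bound the number of negative terms by $n-3$, where your explicit arc count gives the marginally sharper $|J| = n-2p \leq n-4$ (and your identity $\ord_F\omega_{\sigma}=p-2$ is a correct refinement of the regularity the paper uses implicitly).
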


\begin{proof}
By lemma $\ref{lempolesform}$,   $\omega_{\sigma}$ has no poles along any such divisor $D$. Therefore
$$v_{D}  \big( f_{\sigma}(\av, \bv)  \,\omega_{\sigma} \big) \geq  v_D ( f_{\sigma}(\av, \bv))\ .$$
For the latter, we have by $(\ref{ftildesigmacase})$
\begin{equation} 
v_{D}  ( f_{\sigma}(\av, \bv))= \sum_{i \in \Z/n\Z}   a_{i, i+1}  \I_D(\{i,i+1\}) -   b_{\sigma_i, \sigma_{i+1}}   \I_D(\{ \sigma_i,\sigma_{i+1}\}) \ .   \label{valformab} 
\end{equation}
A  divisor $D$ isomorphic to $\overline{\Mod}_{0,n-1}$ corresponds to a partition $S \cup T$ of $\{1,\ldots, n\}$,  where  $S=\{p,q\}$, and $p,q$ are not consecutive with respect to $\delta^0$ and $\sigma \delta^0$. Then
\begin{equation} \label{2vdfs} 
2 \, v_{D}  ( f_{\sigma}(\av, \bv))= \sum_{i \in \Z/n\Z}   a_{i, i+1} \, \I(\{i,i+1\} \subseteq T)  -  b_{\sigma_i, \sigma_{i+1}}  \, \I(\{ \sigma_i,\sigma_{i+1}\} \subseteq T) 
\end{equation} 
where $T$ is the complement of $S$ in $\{1,\ldots, n\}$. If we denote by 
$$A = \sum_{i\in \Z/n\Z}   a_{i, i+1}  \qquad \hbox{ and }  \qquad B = \sum_{i\in \Z/n\Z}   b_{\sigma_{i}, \sigma_{i+1}}$$
then equation $(\ref{homequations})$ implies that $A=B$. Adding $B-A$ to $(\ref{2vdfs})$ gives
$$2\, v_{D}  ( f_{\sigma}(\av, \bv))= a_{p-1,p} + a_{p,p+1} + a_{q-1,q} + a_{q,q+1}
  -  b_{p_{-},  p }- b_{p , p_+ } -b_{q_{-},  q }-b_{q,  q_+ }   
$$
where $p_{\pm}$ are the adjacent neighbours of $p$ with respect to the ordering $\sigma$, and likewise for $q$. 
This quantity vanishes  by  $(\ref{homequations})$ and  proves the first part.

For the second part, consider a stable partition $S, T$ where the elements of $S$ and $T$ are consecutive (with respect to the standard dihedral ordering).
Then  by lemma $\ref{lemmaxn-2}$, the first sum in equation  $(\ref{valformab})$ yields  $n-2$  terms  which occur with a plus sign, and the second sum contributes  at
most $n-3$ terms, which occur with a  minus sign. 
\end{proof}

\begin{rem}  \label{remWeakCell} The conditions $(\ref{gencellvanishing})$ mean  that the integrand is `weakly cellular' in the sense that its polar locus is   contained
in the  set of divisors $\sigma_f$  with certain extra  divisors corresponding to stable partitions $S\cup T$ where $|S|, |T| \geq 3$.
With a little more work, one can find  further  constraints on the set of   extra divisors which can occur, and yet more constraints under the assumption that the integrand is convergent.
\end{rem}

The region of convergence for generalised forms in parameter space is defined by hyperplane inequalities. We know it is not compact because it contains
the infinite family of basic cellular integrals. The following corollary shows that it is genuinely $n$-dimensional (i.e. not contained in a hyperplane).

\begin{cor}  Let $R_{\sigma}$ denote the region in  the parameter space $(\ref{evenmap})$ or $(\ref{oddmap})$, depending on the parity of $n$, which  consists 
of points  corresponding to convergent forms.  Consider the region 
 $C^n \subset \N^n$   defined for all $n\geq 0 $ by 
$$C^n= \{ ( x_1,\ldots, x_n) \in \N^n :  \hbox{ for all }i, |x_i  - m| < \textstyle{m\over n^2}  \hbox{ for some } m \in \N\}\ . $$
It contains the diagonal $\N\hookrightarrow \N^n$. Then if $n=2k+1$ is odd, 
$$C^{2k+1} \  \subset \  R_{\sigma}  \ \subset \  \N^{2k+1}$$
and if $n=2k$ is even, 
$$C^{2k+1} \cap (H_{\sigma} \times \Z) \   \subset  \ R_{\sigma}  \ \subset  \ (H_{\sigma} \cap \N^{2k})\times \Z$$
\end{cor}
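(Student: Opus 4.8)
The plan is to realise $R_{\sigma}$ as the polyhedral cone cut out by the convergence inequalities and then analyse those inequalities through one clean formula for the relevant valuations. By $(\ref{convergenceasvD})$ and Lemma \ref{lempolesform}, the form $(\ref{fsigmaabdef})$ is convergent if and only if $v_F\big(f_{\sigma}(\av,\bv)\,\omega_{\sigma}\big)\geq 0$ for every $F\in\delta^0_f$; since $\omega_{\sigma}$ has no pole along such an $F$ (because $\delta^0_f\cap(\sigma\delta^0)_f=\emptyset$), this is equivalent to $v_F(f_{\sigma}(\av,\bv))\geq 0$, so $R_{\sigma}=\{(\av,\bv): v_F(f_{\sigma}(\av,\bv))\geq 0\ \forall F\in\delta^0_f\}$. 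First I would make each such valuation explicit. Fix $F=D_{S_1|S_2}\in\delta^0_f$, so $S_1,S_2$ are arcs of $\delta^0$. Writing $A=\sum_i a_{i,i+1}$ and $B=\sum_i b_{\sigma_i,\sigma_{i+1}}$, equation $(\ref{homequations})$ gives $A=B$. Exactly $n-2$ of the $\delta^0$-edges lie inside $S_1\cup S_2$ (all but the two crossing the cut), while the crossing $\sigma$-edges number $2s$ for some integer $s\geq 2$, where $s\geq 2$ holds because $F\notin(\sigma\delta^0)_f$, so $S_1$ is not a single $\sigma$-arc. Substituting into $(\ref{valformab})$ and using $A=B$ yields the key identity (sharpening $(\ref{OFD})$)
$$ v_F(f_{\sigma}(\av,\bv))=\tfrac12\Big(\sum_{e\in\partial_{\sigma}F} b_e-a_{\partial_1}-a_{\partial_2}\Big), $$
where $\partial_{\sigma}F$ is the set of $2s\geq 4$ crossing $\sigma$-edges and $a_{\partial_1},a_{\partial_2}$ are the two crossing $\delta^0$-edges.

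For the upper bound I would specialise to the corner divisors $F_i=D_{\{i,i+1\}|T}$, which lie in $\delta^0_f$ but not in $(\sigma\delta^0)_f$ by convergence of the configuration. Here $\partial_{\sigma}F_i$ consists precisely of the four $\sigma$-edges incident to $i$ and to $i+1$, and applying the two vertex relations of $(\ref{homequations})$ at $i$ and at $i+1$ collapses the identity above to $v_{F_i}=a_{i,i+1}$. Hence convergence forces $a_{i,i+1}\geq 0$ for every $i$, which gives $R_{\sigma}\subset\N^{2k+1}$ when $n=2k+1$, and $R_{\sigma}\subset(H_{\sigma}\cap\N^{2k})\times\Z$ when $n=2k$, the free parameter $b$ being unconstrained by these inequalities.

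For the lower bound, take the chosen parameters (the $\av$, together with the pinned parameter $b$ when $n$ is even) in $C^n$, so they all lie in $\big(m-\tfrac{m}{n^2},\,m+\tfrac{m}{n^2}\big)$ for some $m\in\N$. I would first propagate this to all the $b$-edges by telescoping the homogeneity recursion $b_{\sigma_i,\sigma_{i+1}}=c_i-b_{\sigma_{i-1},\sigma_i}$, where the common vertex sum $c_i$ lies within $2m/n^2$ of $2m$; this shows every $b_e$ lies within $\epsilon_b\leq(2n-3)m/n^2$ of $m$, the odd and even cases differing only in how the recursion is initialised (by the cyclic inversion formula on the odd cycle, respectively by the pinned parameter $b$ on the even cycle). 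Feeding these bounds into the key identity gives, for every $F\in\delta^0_f$,
$$ v_F\ \geq\ (s-1)m-s\,\epsilon_b-\tfrac{m}{n^2}, $$
which is increasing in $s$, so the worst case is $s=2$; there nonnegativity reduces to $2(2n-3)\leq n^2-1$, i.e.\ $n^2-4n+5\geq 0$, valid for all $n$. The case $m=0$ is trivial, since then the integrand is $\pm\omega_{\sigma}$, which converges by Lemma \ref{lemconv}. This establishes $C^n\subset R_{\sigma}$.

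The main obstacle is the lower bound, and within it the quantitative propagation step: controlling every $b$-edge from the near-diagonal hypothesis on the $a$'s (and on $b$ in the even case). The constant $m/n^2$ in the definition of $C^n$ is tailored precisely so that the $O(m/n^2)$ spread of the inputs yields only an $O(m/n)$ spread in the $b$-edges, which is in turn dominated by the surplus of positive over negative terms ($2s\geq 4$ against $2$) in the key identity. The even case is the delicate one, since the cycle of homogeneity relations is no longer invertible and one must use the extra pinned parameter to initialise the recursion and keep the telescoped deviations under control.
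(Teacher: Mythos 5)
Your opening reduction contains a genuine error, and it is exactly the point the paper's proof is careful about. From ``$\omega_{\sigma}$ has no pole along any $F\in\delta^0_f$'' you conclude that convergence of $f_{\sigma}(\av,\bv)\,\omega_{\sigma}$ is \emph{equivalent} to $v_F(f_{\sigma}(\av,\bv))\geq 0$ for all $F\in\delta^0_f$, i.e.\ that $R_{\sigma}=\{(\av,\bv):v_F(f_{\sigma}(\av,\bv))\geq 0\ \forall F\in\delta^0_f\}$. But absence of a pole only gives $v_F(\omega_{\sigma})\geq 0$, and by $(\ref{ordDomega})$ this order is frequently \emph{strictly} positive: for $n=6$, $\sigma=(1,4,2,6,3,5)$ and $F=D_{\{1,2,3\}|\{4,5,6\}}$, no $\sigma$-edge is contained in $\{1,2,3\}$ or in $\{4,5,6\}$, so $\I_F(\sigma\delta^0)=0$ and $\ord_F(\omega_{\sigma})=\tfrac{\ell-1}{2}=1$. (This is visible in the paper's own Rhin--Viola example, where the condition attached to $123|456$ is $a_1+a_2+1\geq 0$; the $+1$ is precisely this extra order of vanishing of $\omega_\sigma$.) So divisor by divisor only the implication $v_F(f)\geq 0\Rightarrow v_F(f\omega_{\sigma})\geq 0$ holds, hence only the inclusion $\{v_F(f)\geq 0\ \forall F\}\subseteq R_{\sigma}$. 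This is harmless for your lower bound, which uses nothing more than that inclusion, but it breaks your upper bound as justified: to deduce from convergence that $a_{i,i+1}=v_{F_i}(f_{\sigma}(\av,\bv))\geq 0$ you need $\ord_{F_i}(\omega_{\sigma})=0$ \emph{exactly} at the corner divisors $F_i=D_{\{i,i+1\}|T}$. This is true --- since $i,i+1$ are not $\sigma$-consecutive (convergence of the configuration for $k=2$), exactly $n-4$ of the $\sigma$-edges lie in $T$ and none in $\{i,i+1\}$, so $\I_{F_i}(\sigma\delta^0)=\tfrac{n-4}{2}$ and $(\ref{ordDomega})$ gives $\ord_{F_i}(\omega_{\sigma})=0$ --- and it is exactly the verification the paper performs with $(\ref{ordDomega})$; your proof must include it.

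With that repaired, the rest is correct, and your lower bound takes a genuinely different, more quantitative route than the paper's. The paper substitutes the relations $(\ref{homequations})$ one at a time into $(\ref{valformab})$ until each valuation is expressed in the chosen parameters, then argues abstractly: the resulting linear form has at most $n^2$ terms, its positive coefficients outweigh its negative ones by at least one (by proposition $\ref{prop4.2}$, the substitution preserving this surplus), and any integral linear form with these two properties is positive on $C^n$. You instead sharpen $(\ref{OFD})$ to the exact identity $v_F=\tfrac12\big(\sum_e b_e-a_{\partial_1}-a_{\partial_2}\big)$, the sum running over the $2s\geq 4$ crossing $\sigma$-edges (this is correct: it follows from $(\ref{valformab})$ together with $A=B$), and then propagate explicit deviation bounds through the homogeneity recursion. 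Both arguments run on the same engine --- the surplus of positive over negative terms --- but yours produces explicit constants where the paper avoids tracking them. One arithmetic slip on your side: one-directional telescoping from the pinned edge needs up to $n-1$ steps, so it yields $\epsilon_b\leq (2n-1)m/n^2$, not $(2n-3)m/n^2$; this is harmless, since $2(2n-1)\leq n^2-1$ for all $n\geq 3$ and your final inequality still closes.
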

\begin{proof} For the upper bound, observe that an application of formula $(\ref{valformab})$ for the order of vanishing along a divisor at finite distance $D$ corresponding
to a stable partition $\{i,i+1\} \cup \{1,\ldots, i-1, i+2,\ldots, n\}$ gives 
\begin{equation} \label{valforeasydivisors}
\ord_D  \big( f_{\sigma}(\av, \bv)  \big) = a_{i,i+1}
\end{equation}
One verifies using  $(\ref{ordDomega})$ that  $\ord_D  (\omega_{\sigma}) =0$ for such divisors $D$.
Therefore, by $(\ref{convergenceasvD})$, convergence requires that all indices $a_{i,i+1}$ be $\geq 0$. 

For the lower bound,  consider the case $n=2k$. The case when $n$ is odd is similar. 

 Choose a cyclic ordering on $\sigma$, and assume without loss of generality that $b=b_{\sigma_n,\sigma_1}$.  Consider
 a linear form $(\ref{valformab})$. 
 Substitute  an equation $(\ref{homequations})$ of the form
 \begin{equation} \label{brelas} 
 b_{\sigma_1,\sigma_2} = a_{\sigma_2 -1, \sigma_2} + a_{\sigma_2, \sigma_2+1} - b_{\sigma_2, \sigma_3}
 \end{equation} 
to replace  the indeterminate $b_{\sigma_1,\sigma_2}$ with its successor 
 $b_{\sigma_2,\sigma_3}$. Proceed in this manner until  $(\ref{valformab})$ is written in terms of the $a_{i,i+1}$ and $b$ only.
At the end  there will be  at most $n^2$ terms. 
Furthermore, the sum of the positive coefficients will exceed the sum of the negative coefficients by at least one by proposition $\ref{prop4.2}$
and since $(\ref{brelas})$ preserves the number of positive minus the number of negative terms.
 Any  linear form with these properties
takes positive values on $C^n$.  By $(\ref{OFD})$, this region is contained in $R_{\sigma}$.
\end{proof}
The above upper and lower bounds  on $R_{\sigma}$ can easily be improved if one wishes.

\subsection{Examples}
\subsubsection{Dixon's integrals \cite{Dixon} for $1,\zeta(2)$}
Let $n=5$ and  $\sigma = (5,2,4,1,3)$.  
 The generalised cellular integrand is
 $$\widetilde{f}_{\sigma}(\av,\bv)= \pm { (z_1-z_2)^{a_{1,2}}(z_2-z_3)^{a_{2,3}} (z_3-z_4)^{a_{3,4}} (z_4-z_5)^{a_{4,5}} (z_5-z_1)^{a_{5,1}} \over  (z_5-z_2)^{b_{5,2}} (z_2-z_4)^{b_{2,4}}(z_4-z_1)^{b_{4,1}}(z_1-z_3)^{b_{1,3}}(z_3-z_5)^{b_{3,5}}    } $$
 where the exponents satisfy $a_{1,2}+ a_{2,3} = b_{5,2}+b_{2,4}$, $\ldots$, $a_{5,1}+ a_{1,2} = b_{4,1}+b_{1,3}$.
 Since $n$ is odd, we can take as our set of parameters $a_i= a_{i,i+1}$ for $i \in \Z/5\Z$ and solve for the $b_{i,j}$.
The generalised cellular  integral in simplicial coordinates is
$$\int_{0\leq t_1 \leq t_2 \leq 1} { t_1^{a_1} (t_2-t_1)^{a_2} (1-t_2)^{a_3}  \over   t_2^{b_{1,3}} (1-t_1)^{b_{2,4} } } {dt_1 dt_2 \over (1-t_1)t_2} $$
where $b_{1,3} = a_1+a_2-a_4$ and $b_{2,4}= a_2+a_3  -a_5$, which follows from  solving the homogeneity equations $(\ref{homequations})$.
By $(\ref{valforeasydivisors})$ and $(\ref{ordDomega})$, the valuation of the integrand along the divisor 
$D_{\{12\}|\{345\}}$ is $a_{1,2}$.  There are exactly five divisors at finite distance obtained from this one by cyclic symmetry, and therefore the convergence conditions are exactly 
$a_{i} \geq 0$  for  $i \in \textstyle{ \Z/ 5\Z}$.
Now one can change variables to transform the previous integral into cubical coordinates $t_1 =xy, t_2=y$. This results in the integrals
$$I(a_{1}, a_{2}, a_{3}, a_{4}, a_{5}) = \int_{[0,1]^2} { x^{a_{1}} (1-x)^{a_{2}} y^{a_{4}} (1-y)^{a_{3}}   \over  (1-xy)^{a_{2}+a_{3}-a_{5}} } {dx dy \over 1-xy} $$
which coincide with $(\ref{AIRV2})$.  This family of integrals has a large group of symmetries  \cite{RV2}. 
A geometric derivation of these transformations  in terms
of natural morphisms between moduli spaces  was given in \cite{BrENS} \S7.7.

This family of integrals yields linear forms in $1$ and $\zeta(2)$.  The order of  vanishing
of the integrand along  the  five divisors at infinity   are  
$a_{i-2}-a_i-a_{i+1}-1$ for $i \in \Z/5\Z$.  If any of these forms is $\geq 0$,  the coefficient of $\zeta(2)$ in $I$ vanishes, by 
 lemma \ref{lemcompletebroken}.

 \subsubsection{ Rhin-Viola's integrals for $\zeta(3)$}
Let $n=6$ and $\sigma = (1,4,2,6,3,5)$.  We choose parameters $a_i= a_{i,i+1}$ and $b= b_{3,6}$. Then equation $(\ref{aequation})$ is the equation
$$a_{4}+ a_{5} = a_{1}+a_{2} \ .$$
The generalised cellular form (up to an overall sign  chosen to ensure that it is positive on the simplex $0\leq t_1\leq t_2\leq t_3 \leq 1$)
is 
\begin{equation} \label{gencellz3} {   t_1^{a_{1}}(t_2-t_1)^{a_{2}}(t_3-t_2)^{a_{3}}(1-t_3)^{a_{4}}
\over t_3^{b_{1, 4}}(t_3-t_1)^{b_{2, 4}}(1-t_2)^{b_{3, 5}}  } {dt_1 dt_2 dt_3 \over t_3(t_3-t_1)(1-t_2)} \ .
\end{equation} 
Using the homogeneity equations $(\ref{homequations})$ it can be rewritten in terms of our  parameters via $b_{1,4}= a_{6}+a_{3}-b$, $b_{2,4}= a_{4}-a_{ 6}+b$,  and 
$b_{3,5}= a_{2}+a_{3}-b$.
The convergence conditions  for the six divisors obtained from $\{12|3456\}$
by cyclic permutations lead, by equation $(\ref{valforeasydivisors})$ and $(\ref{ordDomega})$, to inequalities
$ a_{i } \geq 0$  for all  $i\in \Z/6\Z\ $.
There are three further divisors at finite distance, which, on applying $(\ref{valformab})$ yield the following linear forms, which can 
be reduced to our choice of parameters using $(\ref{homequations})$:
\begin{eqnarray}
123|456  &  : & \textstyle{1\over 2} (a_{1, 2}+a_{2, 3} +a_{4, 5} +a_{1,6}+ 2) \ = \  a_{1}+a_{2} +1         \ \geq 0  \nonumber  \\
126|345   &  :  & \textstyle{1\over 2} (a_{1, 2} + a_{3, 4}+a_{4, 5} +a_{1,6}   -b_{2, 6} -b_{3, 5}) \ = \   a_{4}   +b    -a_{2}  \ \geq 0  \nonumber \\
156|234   &  : &  \textstyle{1\over 2} a_{2, 3}+a_{3, 4}+a_{5, 6}+a_{1, 6}  \ = \  a_{6}+a_{2}+a_{3}-a_{4}-b    \ \geq 0 \nonumber 
\end{eqnarray}
Thus, in our choice of parameter space, the region of convergence is defined by the second and third  hyperplanes (since the first linear form is trivially non-negative).

\begin{lem} Rhin and Viola's family of integrals $(\ref{AIRV3})$ for $\zeta(3)$ coincides, up to reparametrization,  with the family  of generalised cellular integrals 
for $\sigma = (1,4,2,6,3,5)$. 
\end{lem}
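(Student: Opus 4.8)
The plan is to prove the lemma exactly as the preceding Dixon/$\zeta(2)$ case was handled, namely by exhibiting an explicit birational change of variables that carries the generalised cellular form $(\ref{gencellz3})$ over the simplex $\Dom_{\delta^0}=\{0\leq t_1\leq t_2\leq t_3\leq 1\}$ onto Rhin and Viola's integrand $(\ref{AIRV3})$, and then reading off the induced affine dictionary between the two parameter spaces. The starting data are the seven distinguished factors of $(\ref{gencellz3})$ whose valuations in terms of $a_1,\dots,a_6,b$ were already recorded in this subsection: the four numerator divisors $t_1,\,t_2-t_1,\,t_3-t_2,\,1-t_3$ (the $\delta^0$-consecutive chords $\{1,2\},\{2,3\},\{3,4\},\{4,5\}$) and the three denominator divisors $t_3,\,t_3-t_1,\,1-t_2$ (the $\sigma\delta^0$-consecutive chords $\{1,4\},\{2,4\},\{3,5\}$). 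The one genuinely new feature compared with $\zeta(2)$ is that the naive cubical substitution $t_1=xyz,\ t_2=yz,\ t_3=z$ only produces the \emph{factorised} hyperbolae $1-xy$ and $1-yz$, whereas $(\ref{AIRV3})$ requires the single indecomposable hyperbola $1-(1-xy)z$. The correct map is therefore the cubical coordinate change adapted to the dihedral structure $\sigma\delta^0$, precomposed with the M\"obius/cross-ratio reshuffling that realises the Rhin--Viola symmetries geometrically; this is exactly the morphism constructed in \cite{BrENS} \S7.7, and I would simply transport it to the present configuration. Its effect is to send the seven cellular divisors above to the seven Rhin--Viola factors $x,\,1-x,\,y,\,1-y,\,z,\,1-z,\,1-(1-xy)z$, the distinguished non-adjacent divisor going to the hyperbola $1-(1-xy)z$.

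With the map in hand, I would pull back the form and track the Jacobian together with the exponent shifts it induces on each factor. Matching exponents divisor by divisor then yields a \emph{linear} expression for Rhin and Viola's parameters in terms of $(a_1,\dots,a_6,b)$. Here the homogeneity relations $(\ref{homequations})$ — equivalently $a_4+a_5=a_1+a_2$ together with $b_{1,4}=a_6+a_3-b$, $b_{2,4}=a_4-a_6+b$, $b_{3,5}=a_2+a_3-b$ — are what guarantee that the dictionary is affine and invertible, so that the cellular family for $\sigma=(1,4,2,6,3,5)$ and the family $(\ref{AIRV3})$ have the same dimension and are interchanged by reparametrisation.

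Finally I would match the two regions of convergence. On the cellular side these are the inequalities $a_i\geq 0$ coming from $(\ref{valforeasydivisors})$ and $(\ref{ordDomega})$, together with the two genuine hyperplane conditions extracted above from the divisors $126|345$ and $156|234$ (the condition from $123|456$ being automatic); under the dictionary these should correspond precisely to Rhin and Viola's non-negativity constraints on the exponents in $(\ref{AIRV3})$, which would complete the identification. I expect the main obstacle to be the construction and verification of the change of variables producing $1-(1-xy)z$: the naive cubical coordinates demonstrably fail, one must invoke the twisted map of \cite{BrENS} \S7.7 (the geometric incarnation of Rhin and Viola's group of order $1920$ studied in \cite{RV2}), and one must check that all seven exponents, \emph{including} the shifts contributed by the Jacobian, transform consistently. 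Everything else is the routine factor-by-factor bookkeeping already illustrated in the $\zeta(2)$ case.
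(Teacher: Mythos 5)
Your overall strategy (explicit birational change of variables, then an affine parameter dictionary, then matching of convergence regions) is the same as the paper's, and your parameter and convergence bookkeeping agrees with it. But there is a genuine gap at precisely the step you yourself identify as the crux: you never construct the change of variables, and the prescription you give for obtaining it is wrong. You assert that ``the naive cubical coordinates demonstrably fail'' and that one must instead invoke a twisted map from \cite{BrENS} \S 7.7. In fact the paper's proof begins with exactly the naive cubical substitution $t_1=xyz$, $t_2=yz$, $t_3=z$, together with the renaming $(a_1,a_2,a_3,a_4,a_6,b)=(l,s,k,q,r,\,r-q-h+s+k)$, which produces the factorised form with denominators $1-xy$ and $1-yz$; the missing ingredient is then one further \emph{elementary} substitution, written explicitly in the paper as $(\ref{ChangeofVarRV3})$, namely $(x,y,z)\mapsto\bigl(1-xy,\tfrac{1-y}{1-xy},z\bigr)$, which carries the Rhin--Viola hyperbola $1-(1-xy)z$ to $1-yz$ and thereby identifies $(\ref{AIRV3})$ with the cubical form. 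Equivalently, composing the two steps, the direct map from Rhin--Viola's cube to the simplex is $(t_1,t_2,t_3)=\bigl((1-x)z,\,(1-xy)z,\,z\bigr)$, under which $1-t_2$ pulls back to $1-(1-xy)z$ while the remaining cellular factors pull back to \emph{monomials} in $x,1-x,y,1-y,z,1-z$ (for instance $t_3-t_2\mapsto xyz$ and $t_1\mapsto (1-x)z$); this also corrects your claim of a divisor-by-divisor bijection between the seven cellular factors and the seven Rhin--Viola factors. Without this explicit map, or an equivalent one, your argument does not go through.

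The appeal to \cite{BrENS} \S 7.7 cannot substitute for it: in this paper that reference is invoked, in the Dixon/$\zeta(2)$ discussion, to explain geometrically the \emph{symmetry group} of the family $(\ref{AIRV2})$ via product and forgetful morphisms between moduli spaces, not to supply a coordinate change producing the indecomposable hyperbola $1-(1-xy)z$; there is no ``cubical coordinate change adapted to $\sigma\delta^0$'' defined there that you could transport. To summarise: you correctly isolate the difficulty, correctly observe that both families are six-dimensional (via $a_4+a_5=a_1+a_2$ on one side and $j+q=l+s$, $m=k+r-h$ on the other), and correctly plan the convergence matching ($a_i\geq 0$ together with the two hyperplanes from the divisors $126|345$ and $156|234$, corresponding to $h,k,l,q,r,s\geq 0$, $l+s-q\geq 0$, $r+k-h\geq 0$), but the heart of the proof --- the explicit change of variables --- is missing, and the route you propose for obtaining it would not produce it.
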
 
\begin{proof} Pass to cubical coordinates $t_1 = xyz, t_2=yz, t_3=z$, and rename the parameters
$ ( a_{1}, a_{2}, a_{3}, a_{4}, a_{6}, b) $  by $( l, s ,k ,q,r, r-q-h+s+k)$
respectively. 
Then $(\ref{gencellz3})$ leads to the family of period  integrals on $\Mod_{0,6}$ of the form
$$\int_{[0,1]^3}{x^l(1-x)^{s}y^{l+s} (1-y)^k z^{l+s-q} (1-z)^q \over (1-xy)^{k-h+s} (1-yz)^{h+q-r} } {dxdydz \over (1-xy) (1-yz)}\ ,$$
depending on the six new parameters $h,k,l,q,r,s$. The convergence conditions above translate into the inequalities 
$h,l,s,k,q,r\geq0$, $l+s-q \geq 0$, $r+k-h \geq 0.$ 
 This is exactly the family of  integrals $(\ref{AIRV3})$, after applying the change of variables
\begin{equation}\label{ChangeofVarRV3}
(x,y,z) \mapsto \Big( 1-xy, {1-y \over 1-xy}, z\Big)\ .
\end{equation}
\end{proof}

In particular, this family of integrals gives linear forms in $1,\zeta(3)$ by \cite{RV2}.

\subsubsection{Generalised cellular family for  $\sigma = (8,2,7,3,6,4,1,5)$} \label{sectMo8ex}
Choose as parameters $a_i= a_{i,i+1}$ for $i\in \Z/8\Z$ and $b=b_{5,8}$.
The equation $(\ref{aequation})$ is then the relation 
$$ H_{\sigma} : \qquad a_{6}+a_{7} + a_{8} = a_{2}+a_{3}+a_{4}$$
A  reduced set of convergence conditions are given by $a_{i} \geq 0$  for all $i \in \Z/8\Z$, and
$$
\begin{array}{lrl}
  128|34567&  a_{1}  + b -a_{7} &  \geq 0   \\
 1278|3456  &  a_{3}+a_{4}-a_{6}   &   \geq 0  \\
 1234|5678  &    a_{5}+a_{6}+a_{7}-b  +1 &    \geq 0  \\
 456|12378    & a_{1}+a_{2}+b  -a_{6}-a_{7} & \geq    0 \nonumber 
\end{array}
$$
The corresponding divisor is indicated on the left hand-side. All other convergence conditions are a  consequence of  these ones.
The integral is given by 
$$I(\av,b) = \int_{\Dom_5}  { t_1^{a_{1}} (t_2-t_1)^{a_{2}}(t_3-t_2)^{a_{3}}(t_4-t_3)^{a_{4}}(t_5-t_4)^{a_{5}}(1-t_5)^{a_{6}} \over 
 (1-t_1)^{b_{2, 7}} (1-t_2)^{b_{3, 7}} t_3^{b_{1, 4}} t_4^{b_{1, 5}}  (t_5-t_2)^{b_{3, 6}} (t_5-t_3)^{b_{4, 6}} } \omega_{\sigma}$$
 where 
$$ \omega_{\sigma} = {dt_1 \ldots dt_5  \over (1-t_1)(1-t_2)(t_5-t_2)(t_5-t_3)t_3t_4}$$
and the parameters in the denominator are given by
\begin{eqnarray}
b_{2,7}= a_{1}+a_{6}-a_{3}-a_{4}+b \nonumber & , &     b_{4,6} = a_{4}+a_{5} +a_{6}+a_{7}-a_{1}-a_{2} -b  \\
b_{3,7} = a_{3}+a_{4} + a_{7}-a_{1}-b \nonumber & , & b_{1,4} = a_{1}+a_{2} + a_{3}-a_{5} -a_{6}-a_{7}+b\\
b_{3,6}  = a_{1} + a_{2}-a_{4}-a_{7}+b \nonumber  &, & b_{1,5} =  a_{4}+a_{5} -b  
\end{eqnarray}
Using the symbolic integration programs due to Erik Panzer \cite{Panzer, PanzerProg}, or \cite{BBog1}, one can compute many examples of such generalised cellular integrals 
and finds experimentally that they are linear combinations of $1, \zeta(3), \zeta(5)$ only.\footnote{Wadim Zudilin has
very recently proved that this family of integrals is equivalent to another family  considered by Viola \cite{Viola}, 
and similar to integrals in  \cite{Zlob1}, \cite{Zlob2} (private communication)}.
I made a half-hearted attempt to search for $I(a_1,\ldots, a_7,b)$ in which the coefficient of $\zeta(3)$ vanishes. Tantalisingly, I found
the following examples, which could  be part of an infinite sequence of approximations to $\zeta(5)$ (perhaps after applying a symmetry argument or
modifying the numerators of the family $I(\av,b)$),
or could just be accidental:
\begin{eqnarray}
 I(  1, 0, 0, 1, 0, 0, 0, 0)  & =  & 2\zeta(5) - 2  \nonumber \\
I( 2,0,0,2,0,0,0,0 ) & = & 2 \zeta(5)  - {33\over 16}  \nonumber \\ 
I( 3, 2, 0, 3, 2, 0, 2, 2) & = &  60 \zeta(5) -  {161263 \over 2592} \nonumber 
\end{eqnarray}
In a different direction, a residue computation shows that a  large family of these integrals has vanishing $\zeta(5)$ coefficient, and hence gives linear forms in $1,\zeta(3)$.  It can be made explicit by applying a version of lemma \ref{lemcompletebroken} and computing
the order of vanishing of the integrand along a cellular boundary divisor corresponding to $\sigma$.  It would be interesting to know whether this 
leads to  new approximations to $\zeta(3)$.

\section{Multiplicative structures} \label{sectMultStruct}

There are  partial multiplication laws  between  cellular integrals generated by `product maps' between moduli spaces. 
\subsection{Product maps}
Let $S$ be a set with $n\geq 3$ elements, and let $S_1, S_2 \subset S$ be subsets satisfying
$$
|S_1 \cap S_2| = 3 \qquad \hbox{ and } \qquad S= S_1 \cup S_2\ .
$$
A \emph{product map}, defined in  \cite{BrENS} \S2.2, \S7.5, is the product of forgetful maps
$$m: \Mod_{0,S} \To \Mod_{0,S_1} \times \Mod_{0,S_2}\ .$$
It follows from the assumptions on $S_1$ and $S_2$ that  is an open immersion, and that the dimensions of the source and target are equal.

Now if $\delta_1,\delta_2$
are dihedral structures on $S_1,S_2$ then 
$$m^{-1} (\Dom_{\delta_1} \times \Dom_{\delta_2}) = \bigcup_{\delta: \delta|_{S_i} = \delta_i}  \Dom_{\delta}$$
where the union is over the set of dihedral structures $\delta$ on $S$ whose restrictions to $S_1, S_2$ coincide with $\delta_1,\delta_2$.  Let $\omega_i \in \Omega^{|S|_i-3}(\Mod_{0,S_i})$ for $i=1, 2$. Then
\begin{equation}  \label{genproduct}
  \int_{\Dom_{\delta_1}} \omega_1 \times \int_{\Dom_{\delta_2}} \omega_2 = \int_{m^{-1} (\Dom_{\delta_1} \times \Dom_{\delta_2})} m^* (\omega_1 \otimes \omega_2)
  \end{equation}
 in the case when all terms converge.
This formula can be used to multiply two cellular integrals. It  gives a third cellular integral under certain conditions  on $\delta_1,\delta_2$. 
\subsection{Multiplication of pairs of dihedral structures}
Fix a set  
$$T = \{1,2,3\}$$
 on three elements, with the ordering $1<2<3$.
 Define a \emph{triple} in a set $S$  with $n\geq 3$ elements to be an injective map 
$t:T \hookrightarrow S.$

\begin{defn}  \label{defnmultipliable} Let $(\delta, \delta')$ be a pair of dihedral structures on $S$, and let $t:T\hookrightarrow S$ be a triple.
  We say that $(\delta, \delta')$ is  \emph{multipliable along} $t$ if:

\begin{enumerate}
\item  The elements $t(1),t(2),t(3)$,  in that order,  are consecutive  with respect to  $\delta$.
\item  The elements $t(1), t(3)$ are consecutive with respect to  $\delta'$.
\end{enumerate}
We say that a configuration on $S$ is \emph{multipliable}, if for some, and hence any representative $(\delta, \delta')$,
there exists a triple $t$ in $S$ satisfying $(1)$ and $(2)$.
\end{defn}

\begin{rem} Note that  $(\delta, \delta')$ is multipliable along $t:T \hookrightarrow S$ if and only if it is multipliable along   
$\widetilde{t}:T \hookrightarrow S$, where $(\widetilde{t}(1), \widetilde{t}(2), \widetilde{t}(3) ) =(t(3),t(2),t(1)).$
\end{rem}

Suppose that we have  two pairs of dihedral structures  $(\delta_1,\delta_1')$ on $S_1$ and $(\delta_2, \delta_2')$  on $S_2$, and $t_i:T\hookrightarrow S_i$,
for $i=1,2$  such that 
$$(\delta_1, \delta_1') \quad \hbox{ and }  \quad (\delta_2, \delta_2')^{\vee}=(\delta_2', \delta_2) \quad  \hbox{ are multipliable along } t_1, t_2 \hbox{ respectively}\ . $$
Note that it is the \emph{dual} of $(\delta_2, \delta_2')$ which must be multipliable along $t_2$. Let
$$S= S_1 \cup_{t_1=t_2} S_2$$
denote the  disjoint union of the sets $S_1$ and $S_2$ modulo the  identification $t_1(i)=t_2(i)$ for $i=1,2,3$.
Finally, we can define the product to be
$$(\delta_1, \delta_1') \star_{t_1,t_2} (\delta_2,\delta_2') =  (\alpha, \alpha')$$
where $\alpha$ (respectively $\alpha'$) is the unique dihedral structure on $S$ whose restrictions to $S_i$
coincide with $\delta_i$  (respectively, $\delta'_i$) for $i=1,2$.  In the language of \cite{BCS},
$\alpha$ is a relative shuffle of (cyclic structures representing) $\delta_1$ and $\delta_2$, and similarly for $\alpha', \delta_1', \delta_2'$. 

\begin{example} \label{exampleprod} In the following examples, a tuple $(s_1,\ldots, s_n)$  denotes the dihedral structure in which $s_i$ are arranged consecutively around a circle (considered modulo reflections).
Firstly,  the pair of dihedral structures 
$$\big((p_1,p_2,\mathbf{p_3},\mathbf{p_4},\mathbf{p_5}) , (p_2,\mathbf{p_4},p_1,\mathbf{p_3},\mathbf{p_5})\big) $$ 
 is multipliable along $(1,2,3) \mapsto (p_3,p_4,p_5)$.
Consider the pair of dihedral structures 
$$\big((\mathbf{q_1},q_2,q_3,\mathbf{q_4},\mathbf{q_5},q_6) , (q_6,q_2,\mathbf{q_4},\mathbf{q_1},\mathbf{q_5},q_3)\big)\ .$$
Its dual is multipliable along $(1,2,3) \mapsto (q_4,q_1,q_5)$. Let $S= \{ p_1,p_2,p_3,p_4,p_5,q_2,q_3,q_6\}$.
 The product of these two dihedral structures is 
 $$ \big(  ( p_1,p_2,p_3,q_3,q_2,p_4,q_6,p_5), (p_3,p_1,p_4,p_2,p_5,q_3,q_6,q_2) \big) $$
The  configuration class of this pair is denoted by ${}_8 \pi_1$ in appendix $1$.
\end{example}

 \begin{figure}[h]
{\includegraphics[height=2cm]{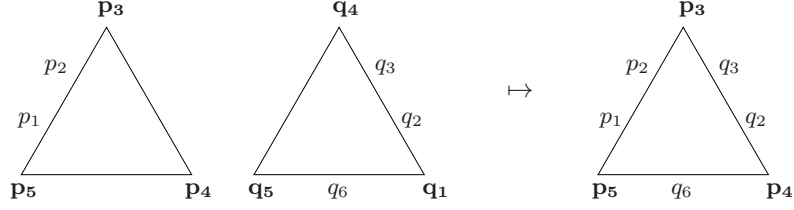}} 
\put(-255,62){\small $\mathbf{p_3}$}\put(-222,-7){\small $\mathbf{p_4}$}\put(-288,-7){\small $\mathbf{p_5}$}\put(-285,20){\small $p_1$}\put(-275,40){\small $p_2$}
\put(-166,62){\small $\mathbf{q_4}$}\put(-132,-7){\small $\mathbf{q_1}$}\put(-198,-7){\small $\mathbf{q_5}$}\put(-168,-7){\small $q_6$}  \put(-140,20){\small $q_2$} \put(-150,40){\small $q_3$}
\put(-35,62){\small $\mathbf{p_3}$}\put(-2,-7){\small $\mathbf{p_4}$}\put(-68,-7){\small $\mathbf{p_5}$}\put(-65,20){\small $p_1$}\put(-55,40){\small $p_2$}
\put(-38,-7){\small $q_6$}  \put(-10,20){\small $q_2$} \put(-20,40){\small $q_3$}
\put(-100,30){$\mapsto$} 
\caption{A graphical depiction of the left-hand factors in example $\ref{exampleprod}$: a dihedral structure can be depicted as a set of points around the triangle $t_i(T)$. The triangle on the right is obtained by superimposing  the two triangles on the left and identifying $p_3=q_4$, $p_5=q_5$, $p_4=q_1$.}
\end{figure}

Note that the multiplication laws on configurations are not unique:  two configurations can have  different representatives which 
multiply together in different ways.
\subsection{Multiplication of generalised cellular integrals}

\begin{lem}   \label{lemextend} Let $(\delta, \delta')$ be a pair of dihedral structures which is multipliable along $t: T \hookrightarrow S$, and let 
$a_{\delta_i,\delta_{i+1}}$, $b_{\delta'_{i},\delta'_{i+1}}$ denote parameters which are defined whenever 
$$\{\delta_i, \delta_{i+1} \} \not \subset t(T) \quad  (\hbox{resp. }   \{\delta'_i, \delta'_{i+1} \} \not \subset t(T) )$$
and satisfy the homogeneity equations
$$a_{\delta_{i-1} , \delta_i } + a_{\delta_i, \delta_{i+1}} = b_{\delta'_{j-1}, \delta'_j} + b_{\delta'_{j}, \delta'_{j+1}}  \hbox{ if   } \delta_i = \delta'_j\ ,
  $$
  whenever   all terms are defined. Then there is a unique way to define parameters $a_{\delta_i,\delta_{i+1}}$, $b_{\delta'_{i},\delta'_{i+1}}$  for all $i$ 
  such that  the homogeneity equations $(\ref{homequations})$ hold.

  Furthermore, if all parameters  $a_{ij}$ and $b_{ij}$ which were initially defined were equal to $N$, then the extended set of parameters are also all equal to $N$.
\end{lem}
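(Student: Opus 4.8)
The plan is to reduce the statement to a small explicit linear system. Write $p=t(1)$, $q=t(2)$, $r=t(3)$. Since $p,q,r$ are consecutive for $\delta$ (in that order) and $p,r$ are consecutive for $\delta'$, the edges of $\delta$ contained in $t(T)$ are exactly $\{p,q\}$ and $\{q,r\}$, while the only edge of $\delta'$ contained in $t(T)$ is $\{p,r\}$. I first record that $q$ is then not $\delta'$-adjacent to $p$ or to $r$: otherwise, together with the $\delta'$-consecutivity of $p,r$, the three points $p,q,r$ would be consecutive for $\delta'$ as well, forcing $D_{\{p,q,r\}}\in\delta_f\cap\delta_f'$, which is excluded for any convergent pair $(\delta,\delta')$ (the only ones to which the multiplication is applied). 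Hence the undefined parameters are precisely the three quantities $a_{p,q}$, $a_{q,r}$, $b_{p,r}$. The key first observation is that every homogeneity equation $(\ref{homequations})$ at a vertex $s\notin\{p,q,r\}$ involves only edges incident to $s$, none of which lies in $t(T)$; such equations involve only already-defined parameters and hold by hypothesis. Thus the only constraints left to impose are the three instances of $(\ref{homequations})$ at $p$, $q$ and $r$, and these are exactly the equations that must determine the three unknowns.

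Next I would write these three equations out. Let $p_-,r_+$ be the $\delta$-neighbours of $p,r$ lying outside the triangle, and $p_-',q_-',q_+',r_+'$ the relevant $\delta'$-neighbours. Then the equations at $q$, $p$, $r$ read
\begin{align}
 a_{p,q}+a_{q,r} &= b_{q_-',q}+b_{q,q_+'}, \nonumber\\
 a_{p_-,p}+a_{p,q} &= b_{p_-',p}+b_{p,r}, \nonumber\\
 a_{q,r}+a_{r,r_+} &= b_{p,r}+b_{r,r_+'}. \nonumber
\end{align}
In the situation above all the terms appearing besides $a_{p,q},a_{q,r},b_{p,r}$ are defined, so this is a linear system in $(a_{p,q},a_{q,r},b_{p,r})$ with constant right-hand side. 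Moving the unknowns to the left, its coefficient matrix is $\bigl(\begin{smallmatrix}1&1&0\\1&0&-1\\0&1&-1\end{smallmatrix}\bigr)$, whose determinant is $2\neq 0$. Therefore the system has a unique solution, which gives the existence and uniqueness of the extension satisfying all of $(\ref{homequations})$.

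For the \emph{furthermore} clause I would simply specialise: if every defined parameter equals $N$, the three right-hand sides become $2N$, $\,0$ and $\,0$ respectively (since $b_{p_-',p}-a_{p_-,p}=0$ and $b_{r,r_+'}-a_{r,r_+}=0$), and the unique solution is $a_{p,q}=a_{q,r}=b_{p,r}=N$. This also confirms that the extension is integral in the case of interest.

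The main obstacle is the bookkeeping that isolates exactly which three equations are \emph{not} automatically satisfied, together with the genuinely degenerate configuration in which $q$ is $\delta'$-consecutive to $p$ or $r$: there one acquires a fourth undefined parameter ($b_{p,q}$ or $b_{q,r}$) but still only three governing equations, and uniqueness would fail. The clean way I would dispose of this is the convergence argument above, showing that this degeneracy is precisely the statement $D_{\{p,q,r\}}\in\delta_f\cap\delta_f'$ and hence does not occur. A secondary subtlety is that the determinant $2$ means integrality for arbitrary integer input requires a parity condition on the defined parameters; this is automatic in the $N$-specialisation and, more generally, in the data inherited from the two factor configurations in the product construction, so it causes no difficulty in the applications.
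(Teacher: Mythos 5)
Your proof is correct and follows essentially the same route as the paper: after renumbering so that $t(T)=(1,2,3)$, the paper likewise reduces everything to the three homogeneity equations at the triangle vertices and solves them uniquely for $a_{1,2}$, $a_{2,3}$, $b_{3,1}$ (your explicit $3\times 3$ system with determinant $\pm 2$), the furthermore clause then following from uniqueness exactly as you argue. Your two supplementary observations are correct and go beyond the paper, which tacitly rules out the degeneracy by writing $\delta'=(1,q_{1r},\ldots,q_{2\ell},2,q_{2r},\ldots,q_{3\ell},3)$: if $t(2)$ were $\delta'$-adjacent to $t(1)$ or $t(3)$, a fourth unknown appears against the same three equations and uniqueness genuinely fails, so some non-degeneracy assumption is indeed needed (note only that the lemma as stated carries no convergence hypothesis and the multiplication proposition is formulated for arbitrary multipliable pairs, so your exclusion via $\delta_f\cap\delta_f'=\emptyset$ is best viewed as making explicit a tacit hypothesis of the lemma rather than deducing it from the stated ones), and the parity condition on integrality forced by the determinant $2$ is likewise a real caveat that the paper passes over.
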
 
\begin{proof} For simplicity, we can renumber labels so  that $t(T)=(1,2,3)$, and the dihedral orderings 
can be assumed to be  of the form $$\delta=( \ldots, p_{1\ell}, 1,2,3, p_{3r}, \ldots) \qquad \hbox{ and } 
\delta' = ( 1, q_{1r},\ldots, q_{2\ell}, 2, q_{2r}, \ldots, q_{3\ell},3)$$
The extra homogeneity equations which need to be satisfied are of the form
\begin{eqnarray}
a_{p_{1\ell},1} + a_{1,2}  & =&  b_{3,1} + b_{1, q_{1r}}  \nonumber \\
a_{1,2} + a_{2,3}  & =&  b_{q_{2\ell}2} + b_{2 q_{2r}}  \nonumber \\
a_{2,3} + a_{3,p_{3r}}  & =&  b_{q_{3\ell} 3} + b_{3 1}  \nonumber 
\end{eqnarray}
which can be uniquely solved for $a_{1,2}, a_{2,3}$ and $b_{3,1}$. 
\end{proof}
The lemma is clearly  true  if one replaces a pair of dihedral structures by their dual.

Now let $(\delta_1,\delta'_1)$ and $(\delta_2, \delta'_2)^{\vee}$ be two pairs of dihedral structures on $S_1, S_2$  and let $(\alpha, \alpha')$ denote their product with respect to $t_i:T_i \hookrightarrow S_i$  for $i=1, 2$ as defined above. Then $\alpha,\alpha'$ are dihedral  structures on $S = S_1 \cup_{t_1=t_2} S_2$.

Consider a set of parameters $\av, \bv$: 
$$a_{\alpha_i, \alpha_{i+1}}\in \Z \quad \hbox{ and } \quad b_{\alpha'_i, \alpha'_{i+1}}\in \Z$$
which satisfy the homogeneity equations $(\ref{homequations})$. Since the restrictions of $\alpha, \alpha'$ to $S_i$ 
are $\delta_i, \delta_i'$,  the restriction of the sets of parameters $\av,\bv$ to those terms whose indices lie in  $S_i$ satisfy the conditions of lemma \ref{lemextend}. 
Denote their extensions defined  in  the lemma by $\av_i,\bv_i$, for $i=1,2$.

\begin{prop}  With the above notations,
\begin{equation} I_{\alpha/\alpha'}( \av, \bv ) =   \pm  I_{\delta_1/\delta_1'}( \av_1, \bv_1 ) \, I_{\delta_2/\delta_2'}( \av_2, \bv_2 )
\end{equation} 
The  integral  on the left-hand side is finite if and only if  both integrals on the right-hand side are finite.

In particular, in the case when all parameters are equal to $N$, we obtain a multiplicative formula for basic cellular integrals
\begin{equation} I_{\alpha/\alpha'}(N) =  I_{\delta_1/\delta_1'}(N) I_{\delta_2/\delta_2'}(N) \quad \hbox{ for all } N  \geq 0 \ .
\end{equation}
\end{prop}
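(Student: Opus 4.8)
The plan is to deduce everything from the product formula $(\ref{genproduct})$ applied to the two cellular integrands $\omega_i = f_{\delta_i/\delta_i'}(\av_i,\bv_i)\,\omega_{\delta_i'}$ on $\Mod_{0,S_i}$, reducing the proposition to two claims: (A) the multipliability hypotheses force the preimage $m^{-1}(\Dom_{\delta_1}\times\Dom_{\delta_2})$ to be the \emph{single} cell $\Dom_{\alpha}$, and (B) under the open immersion $m$ one has an identity of rational forms $m^*(\omega_1\otimes\omega_2) = \pm\, f_{\alpha/\alpha'}(\av,\bv)\,\omega_{\alpha'}$. Granting both, $(\ref{genproduct})$ gives $I_{\delta_1/\delta_1'}(\av_1,\bv_1)\,I_{\delta_2/\delta_2'}(\av_2,\bv_2) = \int_{\Dom_\alpha} \pm\, f_{\alpha/\alpha'}(\av,\bv)\,\omega_{\alpha'} = \pm\, I_{\alpha/\alpha'}(\av,\bv)$, as required.

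For (A) I would argue with arcs of the common triangle $T=\{t(1),t(2),t(3)\}$. By Definition $\ref{defnmultipliable}$, $t(1),t(2),t(3)$ are consecutive in $\delta_1$, so all of $S_1\setminus T$ lies in the single arc between $t(1)$ and $t(3)$ \emph{not} containing $t(2)$; dually, multipliability of $(\delta_2,\delta_2')^{\vee}$ forces $t(1),t(3)$ to be consecutive in $\delta_2$, so $S_2\setminus T$ avoids exactly that arc. The two groups of points therefore occupy disjoint arcs, leaving no freedom to interleave them, so there is a unique dihedral structure on $S$ restricting to $\delta_1$ on $S_1$ and $\delta_2$ on $S_2$, namely $\alpha$. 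Since the identity $m^{-1}(\Dom_{\delta_1}\times\Dom_{\delta_2})=\bigcup_{\delta}\Dom_\delta$ preceding $(\ref{genproduct})$ ranges over exactly these restrictions, it collapses to $\Dom_\alpha$.

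For (B) I would compute on configuration space $(\Pro^1)^S_*$, fixing the three points of $T$, and use that the invariant volume form makes each $\omega_{\delta_i'}=\widetilde{\omega}_{\delta_i'}/v$ acquire the Vandermonde factor $\Delta=(z_{t(1)}-z_{t(2)})(z_{t(2)}-z_{t(3)})(z_{t(1)}-z_{t(3)})$. The combinatorial input is an edge decomposition: since $\alpha$ has no edge inside $T$ while $\delta_1$ supplies $\{t(1),t(2)\},\{t(2),t(3)\}$ and $\delta_2$ supplies $\{t(1),t(3)\}$, one checks directly from the arc structure in (A) that $E(\alpha)=\big(E(\delta_1)\setminus\{t(1)t(2),t(2)t(3)\}\big)\sqcup\big(E(\delta_2)\setminus\{t(1)t(3)\}\big)$, and dually $E(\alpha')=\big(E(\delta_1')\setminus\{t(1)t(3)\}\big)\sqcup\big(E(\delta_2')\setminus\{t(1)t(2),t(2)t(3)\}\big)$. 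The two factors of $\Delta$ from $\omega_{\delta_1'}\wedge\omega_{\delta_2'}$ cancel precisely the three omitted denominator edges, yielding the single $\Delta$ of $\omega_{\alpha'}$ and hence $\omega_{\delta_1'}\wedge\omega_{\delta_2'}=\pm\,\omega_{\alpha'}$. Since $f_{\delta_i/\delta_i'}$ is $\PGL_2$-invariant of degree $0$ it simply restricts, and the same edge bookkeeping shows that $\widetilde f_{\delta_1/\delta_1'}(\av_1,\bv_1)\,\widetilde f_{\delta_2/\delta_2'}(\av_2,\bv_2)$ and $\widetilde f_{\alpha/\alpha'}(\av,\bv)$ differ only by the three triangle factors $(z_{t(1)}-z_{t(2)})^{a^{(1)}_{t(1)t(2)}-b^{(2)}_{t(1)t(2)}}$, $(z_{t(2)}-z_{t(3)})^{a^{(1)}_{t(2)t(3)}-b^{(2)}_{t(2)t(3)}}$ and $(z_{t(1)}-z_{t(3)})^{a^{(2)}_{t(1)t(3)}-b^{(1)}_{t(1)t(3)}}$.

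The crux, and the step I expect to be the main obstacle, is showing these three exponents all vanish; none is individually forced by either the $S_1$- or the $S_2$-homogeneity equations alone. Here I would use that the extensions $\av_i,\bv_i$ satisfy the $S_i$-equations $(\ref{homequations})$ by Lemma $\ref{lemextend}$, while the global $\av,\bv$ satisfy $(\ref{homequations})$ on $(\alpha,\alpha')$ and agree with $\av_i,\bv_i$ on every non-triangle edge. Adding the $S_1$- and $S_2$-equations at each vertex $t(1),t(2),t(3)$ and subtracting the global equation there, all shared non-triangle terms cancel, leaving at each vertex that a sum of two of the three mismatches is zero; the resulting $3\times3$ homogeneous system has only the trivial solution, so all three triangle factors cancel and $m^*(\omega_1\otimes\omega_2)=\pm\, f_{\alpha/\alpha'}(\av,\bv)\,\omega_{\alpha'}$. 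Finally, $m$ restricts to a diffeomorphism $\Dom_\alpha\xrightarrow{\sim}\Dom_{\delta_1}\times\Dom_{\delta_2}$, so applying Tonelli to the nonnegative integrands $\lvert f^{\,}\omega\rvert$ factorizes the absolute integral and gives the finiteness equivalence; in the case where all parameters equal $N$, Lemma $\ref{lemextend}$ makes every extended parameter equal $N$, the $\pm$ becomes $+$ since the basic cellular integrals are defined with absolute values, and we obtain $I_{\alpha/\alpha'}(N)=I_{\delta_1/\delta_1'}(N)\,I_{\delta_2/\delta_2'}(N)$.
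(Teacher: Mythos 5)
Your proof is correct, and its skeleton coincides with the paper's: both invoke the product formula $(\ref{genproduct})$, identify $m^{-1}(\Dom_{\delta_1}\times\Dom_{\delta_2})$ with the single cell $\Dom_{\alpha}$ via the uniqueness of $\alpha$ (which the paper takes directly from the definition of the product of configurations, while you re-derive it by the arc argument), and reduce everything to the pullback identity $m^*\big(f_{\delta_1/\delta_1'}(\av_1,\bv_1)\,\omega_{\delta_1'}\otimes f_{\delta_2/\delta_2'}(\av_2,\bv_2)\,\omega_{\delta_2'}\big)=\pm\, f_{\alpha/\alpha'}(\av,\bv)\,\omega_{\alpha'}$. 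Where you genuinely diverge is in how that identity is established. For the forms, the paper simply cites \cite{BCS}, Proposition 2.19, whereas you give a direct proof by Vandermonde bookkeeping: the two factors $\Delta$ produced by the $\PGL_2$-reduction cancel against precisely the three triangle edges by which the edge sets of $\delta_1'$ and $\delta_2'$ together exceed that of $\alpha'$. For the functions, the paper never proves that the triangle exponents vanish: it works with the descended functions, places $t(T)$ at $(0,1,\infty)$, and argues that every factor $(z_i-z_j)$ with both indices in the triangle drops out in the limit (the $(0,1)$-factor contributes a sign, and the factors involving $\infty$ are absorbed by degree-zero homogeneity in that variable), so the two restrictions agree up to sign. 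You instead prove the stronger exact identity of $\PGL_2$-invariant functions on configuration space, by showing the three exponent mismatches on $\{t(1),t(2)\}$, $\{t(2),t(3)\}$, $\{t(1),t(3)\}$ vanish; your linear system (the sum of the $S_1$- and $S_2$-homogeneity equations at each of $t(1),t(2),t(3)$ minus the global equation $(\ref{homequations})$ for $(\alpha,\alpha')$ there, which says each pairwise sum of the three mismatches is zero) is correct and forces all three to vanish. In fact it encodes exactly the degree-zero homogeneity in $z_{t(1)},z_{t(2)},z_{t(3)}$ that justifies the paper's ``drop out'' step, so the two arguments carry the same content: yours is self-contained and explicit at the cost of bookkeeping, the paper's is shorter but leans on \cite{BCS} and on the reader granting the limit recipe. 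Finally, your Tonelli argument for the equivalence of finiteness (together with the remark that the absolute values in $(\ref{INdefn})$ remove the sign when all parameters equal $N$) addresses a point on which the paper's proof is silent, since $(\ref{genproduct})$ is only asserted when all terms converge; this is a small but genuine gain in completeness.
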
 

\begin{proof} The proof is an application of the product formula $(\ref{genproduct})$.
Let  $$m: \Mod_{0,S} \To \Mod_{0,S_1} \times \Mod_{0,S_2}$$ be the product map.
By construction, $\alpha$ is the unique dihedral  structure on $S$ which restricts to $\delta_1$ and $\delta_2$, so the domain of integration is  $m^{-1}( \Dom_{\delta_1} \times \Dom_{\delta_2}) = \Dom_{\alpha}$.

Therefore it suffices to show that
$$m^*(  f_{\delta_1 /\delta_1'}(\av_1,\bv_1) \omega_{\delta_1'}   \otimes   f_{\delta_2 /\delta_2'}(\av_2,\bv_2)  \omega_{\delta_2'})  =  \pm \, f_{\alpha /\alpha'}(\av,\bv)   
 \omega_{\alpha'}\ .$$
The case when all parameters are equal to zero is the identity
$m^*(\omega_{\delta'_1}\otimes \omega_{\delta'_2}) = \pm \omega_{\alpha'}$  
 and follows from \cite{BCS}, proposition 2.19, since $\alpha'$ is the unique dihedral structure which restricts to $\delta'_1, \delta_2'$. It remains to show that 
 \begin{equation}\label{mstarf} 
 m^*(  f_{\delta_1 /\delta_1'}(\av_1,\bv_1) \otimes   f_{\delta_2 /\delta_2'}(\av_2,\bv_2)  )  =  \pm \, f_{\alpha /\alpha'}(\av,\bv)   \ .
 \end{equation} 
 To see this, denote the marked points of $S_1$ by $x_1,\ldots, x_r$ 
 and $S_2$ by $y_1,\ldots, y_s$.  Using $\PGL_2$, we can place the elements of $t_1(T)\subset S$ at $0,1,\infty$, and so 
 $(x_1,x_2,x_3) = (0,1,\infty) = (y_1,y_2,y_3)$.   The expression
 $f_{\delta_1 /\delta_1'}(\av_1,\bv_1) \otimes   f_{\delta_2 /\delta_2'}(\av_2,\bv_2)  $  is given by  the limit, as both  $(x_1,x_2,x_3)$ and $(y_1,y_2,y_3)$ tend to $(0,1,\infty)$, of 
 $$  \prod_{i\in \Z/r\Z}   { (x_{\delta_1(i)} -x_{\delta_1(i+1)})^{a_{\delta_1(i), \delta_1(i+1)}} \over 
 (x_{\delta'_1(i)} -x_{\delta'_1(i+1)})^{b_{\delta'_1(i), \delta'_1(i+1)}} }  \times \prod_{i\in \Z/s\Z}   { (y_{\delta_2(i)} -y_{\delta_2(i+1)})^{a_{\delta_2(i), \delta_2(i+1)}} \over 
 (y_{\delta'_2(i)} -y_{\delta'_2(i+1)})^{b_{\delta'_2(i), \delta'_2(i+1)}} } $$
since  terms of the form  $(x_i-x_j)^n$ or $(y_i-y_j)^n$ where $i,j\in \{1,2,3\}$, will drop out in the limit.
Denote the marked points of $S$ by $(z_1,\ldots,  z_{r+s-3}). $ 
The map $m$ sends
$$(z_1,\ldots, z_{r+s}) \mapsto (z_1,\ldots, z_r) \times (z_1,z_2,z_3, z_{r+1},\ldots, z_{r+s})$$
By definition of the dihedral structures  $\alpha, \alpha'$  the left-hand side of $(\ref{mstarf})$  is exactly the limit as $(z_1,z_2,z_3)$
tends to $(0,1,\infty)$ of the following expression
 $$   \prod_{i\in \Z/(r+s-3)\Z}   { (z_{\alpha_i} -z_{\alpha_{i+1}})^{a_{\alpha_i, \alpha_{i+1}}} \over 
 (z_{\alpha'_{i}} -z_{\alpha'_{i+1}})^{b_{\alpha'_{i}, \alpha'_{i+1}}} }  $$
 which is  $f_{\alpha /\alpha'}(\av,\bv)$ up to a  sign. \end{proof}

 \section{Linear forms in odd zeta values} \label{sectOddZ}
 
The  Ball-Fischler-Rivoal integrals, which can be  used \cite{B-R, RiCRAS,  FiBourbaki} to prove that the $\Q$-vector space generated by odd zeta values is infinite
dimensional, are a  special case of  a certain family of generalised  cellular integrals. 

\begin{defn}Let $m\geq 3$. Consider the family of  convergent configurations defined by  the equivalence class
of permutations
\begin{equation}  \label{piodd} \pi_{odd}^{m}   =  (2m , 2, 2m-1, 3, 2m-2, 4, \ldots, m, 1, m+1)
\end{equation} 
\end{defn} 
  
\begin{prop} Let $1\leq r<m$. With the special choice of parameters 
$$ a_{m,m+1} = a_{2m,1} = b_{m+1,2m} = b_{m,1} = rn $$
and setting all other parameters $a, b$ equal to $n$, the generalised cellular integrals
$I(\av,\bv)$ coincide with the family of integrals $(\ref{AIRiv2})$ where  $a=2m$. 

In particular, the basic cellular integrals for $\pi^m_{odd}$ correspond to the case $r=1$.
\end{prop}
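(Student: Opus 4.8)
The plan is to make the generalised cellular integrand $f_{\sigma}(\av,\bv)\,\omega_{\sigma}$ attached to $\sigma=\pi^m_{odd}$ completely explicit in simplicial, and then cubical, coordinates, and to match it with the integral $(\ref{AIRiv2})$ of Appendix~1 (with $a=2m$) after an explicit birational change of variables. The template is already worked out in \S\ref{sectGen}: indeed $\sigma=(8,2,7,3,6,4,1,5)$ of \S\ref{sectMo8ex} is precisely $\pi^4_{odd}$, and the case $\sigma=(1,4,2,6,3,5)$ is the $\zeta(3)$ specialisation treated there, so the general argument is the $m$-fold extrapolation of those computations. First I would record $\sigma$ in cyclic order,
$$\sigma=\pi^m_{odd}=(2m,\,2,\,2m-1,\,3,\,2m-2,\,4,\,\ldots,\,m,\,1,\,m+1),$$
noting that the even positions carry the ascending run $2,3,\ldots,m+1$ and the odd positions the descending run $2m,2m-1,\ldots$ terminated by the entry $1$. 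The numerator chords of $\widetilde f_{\sigma}(\av,\bv)$ are the consecutive pairs of $\delta^0$ and the denominator chords the consecutive pairs of $\sigma$; among them the four distinguished chords bearing the exponent $rn$ are $(z_m-z_{m+1})$ and $(z_{2m}-z_1)$ in the numerator and $(z_{m+1}-z_{2m})$ and $(z_m-z_1)$ in the denominator, every other exponent being $n$.

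Next I would check that this assignment lies in the parameter space, i.e.\ satisfies the homogeneity equations $(\ref{homequations})$. The equation attached to a label $v$ equates the sum of the two $a$-exponents incident to $v$ in $\delta^0$ with the sum of the two $b$-exponents incident to $v$ in $\sigma$. If $v\notin\{1,m,m+1,2m\}$ then none of its four incident chords is distinguished, so both sides equal $2n$; at each of the four special labels exactly one incident numerator chord and exactly one incident denominator chord are distinguished, so both sides equal $n+rn$. Hence $(\ref{homequations})$ holds at every label. Setting $r=1$ makes all exponents equal to $n$, which is exactly the basic cellular integral $(\ref{INdefn})$ with $N=n$; this proves the last sentence of the proposition. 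The restriction $1\le r<m$ is what guarantees convergence through the inequalities $(\ref{OFD})$ of Proposition~\ref{prop4.2}, and matches the admissible range of $r$ in $(\ref{AIRiv2})$.

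I would then substitute $(z_1,\ldots,z_{2m})=(0,t_1,\ldots,t_\ell,1,\infty)$, discarding the two chords meeting $z_{2m}=\infty$ (the numerator chord $(z_{2m}-z_1)$ and the denominator chord $(z_{m+1}-z_{2m})$, which formally bear the exponent $rn$). In simplicial coordinates the numerator becomes $t_1^{n}(1-t_\ell)^{n}\prod_{k}(t_k-t_{k+1})^{e_k}$, where every $e_k=n$ save for the single middle factor $(t_{m-1}-t_m)^{rn}$; the surviving special denominator chord is $(z_m-z_1)=t_{m-1}$, contributing $t_{m-1}^{rn}$, while by Lemma~\ref{lempolesform} the form $\omega_{\sigma}$ supplies the remaining ``crossing'' denominator chords together with $dt_1\cdots dt_\ell$, exactly as in the explicit expression for $\omega_{\sigma}$ displayed in \S\ref{sectMo8ex}. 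Passing to cubical coordinates $t_i=x_i\cdots x_\ell$ turns the crossing chords into hyperbola factors $1-x_i\cdots x_j$.

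The hard part is the final matching. The monomial factors $x_i$ and $1-x_i$ carry exponents that track the $a_{i,i+1}$ and so transform transparently, and the parameter $r$ survives into the corresponding parameter of $(\ref{AIRiv2})$; the delicate bookkeeping is to confirm that the specific collection of factors $1-x_i\cdots x_j$ produced by the crossing chords of $\sigma$, after a change of variables of the type $(\ref{ChangeofVarRV3})$, is exactly the one occurring in $(\ref{AIRiv2})$, with matching Jacobian. I would organise this invariantly: both integrands are $\PGL_2$-invariant rational $\ell$-forms on $\Mod_{0,2m}$ with poles only along boundary divisors, so it suffices to compare their divisors of zeros and poles together with a single normalising constant, rather than to manipulate the two integrands by hand. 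This reduces the claim to the divisor computation of Proposition~\ref{prop4.2}, carried out uniformly in $m$.
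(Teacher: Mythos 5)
Your preliminaries are correct and essentially agree with what the paper leaves implicit: the placement of the four distinguished chords, the verification of the homogeneity equations $(\ref{homequations})$ at the labels $1,m,m+1,2m$ (both sides equal $2n$ at ordinary labels and $(r+1)n$ at the special ones), the remark that $r=1$ recovers the basic cellular integral, the convergence range, and the passage to simplicial and then cubical coordinates. The gap is that you defer precisely the step that \emph{is} the paper's proof. The paper factors the integrand as $f_\pi^n\, g^{rn}\,\omega_\pi$, where $f_\pi$ is the basic cellular integrand and $g$ is the $\PGL_2$-invariant cross-ratio $g=\frac{(z_m-z_{m+1})(z_{2m}-z_1)}{(z_{m+1}-z_{2m})(z_m-z_1)}$, computes $f_\pi$, $g$, $\omega_\pi$ explicitly in cubical coordinates, and then constructs an explicit two-stage change of variables (with $p=m-1$): first $x_p=1-s_{2p-1}$, $x_i=(s_{2i-1}-1)/(s_{2i}-1)$, $x_{p+i}=(s_{2p-2i}-1)/(s_{2p-2i+1}-1)$ for $1\le i<p$, then $s_i=y_1\cdots y_i$; it then verifies the transformed expressions for $f_\pi$, $g$, $\omega_\pi$ and checks that the composite map is a homeomorphism of the unit hypercube onto itself. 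Producing this map is the entire content of the proposition; your proposal only asserts that a change of variables ``of the type $(\ref{ChangeofVarRV3})$'' exists, which begs the question.

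The invariant shortcut you propose does not fill this hole, for three reasons. First, the two integrands, regarded as rational $\ell$-forms on $\Mod_{0,2m}$ through their respective coordinate presentations, do \emph{not} have equal divisors: the cellular integrand has poles along the divisors at finite distance with respect to $\pi^m_{odd}\,\delta^0$, which in cubical coordinates produce paired factors $(1-x_i\cdots x_{2p-i})(1-x_{i+1}\cdots x_{2p-i})$, whereas the integrand of $(\ref{AIRiv2})$ has poles only along the loci $1-y_1\cdots y_{2j}$ and $1-y_1\cdots y_{a-1}$. The disagreement of divisors is exactly why a non-trivial birational map must be exhibited before any divisor comparison is meaningful; arguing that two forms are proportional because their divisors agree presupposes the very map that makes the divisors agree. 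Second, your reduction ``to the divisor computation of Proposition \ref{prop4.2}'' is incorrect: that proposition yields inequalities (lower bounds on orders of vanishing) used to establish convergence, not the exact divisor of the integrand, and it says nothing about $(\ref{AIRiv2})$ or about any change of variables. Third, divisor data see nothing of the domain of integration: even granted a map matching the forms up to a nonzero constant, one must still show that it carries the cell $\Dom_{\delta^0}$ (the unit hypercube in the $x$-coordinates) homeomorphically onto $[0,1]^{2p-1}$ in the $y$-coordinates, and one must pin the constant down to $\pm 1$; the paper addresses both points explicitly, and neither follows from a comparison of divisors.
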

\begin{proof} 
Set $p=m-1$, and $\pi= \pi^m_{odd}$.
 The integrand corresponding to this configuration is
 $  f_{\pi}^n g^{rn} \omega_{\pi}$
 where $f_{\pi}$ is the basic cellular integrand (all parameters equal to $1$), and $g$ is the function 
  represented by the cross-ratio
$$g={ (z_m-z_{m+1})(z_{2m}-z_1) \over  (z_{m+1}-z_{2m})(z_m-z_1)     } \ .$$
Writing  these  in cubical coordinates $x_1,\ldots, x_{2p-1}$ gives
 $$f_{\pi} = { \prod_{i=1}^{p-1} (x_i\ldots x_{2p-i}) \prod_{i=1}^{2p-1} (1-x_i) \over 
 \prod_{i=1}^{p-1}    ( 1- x_i \ldots x_{2p-i} )(1- x_{i+1} \ldots x_{2p-i} )} \quad , \quad g = {x_p-1 \over x_p}\ . $$
 Perform the following   change of variables in two stages. First set  $x_{p}= 1-s_{2p-1}$, and  
$$
x_i = { s_{2i-1} - 1 \over s_{2i}-1} \quad  \ , \quad   x_{p+i} = { s_{2p-2i} - 1 \over s_{2p-2i+1}-1} \quad \hbox{ for }  \ 1\leq i <p \ .$$
Next perform the  change of variables $s_i = y_1\ldots y_i$ for $1\leq i \leq 2p-1$. One easily verifies that this  gives the integral $(\ref{AIRiv2})$. The details are somewhat tedious and are omitted, but 
 one  checks that  in the new variables is:
$$f_{\pi} = {\prod_{i=1}^{2p-1}  y_i(1-y_i)  \over (1-y_1\ldots y_{2p-1}) \prod_{i=1}^{p-1} (1-y_1\ldots y_{2i})}  \qquad , \qquad 
g=  {  y_1\ldots y_{2p-1}   \over 1- y_1 \ldots y_{2p-1}}$$
The differential form  $\omega_{\pi} $ can be computed similarly and becomes
$$ {   dy_1 \ldots dy_{2p-1}  \over   (1-y_1\ldots y_{2p-1}) \prod_{i=1}^{p-1} (1-y_1\ldots y_{2i})}  $$
Finally, one must check that the above change of variables defines a homeomorphism of the unit hypercube $\{(x_1,\ldots, x_{2p-1}): 0\leq x_i \leq1\} $
with $\{(y_1,\ldots, y_{2p-1}): 0\leq y_i \leq1\} $.  

The generalised cellular integrals for the above choice of parameters is then 
$$\int_{[0,1]^{2p-1}} f_{\pi}^n g^{rn} \, \omega_{\pi} $$
which coincides with the family of integrals $(\ref{AIRiv2})$.
\end{proof}
In particular, by \cite{RiCRAS, B-R} this family of integrals yield linear forms in odd zeta values
$1$, $\zeta(3)$,  \ldots ,  $\zeta(2m-3)$. 
It is highly likely that the same holds for $I_{\pi^m_{odd}}(\av,\bv)$ (including the example of \S\ref{sectMo8ex}), for any convergent  values of the parameters.

\begin{rem}  The evidence suggests that these families of integrals have   symmetry groups  and identities of hypergeometric type generalising
those discovered by Rhin and Viola.  It would be interesting to study these groups with a view to applying  the group method 
of Rhin and Viola to linear forms in odd zeta values. \end{rem}

\begin{prop}  Let $m\geq 2$. The generalised cellular integrals corresponding to  the sequence of convergent configurations
\begin{equation} 
\pi^{m}_{even} =   (2m+1 , 2, 2m, 3, 2m-1, 4, \ldots, m+2, 1, m+1)
\end{equation} 
 with parameters given by 
$$ a_{1,2m+1} = a_{m+1,m+2} = b_{m+1, 2m+1} = b_{1,m+2} = rn$$
and all other parameters equal to $n$,  are  equal to the family  $(\ref{AIRiv2})$ where $a=2m-1$.
\end{prop}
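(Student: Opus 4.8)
The plan is to follow the proof of the preceding ($\pi^m_{odd}$) proposition step by step, the only changes being the combinatorial bookkeeping forced by passing from $n=2m$ to $n=2m+1$ marked points. Write $\pi=\pi^m_{even}$ and set $p=m-1$, so that $\ell=n-3=2m-2=2p$. First I would identify the generalised integrand attached to the special parameters. As in the odd case, taking all exponents equal to $n$ recovers the basic integrand $f_\pi^n\,\omega_\pi$ (here $f_\pi$ is the basic cellular function, all exponents $1$), and raising the four distinguished edges from $n$ to $rn$ amounts to multiplying by a power of the cross-ratio
$$g=\frac{(z_{m+1}-z_{m+2})(z_{2m+1}-z_1)}{(z_{2m+1}-z_{m+1})(z_1-z_{m+2})}$$
on the four points $z_1,z_{m+1},z_{m+2},z_{2m+1}$: its numerator carries the distinguished $a$-edges $a_{m+1,m+2},a_{1,2m+1}$ and its denominator the distinguished $b$-edges $b_{m+1,2m+1},b_{1,m+2}$. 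Thus the integrand is $f_\pi^n\,g^{(r-1)n}\,\omega_\pi$, which returns the basic cellular integral $(\ref{INdefn})$ at $r=1$. Before anything else I would check that the special assignment satisfies the homogeneity equations $(\ref{homequations})$: at each of the four vertices $1,m+1,m+2,2m+1$ exactly one incident $a$-edge and one incident $b$-edge is raised, so both sides of the balance increase by $(r-1)n$, while at every other vertex the equations remain the untouched identity.

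Next I would pass to cubical coordinates $x_1,\ldots,x_{2p}$ via $(\ref{cubechangeofvars})$, using the explicit simplicial expressions for $\omega_\delta$ and for $f_{\delta^0/\delta}$ recorded earlier. A direct computation gives $g=1-x_m$ up to sign (so that $g$ depends on one of the two central variables $x_{m-1},x_m$, in contrast to the single central variable appearing in the odd case), and I expect $f_\pi$ and $\omega_\pi$ to become ratios of the linear factors $x_i$, $1-x_i$ and $1-x_i\cdots x_j$, the even analogues of the product formulas displayed in the odd proof. The interleaving structure of $\pi^m_{even}$, obtained by weaving the descending run $2m+1,2m,\ldots,m+2$ through the ascending run $2,3,\ldots$ and closing with $1,m+1$, produces the same telescoping pattern of factors $1-x_i\cdots x_{2p+1-i}$, now indexed by $2p=2m-2$.

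I would then carry out the two-stage substitution exactly as in the odd case: a M\"obius-type change relating the $x_i$ to intermediate variables $s_i$, followed by the telescoping change $s_i=y_1\cdots y_i$, arranged so that in the $y$-coordinates $f_\pi$, $g$ and $\omega_\pi$ acquire precisely the shape of the Ball--Rivoal integrand. Comparison with Appendix~$1$ then identifies the result with the family $(\ref{AIRiv2})$ at $a=2m-1$, provided one also verifies that the composite substitution is a homeomorphism of $[0,1]^{2p}$ onto itself, so that the simplex $\Dom_n$ is carried onto the unit cube.

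The one genuine departure from the odd proof, and the step I expect to be the main obstacle, is the first stage of this change of variables. With $n$ odd the number of cubical variables $\ell=2m-2$ is even, so there is a central \emph{pair} $x_{m-1},x_m$ rather than the single central variable that received the special substitution in the $\pi^m_{odd}$ case, and every index range is shifted by one. Setting up the central substitution correctly --- so that the integrand lands on the exact Rivoal form and the cube maps onto the cube --- is where the work lies; once it is pinned down, checking that the transformed integrand equals $(\ref{AIRiv2})$ with $a=2m-1$ and confirming the volume form is routine. This is exactly the computation the odd proof calls ``somewhat tedious'' and omits, and the same remark applies here.
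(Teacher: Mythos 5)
Your preliminaries are correct, and they coincide with what the paper does (or leaves implicit): for the stated parameters the integrand is indeed $f_\pi^{n}\,g^{(r-1)n}\,\omega_\pi$, where $g$ is the cross-ratio on $z_1,z_{m+1},z_{m+2},z_{2m+1}$ built from the four distinguished edges; the homogeneity equations $(\ref{homequations})$ hold because each of the four vertices $1,m+1,m+2,2m+1$ meets exactly one raised $a$-edge and one raised $b$-edge; and in cubical coordinates $g=\pm(1-x_m)$. (Your exponent $(r-1)n$ is in fact the consistent reading: it returns the basic cellular integral at $r=1$ and matches the parameter $r$ of $(\ref{AIRiv2})$; the $g^{rn}$ written in the paper's proof of the odd case is an off-by-one slip.) Your overall strategy --- cubical coordinates, then a M\"obius-type substitution followed by the telescoping substitution $s_i=y_1'\cdots y_i'$ --- is also exactly the paper's.

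The genuine gap is the step you explicitly postpone: you never produce the first-stage change of variables, and that substitution \emph{is} the proof; everything else, as you yourself note, is routine verification. The paper's entire argument consists of exhibiting it:
$$x_m = 1-s_{2m-2}\ ,\qquad x_i = \frac{s_{2i-1}-1}{s_{2i}-1}\ \ (1\le i<m)\ ,\qquad x_{m+i} = \frac{s_{2m-2i}-1}{s_{2m-2i+1}-1}\ \ (1<i\le m)\ ,$$
followed by $s_i=y_1'\cdots y_i'$ (the printed index range in the second block contains typos, but the pattern mirroring the odd case is clear). In particular, it is the distinguished variable $x_m$ --- the one your computation $g=1-x_m$ singles out --- that receives the special substitution, which resolves the ``two central variables'' ambiguity you raise and sends $g$ to $s_{2m-2}=y_1'\cdots y_{2m-2}'$. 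Without this formula in hand one cannot check that the transformed integrand is $(\ref{AIRiv2})$ with $a=2m-1$ rather than some other member of the Rhin--Viola/Fischler families built from the same linear factors, nor that $[0,1]^{2m-2}$ is carried homeomorphically onto itself. So the proposal is a correct plan that follows the paper's route, but it omits precisely the point where the mathematical content of the proof resides.
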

\begin{proof} Write the integrand corresponding to this configuration in cubical coordinates $x_1,\ldots, x_{2m-2}$ .
Perform the change of variables, $x_{m}= 1-s_{2m-2}$, and  
$$
x_i = { s_{2i-1} - 1 \over s_{2i}-1} \quad \hbox{ for }  \ 1\leq i <m   \ , \quad   x_{m+i} = { s_{2m-2i} - 1 \over s_{2m-2i+1}-1} \quad \hbox{ for }  \ 1< i \leq m \ .$$
A final change of variables $s_i = y'_1\ldots y'_i$ gives the integral $(\ref{AIRiv2})$.
\end{proof}
This family seems to  yield linear forms in even zeta values
$1$, $\zeta(2)$, \ldots, $\zeta(2m-2)$
 for all values of the parameters.
Note that there are many other families with an (apparently) similar property  such as the following family for all $n\geq 2$:
$$ (2n+1, n, 2n-1, n-1,  \ldots, 2, n+2, 1, n+1) $$
It would be interesting to know if they can be used to improve on the presently known transcendence measures for $\pi^2$.

\subsection{The dual linear forms} \label{sectdualforms}
The generalised cellular integrals of the configurations  $ \big(\pi_{odd}^{m} \big)^{\vee} $ which are dual to $(\ref{piodd})$ experimentally produce linear forms in 
$$1, \zeta(2), \ldots , \zeta(2m-6), \zeta_{2m-3}$$
where $\zeta_{2m-3}$ is a polynomial in odd zeta values and even powers of $\pi$ of weight $2m-3$.
As discussed in \S\ref{sectCohom}, we can define motivic versions of the generalised cellular integrals taking values in motivic multiple zeta values.
It now makes perfect sense to project the $\zetam(2)$ to zero, yielding linear forms in $1$ and $\zetam(2m-3)$ only. Taking the period gives linear forms 
in $1$ and $\zeta(2m-3)$. These linear forms are often  small.

\begin{example} Consider  the case  $m=4$, denoted ${}_8 \pi^{\vee}_8$ in Appendix 1. Then
$$\omega_{ {}_8 \pi^{\vee}_8} = {dt_1\ldots dt_5 \over   (t_{1}-t_{3})t_{3}(1-t_{4})(t_{4}-t_{2})(t_{2}-t_{5}) } $$
and an example of a generalised cellular integral is:
$$\int_{\Dom_{8}}{ t_{1}^8(t_{1}-t_{2})^8(t_{2}-t_{3})^8(t_{3}-t_{4})^7(t_{4}-t_{5})^8(t_{5}-1)^8 \over (t_{1}-t_{3})^6t_{3}^9(1-t_{4})^9(t_{4}-t_{2})^6(t_{2}-t_{5})^{10}} \omega_{ {}_8 \pi^{\vee}_8} = a_0 + a_1 \zeta(2) + a_2 \zeta_5$$
where $\zeta_5 = 2 \zeta(2) \zeta(3) + \zeta(5)$ and $a_0,a_1,a_2 \in \Q$. Either by computing with motivic multiple zeta values, or working with relative cohomology classes, one can ensure the coefficients $a_i$  are well-defined. We obtain using \cite{PanzerProg} a linear form 
$a_0 + a_2 \zeta(5) $ where
$$ a_0=  -{48144548550856003417243773593 \over 19289340000} \ , \ a_2 = { 2407028604043866880} $$
The  $\Z$-linear form obtained by clearing denominators is less than $1$, which is what is required for an irrationality proof.  There are many similar examples. An infinite family of such examples
would suffice to prove the irrationality of $\zeta(5)$.
\end{example}

 \section{Cohomology} \label{sectCohom}
  
 A proper understanding of  problem $(4)$  seems to require cohomological and motivic methods.
For this reason, I include a brief  discussion  of these ideas.
 
 \subsection{Moduli space motives}
The integrals $(\ref{IntM0n})$ are  periods of the motives considered in \cite{GM}.
For $|S|\geq4$   let $A,B \subset \overline{\Mod}_{0,S}$
be a pair of  boundary divisors such that $A$ and $B$ have no common irreducible components.  Let $\ell = |S|-3$ and define
$$m(A,B) = H^{\ell}( \overline{\Mod}_{0,S} \backslash A, B \backslash (B \cap A))$$
in the category $\MT(\Z)$ of mixed Tate motives over $\Z$. In particular, it has a de Rham realisation $m(A,B)_{dR}$ which is a finite dimensional graded vector space over $\Q$, and a Betti realisation $m(A,B)_B$ which is a finite dimensional vector space over $\Q$, equipped with an increasing weight filtration $W$. 
There is a comparison isomorphism
$$ \mathrm{comp}_{B, dR} : m(A,B)_{dR} \otimes_{\Q} \C \overset{\sim}{\To} m(A,B)_B \otimes_{\Q}\C$$
which is compatible with weight filtrations, where the weight filtration on $m(A,B)_{dR}$ is the filtration associated to its grading.
A convergent period integral of the form 
$$I = \int_{\Dom_{\delta}} \omega  \qquad \hbox{ where }\quad \omega \in \Omega^{\ell} (\overline{\Mod}_{0,S} \backslash A;\Q)  $$
    can be interpreted as follows. 
Let 
$ A = \cup_{D \in \mathrm{Sing} (\omega)} D$ and  $B= \cup_{D \in \delta_f} D$.
By $(\ref{convergenceasvD})$,  $A$ and $B$ have no common irreducible components.
The integrand $\omega$ defines a relative cohomology class $[\omega] \in  m(A,B)_{dR}$ via the surjective map of global forms
$$  \Omega^{\ell}( \overline{\Mod}_{0,S} \backslash A, B \backslash (A \cap B);\Q) \To \Omega^{\ell}( \overline{\Mod}_{0,S}\backslash A;\Q)$$
It is surjective because  the irreducible components of $B$ have dimension $\ell-1$ and so the restriction of $\omega$ to $B$ necessarily vanishes.
On the other hand, the domain $\Dom_{\delta}$ defines a relative homology cycle in singular (Betti) homology of the underlying complex manifolds with $\Q$ coefficients:
$$[\Dom_{\delta}]  \in H^B_{\ell} (\overline{\Mod}_{0,S} \backslash A, B \backslash (B \cap A))   =  \big( H_B^{\ell} (\overline{\Mod}_{0,S} \backslash A, B \backslash (B \cap A))\big)^{\vee}$$
Thus we have $[\omega] \in m_{dR}(A,B)$ and $[\Dom_{\delta}] \in m(A,B)^{\vee}_B$, and the period integral can be interpreted via the Betti-de Rham comparison map
$$ \int_{\Dom_{\delta}} \omega = \langle \mathrm{comp}_{B, dR} \, [\omega] , [\Dom_{\delta}] \rangle \in \C$$
The  pair of divisors $A, B$ - which are described by combinatorial data - determine the numbers which can occur in the 
previous integral, as we shall presently explain.

\subsection{Motivic periods and vanishing} 
We refer to \cite{BrSVMP}, \S2 for background on motivic periods.  The ring of motivic periods of $\MT(\Z)$ is defined to be
 $$P^{\mm} = \Or( \mathrm{Isom}_{\MT(\Z)}(\omega_{dR}, \omega_B))\ .$$
 It is a graded ring, equipped with a  period homomorphism
$$\mathrm{per} : P^{\mm} \To \C$$
by evaluating on $ \mathrm{comp}_{dR, B}$. 
We apply this construction to integrals on moduli spaces.  Let $\omega$, $\Dom_{\delta}$, be as above and  define the motivic period integral to be 
$$I^{\mm} (\omega, \Dom_{\delta})= [ m(A,B) , [\omega], [\Dom_{\delta}]]^{\mm}  \quad \in \quad P^{\mm} \ , $$ 
which is  the function $\phi \mapsto \langle \phi(\omega), \Dom_{\delta} \rangle: \mathrm{Isom}_{\MT(\Z)}(\omega_{dR}, \omega_B) \rightarrow \A^1$.
Its period is 
 $$ \mathrm{per}\, I^{\mm}(\omega, \Dom_{\delta}) =   \int_{\Dom_{\delta}} \omega \ . $$
\begin{thm} The motivic period  $I^{\mm}(\omega,\Dom_{\delta})$ is a $\Q$-linear combination of motivic multiple zeta values of weights $\leq \ell$.
Furthermore, if 
$$\gr^W_{2m} m(A,B)=0$$
then the coefficients of  motivic multiple zeta values of weight $m$ in $I^{\mm}(\omega,\Dom_{\delta})$  vanish.
\end{thm}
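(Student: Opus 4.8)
The first assertion is a formal consequence of the two inputs already in place. Since $m(A,B)$ is an object of $\MT(\Z)$ by \cite{GM}, the motivic period $I^{\mm}(\omega,\Dom_{\delta})=[m(A,B),[\omega],[\Dom_{\delta}]]^{\mm}$ lies in the ring $P^{\mm}$ of motivic periods of $\MT(\Z)$. By Brown's theorem \cite{BrMTZ}, this ring is spanned over $\Q$ by motivic multiple zeta values; the odd powers of the Lefschetz period $\mathbb{L}^{\mm}$ (period $2\pi i$) do not occur because $\omega$ and $\Dom_{\delta}$ are real, so $I^{\mm}$ is invariant under the real Frobenius while $\mathbb{L}^{\mm}$ is anti-invariant, and the surviving even powers are themselves motivic MZVs. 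Because $m(A,B)=H^{\ell}$ of an $\ell$-dimensional relative pair, its weights lie in $[0,2\ell]$; matching this against the $\G_m$-grading of $P^{\mm}$ forces only motivic MZVs of weight $\leq\ell$ to appear.

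For the vanishing statement I would exploit the weight filtration. The de Rham realisation is canonically graded, $m(A,B)_{dR}=\bigoplus_k \gr^W_{2k}m(A,B)_{dR}$, so I may write $[\omega]=\sum_k[\omega]_k$ with $[\omega]_k\in\gr^W_{2k}m(A,B)_{dR}$, and by $\Q$-bilinearity of the motivic period in the de Rham and Betti data, $I^{\mm}=\sum_k[m(A,B),[\omega]_k,[\Dom_{\delta}]]^{\mm}$. Each summand with $[\omega]_k$ of de Rham weight $2k$ is a combination of motivic MZVs of weight $\leq k$, where the weight-$(k-j)$ contribution comes from pairing $[\omega]_k$ against the weight-$2j$ graded part of $[\Dom_{\delta}]$: this is exactly because the comparison isomorphism carries $\gr^W_{2k}m(A,B)_{dR}$ into $W_{2k}m(A,B)_B\otimes\C$ and the $\G_m$-grading of $P^{\mm}$ records the weight difference. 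Summing over $k$, the weight-$m$ part of $I^{\mm}$ a priori receives contributions from every $[\omega]_k$ with $k\geq m$, paired against the weight-$2(k-m)$ part of $[\Dom_{\delta}]$.

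The crux is therefore to cut this sum down to the single term $k=m$, and for this I would prove that $[\Dom_{\delta}]$ is a \emph{lowest-weight} Betti class, i.e.\ that as a functional on $m(A,B)_B$ it factors through $\gr^W_0$. Granting this, only the weight-$0$ part of $[\Dom_{\delta}]$ survives, so the weight-$m$ motivic MZVs in $I^{\mm}$ arise solely from $[\omega]_m$, and their coefficients are $\Q$-linear in $[\omega]_m\in\gr^W_{2m}m(A,B)_{dR}$. The hypothesis $\gr^W_{2m}m(A,B)=0$ gives $\gr^W_{2m}m(A,B)_{dR}=0$, whence $[\omega]_m=0$ and the weight-$m$ coefficients vanish. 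To establish the lowest-weight property I would identify $[\Dom_{\delta}]$ with the canonical Betti framing that exhibits $m(A,B)$ as a framed mixed Tate motive in the sense of \cite{GM}, \cite{BCS}: the closure of $\Dom_{\delta}$ is a compact cell whose boundary lies in $B$, and the condition $\mathrm{Sing}(\omega)\subseteq\delta_{\infty}$ (that $A$ and $B$ share no component) is what makes the dual weight-$0$ surjection $m(A,B)\to\Q(0)$, and hence the framing, well defined.

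The main obstacle is precisely this last lemma. Its necessity is visible on $\Mod_{0,6}$ ($\ell=3$): a spurious weight-$2$ component of $[\Dom_{\delta}]$ would let $\gr^W_6 m(A,B)$ leak weight-$2$ MZVs (a $\zeta^{\mm}(2)$-coefficient) into $I^{\mm}$ even when $\gr^W_4 m(A,B)=0$, so the weight-$0$-ness of the domain class cannot be dispensed with. I would settle it either from the explicit Orlik--Solomon/weight description of $m(A,B)_B$ for this hyperplane-complement geometry, or, more conceptually, from the canonical framing of \cite{GM}. A secondary technical point, which I would be careful to respect, is to run the bilinearity-and-weight-matching argument inside $P^{\mm}$ using its $\G_m$-grading, rather than only for the complex periods, so that the conclusion genuinely concerns the \emph{motivic} coefficients and not merely the numerical ones.
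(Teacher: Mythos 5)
Your first paragraph is essentially the paper's argument for the first assertion: $I^{\mm}(\omega,\Dom_{\delta})$ is a real, effective motivic period (the weights of $m(A,B)$ lie in $[0,2\ell]$ and $\Dom_{\delta}$ is invariant under real Frobenius), and one then cites \cite{BrSVMP}, Proposition 7.1(i), a corollary of \cite{BrMTZ}, exactly as you do. No issue there.

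The second half, however, goes astray at the claim that a summand $[m(A,B),[\omega]_k,[\Dom_{\delta}]]^{\mm}$ with $[\omega]_k$ of de Rham degree $2k$ can contribute motivic MZVs of all weights $\leq k$, with lower-weight pieces coming from "the weight-$2j$ graded part of $[\Dom_{\delta}]$". In the formalism in which the theorem is stated this cannot happen: the weight grading on $P^{\mm}$ is induced by the $\G_m$-action on the de Rham fibre functor \emph{alone} (\cite{BrSVMP}, (2.13)), so a matrix coefficient $[M,v,f]^{\mm}$ with $v$ of pure de Rham degree $2k$ is homogeneous of weight exactly $k$ for an \emph{arbitrary} Betti covector $f$ --- this is the same grading fact you yourself invoke to get the bound $\leq\ell$ in part one, and it already does all the work in part two. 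Correspondingly, your proposed key lemma is not well posed: the Betti realisation carries only a weight \emph{filtration}, not a grading, so $[\Dom_{\delta}]$ has no decomposition into graded parts, and a functional on $m(A,B)_B$ cannot canonically "factor through $\gr^W_0$" (there is no map $m(A,B)_B\to\gr^W_0 m(A,B)_B$; the non-splitting of this filtration is precisely what periods like $\zeta(3)$ measure). Your instinct here is closer to Goncharov-style \emph{framed} motives, where Betti frames are required to live on $\gr^W_0$, but Brown's matrix coefficients need no such choice. Concretely, in your own $\Mod_{0,6}$ example, $[m(A,B),[\omega]_3,[\Dom_{\delta}]]^{\mm}$ is homogeneous of weight $3$ whatever $[\Dom_{\delta}]$ is, so a $\zetam(2)$-coefficient can never "leak" out of it. Since you defer the lemma (correctly identifying it as the main obstacle) and never prove it, your argument has a genuine gap at exactly the point where the paper's proof is immediate: expand $[\omega]=\sum_{i,m}a^{(m)}_i[\omega^{(m)}_i]$ in a basis adapted to the de Rham grading, apply bilinearity to get $I^{\mm}=\sum_{i,m}a^{(m)}_i\,[m(A,B),[\omega^{(m)}_i],[\Dom_{\delta}]]^{\mm}$ with each summand homogeneous of weight $m$, and observe that the hypothesis $\gr^W_{2m}m(A,B)=0$ removes every weight-$m$ summand.
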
 

\begin{proof} The motivic period $ I^{\mm}(\omega, \Dom_{\delta})$ is in fact a real, effective motivic period because
$\Dom_{\delta}$ is invariant under real Frobenius, and $m(A,B)$ has weights in $[0,\ell]$ (see \cite{BrSVMP}, \S2). The first part follows from \cite{BrSVMP}, proposition  7.1 (i), which is a corollary of \cite{BrMTZ}. \footnote{This proof uses the main theorem of \cite{BrMTZ} and is not effective. It would be interesting
to have a version along the lines of proof of \cite{BrENS} which actually enables one to control denominators.}

 Now  let $\{[\omega^{(m)}_i]\}$ be a basis for $\gr^W_{2m} \, m(A,B)_{dR}$ for $0\leq m\leq \ell$. Then
there exist rational numbers  $a^{(m)}_i \in \Q$ such that 
$$[ \omega] = \sum_{i,m}  a_i^{(m)}  \,   [\omega^{(m)}_i] $$
and hence, by bilinearity of motivic periods, 
$$I^{\mm} (\omega, \Dom_{\delta})= \sum_{i,m}  a_i^{(m)}  \, [m(A,B) , [\omega^{(m)}_i], [\Dom_{\delta}]]^{\mm} \ , $$
where $ [   m(A,B) , [\omega_i^{(m)}], [\Dom_{\delta}]]^{\mm} \in P^{\mm}$ are motivic periods of weight $m$, since the weight-grading
is determined from the de Rham grading \cite{BrSVMP} $(2.13)$. The second part is immediate.
\end{proof}
Applying the period homomorphism immediately gives the
\begin{cor} The integral $I$ is a $\Q$-linear combination of multiple zeta values of weights $\leq \ell$. 
If  $\gr^W_{2m} m(A,B)_{dR}$ vanishes, then  this linear combination does not involve multiple zeta values of weight $m$.
\end{cor}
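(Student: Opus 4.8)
The plan is to obtain the corollary as the direct image of the preceding theorem under the period homomorphism $\mathrm{per}\colon P^{\mm}\to\C$, with no further geometric input. The two ingredients I would use are that $\mathrm{per}$ is a homomorphism of $\Q$-algebras sending each motivic multiple zeta value $\zetam(\s)$ to the corresponding real multiple zeta value $\zeta(\s)$, and that, by the very definition of $I^{\mm}(\omega,\Dom_{\delta})$, one has $\mathrm{per}\,I^{\mm}(\omega,\Dom_{\delta})=\int_{\Dom_{\delta}}\omega=I$.

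First I would invoke the first assertion of the theorem to write $I^{\mm}(\omega,\Dom_{\delta})=\sum_{0\le m\le \ell}\sum_i c_{m,i}\,\zetam(\s_{m,i})$, where $c_{m,i}\in\Q$ and each $\zetam(\s_{m,i})$ is a motivic multiple zeta value of weight $m$. Applying $\mathrm{per}$ and using $\Q$-linearity then yields $I=\sum_{m,i}c_{m,i}\,\zeta(\s_{m,i})$, a $\Q$-linear combination of multiple zeta values of weight $\le \ell$; this is the first claim. For the second, I would observe that under the hypothesis $\gr^W_{2m}m(A,B)_{dR}=0$ the theorem forces $c_{m,i}=0$ for every $i$, so no weight-$m$ multiple zeta value survives in $I$.

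The only point requiring care is the compatibility of hypotheses: the theorem is stated with $\gr^W_{2m}m(A,B)=0$, whereas the corollary uses $\gr^W_{2m}m(A,B)_{dR}=0$. These are equivalent because the de Rham fibre functor on the Tannakian category $\MT(\Z)$ is faithful and exact, and the weight grading on the de Rham side is precisely the one induced from the weight filtration of $m(A,B)$ (the grading that also defines the weight of a motivic period, as recalled in the theorem's proof); since $\gr^W_{2m}m(A,B)$ is a direct sum of copies of $\Q(-m)$ and $\Q(-m)_{dR}\neq 0$, its vanishing is equivalent to the vanishing of its de Rham realisation. I do not expect any genuine obstacle here: the corollary is literally the theorem read off under $\mathrm{per}$, and all the substantive content, namely the bound on weights and the vanishing criterion, has already been established at the motivic level.
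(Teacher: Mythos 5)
Your proposal is correct and is essentially the paper's own proof: the paper deduces the corollary in one line by applying the period homomorphism $\mathrm{per}\colon P^{\mm}\to\C$ to the motivic statement, exactly as you do. Your extra remark reconciling the hypotheses $\gr^W_{2m}m(A,B)=0$ and $\gr^W_{2m}m(A,B)_{dR}=0$ (via exactness and faithfulness of the de Rham realisation on $\MT(\Z)$) is a valid clarification of a point the paper passes over silently.
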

Thus a simple-minded method to achieve vanishing is to find boundary divisors $A,B$, such that  certain graded pieces
of the de Rham cohomology $m(A,B)_{dR}$ vanish.  This is possible for Ap\'ery's approximations to $\zeta(2)$ and $\zeta(3)$ (Appendix 3).

\begin{rem} A more promising approach to force vanishing of coefficients, which I have not explored,   is via  representation theory. Suppose that there is a finite group $G$ which acts upon 
$m(A,B)_{dR}$ (for instance, via  birational transformations of $\Mod_{0,S}$). Then each graded piece
$(m(A,B)_{dR})_n$
is a finite-dimensional $\Q[G]$-module. Let $V$ be an irreducible representation of $G$  over $\Q$ and $\pi_V$ the corresponding projector. Consider the motivic periods 
$ \pi_V\, I^{\mm}(\omega, \Dom_{\delta}) = I^{\mm}(\pi_V\omega, \Dom_{\delta})\ . $
If the representation $V$ does not occur in a component $ \gr^W_{2m} m(A,B)_{dR}$, then  $\pi_V\,I^{\mm}(
\omega, X)$ cannot 
contain a motivic  multiple zeta value of weight $m$. 
 \end{rem}
 \subsection{Remarks on the Galois coaction}  The ring of motivic periods carries an action of the de Rham  motivic Galois group
 $G^{dR}= \mathrm{Isom}(\omega_{dR}, \omega_{dR})$.   This is equivalent to a coaction by $\Or(U^{dR})$, 
 where $U^{dR}$ is the unipotent radical of $G^{dR}$. General nonsense provides an abstract formula for this coaction (see for example \cite{BrSVMP},  equation $(2.12)$.)
 
 \begin{problem} Find a combinatorial formula for the motivic    coaction on the $I^{\mm}(\omega, \Dom_{\delta})$. 
  \end{problem}
  
The analogous problem for motivic multiple zeta values is known, due to Goncharov, Ihara, and  \cite{BrMTZ}. 
The reason this problem is relevant for irrationality questions  is the fact  that a motivic period which is primitive for this coaction
is necessarily a linear combination of \emph{single} motivic zeta values only (\cite{BrMTZ}). Thus a solution to this problem would give a  criterion for obtaining linear forms in single zeta values, as opposed to multiple zeta values.
  It is already an interesting problem  to try to prove geometrically that the examples of \S\ref{sectOddZ} are primitive.

\subsection{Duality} Poincar\'e-Verdier duality states that 
\begin{equation}\label{mABdual}
m(A,B)= m(B,A)^{\vee} \otimes \Q (-\ell)
\end{equation}
where both sides have weights in the interval $[0,2\ell]$, since $\overline{\Mod}_{0,S}$ is smooth projective and $A\cup B$ normal crossing. In particular,
\begin{equation}
\gr^W_m m(A,B)_{dR} \cong \gr^W_{2\ell-m} m(B,A)_{dR} \ ,
\end{equation}
which enables us to transfer vanishing theorems from $m(A,B)$ to $m(B,A)$.

 The effect of duality on motivic periods is more subtle, and requires some more definitions. In \cite{BrSVMP}, (2.20), we defined a canonical homomorphism
 \begin{equation} \label{pimottodR} 
 \pi= \pi^{\mathfrak{u}, \mm+} : P_{\MT(\Z)}^{\mm,+ } \To \Or(U^{dR})
 \end{equation}
 where $P_{\MT(\Z)}^{\mm,+}\subset P_{\MT(\Z)}^{\mm}$ denotes the subspace of effective motivic periods. The kernel of  $\pi$ is the ideal generated by 
 $\mathbb{L}^{\mm}$, the motivic version of $2\pi i$. There is an antipode
$$S :\Or(U^{dR})\To \Or(U^{dR}) , $$
which corresponds to   duality in the Tannakian category  $\MT(\Z)$.
Since, in a graded Hopf algebra there is a recursive formula for the antipode in terms of the coproduct, the map $S$  can be computed explicitly on the level of  unipotent de Rham versions  of motivic multiple zeta values (see \cite{BrSVMP}, \S2.4).
In particular, we have
\begin{equation}  \label{Sofoddzeta}
S(\zetau(2n+1))=-\zetau(2n+1)\ , \end{equation}
where $\zetau$ is the image of $\zetam$ under the map $(\ref{pimottodR}).$

The periods of $m(A,B)$ and $m(B,A)$ are therefore related by passing to unipotent de Rham periods via the map $\pi$, which kills
$\zetam(2)$, and applying the antipode $S$.
To state this cleanly we make some simplifying assumptions. Let   $A,B$ be boundary divisors  on  $\overline{\Mod}_{0,S}$ with no  common components, and let $\ell=|S|-3$. Suppose that 
$$\omega \in   m(A,B)_{dR} \qquad \hbox{ and } \qquad \omega' \in  m(B,A)_{dR} $$
$$X \in   m(A,B)_{B}^{\vee}  \qquad \hbox{ and } \qquad X' \in   m(B,A)_{B}^{\vee} $$
and to simplify matters, let us assume that 
$$\gr^W_0 m(A,B) \cong \Q(0) \qquad \hbox{ and } \qquad \gr^W_{2\ell} m(A,B) \cong \Q(-\ell)\ .$$
Then the same is true for $m(B,A)$, by duality $(\ref{mABdual})$.  For any object $M\in \MT(\Z)$ satisfying $W_{-1}M=0$,
we defined in \cite{BrSVMP}, (2.21) a map of $\Q$-vector spaces
$$c_0^t: M_B^{\vee} \To  M_{dR}^{\vee}$$
 as the dual of the map $c_0: M_{dR}=\oplus \gr^W_{2k} M_{dR} \rightarrow \gr^W_0 M_{dR} \cong \gr^W_0 M_{B} = W_0 M_B \subset M_B$.  With the above assumptions, the  classes of $\omega$ and $^tc_0(X')(-\ell)$ in $\gr^W_{2\ell} m(A,B)_{dR}$  differ by a rational number.
Therefore let $\alpha \in \Q$ such that 
 $$[^tc_0(X')(-\ell)]  =   \alpha\, [\omega] \in  \gr^W_{2\ell} m(A,B)_{dR}$$ 
  Likewise, let $ \alpha'\in \Q$ such that 
  $[^tc_0(X)(-\ell)] = \alpha'  \,  [\omega'] \in \gr^W_{2\ell} m(B,A)_{dR}$.

\begin{lem} With these assumptions, we have
$$  \alpha' \,  \pi  I^{\mm} (\omega, X) \  \equiv  \   \alpha \, S ( \pi (I^{\mm} (\omega', X'))) $$
where the equivalence means modulo the image under the map $\pi$  of motivic multiple zeta values of weight $\leq \ell-1$.
\end{lem}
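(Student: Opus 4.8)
The plan is to reduce the identity to a statement purely about de Rham periods in $\Or(U^{dR})$, where the antipode $S$ is the genuine Hopf-algebra antipode, and then read off both sides from Poincar\'e-Verdier duality $(\ref{mABdual})$. Recall that for an effective object $M$ with $\gr^W_0 M$ a sum of copies of $\Q(0)$, the map $\pi=\pi^{\uu,\mm+}$ sends a motivic period $[M,v,\sigma]^{\mm}$ (with $v\in M_{dR}$, $\sigma\in M_B^\vee$) to the de Rham period $[M,v,c_0^t(\sigma)]^{\dR}\in\Or(U^{dR})$, obtained by replacing the Betti functional $\sigma$ by the de Rham functional $c_0^t(\sigma)\in M_{dR}^\vee$. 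Since $c_0$ vanishes on $\Q(-1)$, this recipe indeed kills $\mathbb{L}^{\mm}$, and applied to $m(A,B)$ it gives $\pi\,I^{\mm}(\omega,X)=[m(A,B),\omega,c_0^t(X)]^{\dR}$, and likewise for $m(B,A)$.

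Next I would transport this across duality. Feeding $(\ref{mABdual})$, $m(A,B)=m(B,A)^\vee\otimes\Q(-\ell)$, into the de Rham period and using that $U^{dR}$ acts trivially on the one-dimensional Tate space $\Q(-\ell)_{dR}$, the twist disappears at the level of the underlying function on $U^{dR}$ (it only relabels the weight-grading). This identifies $[m(A,B),\omega,c_0^t(X)]^{\dR}$ with a de Rham period of $m(B,A)^\vee$, to which I apply the antipode relation $S[M,v,f]^{\dR}=[M^\vee,f,v]^{\dR}$ (the dual of an object corresponds to $g\mapsto g^{-1}$ in $G^{dR}$, whence the Hopf antipode, consistent with $(\ref{Sofoddzeta})$). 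This rewrites the left-hand side as $S$ applied to a de Rham period of $m(B,A)$ whose de Rham vector is the untwist $c_0^t(X)(-\ell)\in m(B,A)_{dR}$ and whose de Rham functional is the untwist of $\omega$ in $m(B,A)_{dR}^\vee$.

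It remains to compare these two classes with $\omega'$ and $c_0^t(X')$, which is exactly what the constants $\alpha,\alpha'$ record. By construction the untwisted vector $c_0^t(X)(-\ell)$ agrees with $\alpha'\omega'$ in the top weight $\gr^W_{2\ell}m(B,A)_{dR}$, while the untwisted functional agrees with a rational multiple of $c_0^t(X')$ in $\gr^W_0 m(B,A)_{dR}^\vee$, the multiple being governed by $\alpha$. The crucial observation is that altering the de Rham vector by a class in $W_{2\ell-2}$, or the de Rham functional by a class pairing trivially with $\gr^W_0$, changes $[m(B,A),\,\cdot\,,\,\cdot\,]^{\dR}$ only by de Rham periods of weight $\leq\ell-1$: indeed $U^{dR}$ preserves the weight filtration and acts trivially on $\gr^W$, so the unipotent corrections $uv-v$ lie in strictly lower weight. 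Replacing the two classes by their leading-weight representatives therefore produces a rational multiple of $\pi\,I^{\mm}(\omega',X')$ up to weight $\leq\ell-1$; and since $S$ preserves both the image of the multiple zeta values and the weight filtration, applying $S$ and clearing denominators yields the asserted congruence $\alpha'\,\pi I^{\mm}(\omega,X)\equiv\alpha\,S(\pi I^{\mm}(\omega',X'))$ modulo $\pi$ of multiple zeta values of weight $\leq\ell-1$.

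The main obstacle is the weight-and-twist bookkeeping in the middle step: one must check that untwisting $\Q(-\ell)$ commutes with $\pi$ in the precise sense that it only shifts the grading, and simultaneously keep track of which of the two top/bottom-weight comparisons contributes $\alpha$ and which contributes $\alpha'$. Because the antipode on $\Or(U^{dR})$ is given recursively through the coproduct and genuinely mixes weights, the equality can only be exact in top weight $\ell$; all lower-weight discrepancies — both those from the unipotent corrections and those introduced by $S$ — must be shown to be absorbed into the weight $\leq\ell-1$ error term, which is the content of the relation $\equiv$.
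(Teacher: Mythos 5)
Your proof is correct and is essentially the paper's own argument: both express $\pi$ of the two motivic periods as unipotent de Rham matrix coefficients via ${}^tc_0$, apply the antipode rule $S[M,v,f]^{\uu}=[M^{\vee},f,v]^{\uu}$, remove the $\Q(-\ell)$ twist coming from Poincar\'e--Verdier duality using $\mathbb{L}^{\uu}=1$, and then compare top-weight de Rham classes via $\alpha,\alpha'$, absorbing all discrepancies into periods of weight $\leq \ell-1$ (you merely run the chain starting from $\pi I^{\mm}(\omega,X)$ instead of from $I^{\mm}(\omega',X')$, and you spell out the lower-weight error analysis that the paper compresses into ``the statement follows''). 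One caveat, shared with the paper itself: carried out precisely, your own bookkeeping (vector $\mapsto \alpha'\omega'$ in $\gr^W_{2\ell}$, functional $\mapsto \alpha^{-1}\,{}^tc_0(X')$ modulo functionals killing $\gr^W_0$) yields $\alpha\,\pi I^{\mm}(\omega,X)\equiv \alpha'\,S(\pi I^{\mm}(\omega',X'))$, i.e.\ the congruence with $\alpha$ and $\alpha'$ interchanged relative to the stated lemma, so the final line you assert does not follow from your intermediate steps as written --- this points to a transposition of the constants in the lemma's statement (the paper's proof produces the same swapped version) rather than to a gap in either argument.
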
 
\begin{proof}
We have $I^{\mm}(\omega', X') = [m(B,A), \omega', X']^{\mm} $ and hence by \cite{BrSVMP}, (2.22), 
$$\pi [m(B,A), \omega', X']^{\mm} = [ m(B,A), \omega' , {}^tc_0(X')]^{\uu}$$
The antipode $S$ on matrix coefficients $[M, v_1,v_2]^{\uu}$ is $[M^{\vee}, v_2, v_1]^{\uu}$, so we have
$$ S \pi I^{\mm}(\omega', X')  = [m(B,A)^{\vee},   {}^tc_0(X'), \omega']^{\uu} = [m(A,B)(\ell),   {}^tc_0(X'), \omega']^{\uu} $$
Now $[V(r), v_1(r), v_2(r)]^{\uu} = (\mathbb{L}^{\uu})^r [V, v_1,v_2]^{\uu}$, and since $\mathbb{L}^{\uu} =1$, we have
 $[V, v_1,v_2]^{\uu} = [V(r), v_1(r), v_2(r)]^{\uu}$ for all $r\in \Z$.
Since $\pi  I^{\mm} (\omega, X)  = [m(A,B), \omega, {}^tc_0X)]^{\uu}$,   the statement follows. 
\end{proof} 

In  other words, the highest weight part of $I^{\mm}(\omega, X)$ is related, modulo $\zetam(2)$, to the highest weight part of $I^{\mm}(\omega',X')$
via the antipode on unipotent de Rham periods. 

\begin{rem} In the case when the motive $m(A,B)$ is self-dual, these observations give some non-trivial constraints on the periods which can occur.
 For example, via the equation 
 $S(\zetau(3,5)) = \zetau(3,5) +5 \zetau(3) \zetau(5),$
 we see that $\zeta(3,5)$ can never occur as a  period of a self-dual motive.
 Therefore  the self-dual cellular values for $\Mod_{0,11}$  (which we expect to be periods of self-dual motives) should evaluate to polynomials in single zeta values only.
 \end{rem}
 
 \section{Appendix 1:  A short compendium of integrals}
 The literature which has grown out of Ap\'ery's irrationality proofs for $\zeta(2)$ and $\zeta(3)$, and in particular, Beuker's interpretation  (found independently by Cordoba)
 using elementary integrals  \cite{Beu}, is vast. I have selected a very incomplete list of integrals with  various irrationality applications
 and reproduced them here in their original notations. The integrals  below are referred to in the main text, but there are many others that could also
  have been included.

 \subsection{Beukers' integrals for $\zeta(2)$ and $\zeta(3)$}
 The following family of integrals:
 \begin{equation} \label{AIBeuk2}
  \int_{0}^1 \!\int_0^1 {x^n(1-x)^ny^n(1-y)^n  \over (1-xy)^{n+1}} {dx dy }   \qquad {\text{\cite{Beu}, Eqn.} (5)} 
 \end{equation} 
 for $n\geq 0$,   are linear forms in $1$ and $\zeta(2)$, and give exactly Ap\'ery's proof of the irrationality of $\zeta(2)$. 
 In \cite{Beu}, Beukers introduces the following family of integrals
 \begin{equation}  \label{AIBeuk3}
  \int_{0}^1\! \int_0^1 \!\int_0^1 {x^n(1-x)^ny^n(1-y)^n  w^n(1-w)^n \over (1-(1-xy)w)^{n+1}} {dx dy dw} \qquad \text{\cite{Beu}, Eqn. } (7)  
 \end{equation} 
 and proves that they give linear forms in $1$ and $\zeta(3)$, identical to those considered by Ap\'ery, and hence leads to
 the irrationality of $\zeta(3)$ (\cite{Beu}, \cite{FiBourbaki} \S1.3).
 \subsection{Rhin and Viola's generalisations to several parameters}
 In \cite{RV1},  Rhin and Viola consider a generalisation of $(\ref{AIBeuk2})$ depending on parameters $h,i,j,k,l\geq 0$
 \begin{equation}  \label{AIRV2}
  \int_{0}^1 \!\int_0^1 {x^h(1-x)^i y^j(1-y)^k  \over (1-xy)^{i+j-l}} {dx dy \over 1-xy } 
 \end{equation} 
 which give linear forms in $1, \zeta(2)$. These integrals had previously been  considered by Dixon  \cite{Dixon} in 1905.

 In \cite{RV2}, Rhin and Viola consider a family of integrals generalising $(\ref{AIBeuk3})$  which depend on parameters $h,j,k,l,m,q,r,s\geq 0$:
  \begin{equation}  \label{AIRV3}
  \int_{0}^1 \!\int_0^1 \!\int_0^1  {x^h(1-x)^l y^k(1-y)^s z^j (1-z)^q   \over (1-(1-xy)z)^{q+h-r}} {dx dy dz \over 1-(1-xy)z } 
 \end{equation} 
subject to the conditions $j+q = l +s$ and $m= k+r-h$. This family yields linear forms in $1$, $\zeta(3)$.
The families $(\ref{AIRV2})$ and $(\ref{AIRV3})$, combined with the group method initiated in the same papers  yield
the best irrationality measures for $\zeta(2)$ and $\zeta(3)$ which are presently known  (see  \cite{RV2}, \cite{FiBourbaki} \S3.1).

\subsection{Sorokin's integrals in $\zeta(2n)$}
In \cite{Sorokin}  Sorokin  considers the integrals 
\begin{equation}\label{AISo1} 
 \int_0^1\! \ldots \!\int_0^1 \prod_{j=1}^n { u_j^n (1-u_j)^n v_j^n(1-v_j)^n \over  ({1 \over u_1v_1\ldots u_{j-1}v_{j-1}} -u_jv_j)^{n+1}} du_j dv_j
 \end{equation} 
and proves that they give linear forms in even zeta values to deduce a new proof of the transcendence of $\pi$. By clearing the terms in the denominator
and renaming variables in accordance with \cite{Fi} $(7)$, one obtains the family for $n=2p$ even:
\begin{multline} \label{AISo2}
\int_{[0,1]^N}  { (y_1y_2)^{p(N+1)-1} (y_3y_4)^{(p-1)(N+1)-1} \ldots (y_{N-1}y_N)^N 
\over  \prod_{k\in \{2,\ldots, N\} \hbox{ even}}  (1-y_1y_2\ldots y_k)^{N+1} } \\ \times 
\prod_{k=1}^N (1-y_k)^{N} dy_1 \ldots dy_N\ .
\end{multline}
This family of integrals are periods of  the moduli space $\Mod_{0,2n+3}$ of the form $(\ref{GeneralCubicalInt})$.
I do not know if this family of integrals can be written as special cases of generalised cellular integrals on $\Mod_{0,2n+3}$.

\subsection{Rivoal and Fischler's integrals for odd zeta values}
Rivoal's linear forms \cite{RiCRAS} are equivalent to the following  family of integrals:
\begin{equation} \label{AIRiv1}
\int_{[0,1]^{a+1}}  {\prod_{i=0}^a x_i^{rn} (1-x_i)^n \over (1-x_0 \ldots x_{a})^{(2r+1)n+2} } dx_0 \ldots dx_{a} 
\end{equation}
where $n\geq 0$ and $a, r\geq 1$ such that $(a+1)n > (2r+1)n+2$.  He proves in particular  that if $n$ is even and  $a$ is odd $\geq 3$ then it gives
 linear  forms in odd zeta values  $1,\zeta(3)$, \ldots, $\zeta(a)$ and goes on to deduce that infinitely many of them are irrational.
This integral has weight drop in the sense that it is an $a+1$-fold integral whose periods are of weight at most $a$. I did not consider weight-drop  integrals here, although  the apparent simplicity of 
$(\ref{AIRiv1})$ suggests that it would  be  interesting to do so.

Instead, at the end of section 2.4 in \cite{FiBourbaki}, Fischler gives a variant of the above integrals (which are very well-poised, as opposed to simply well-poised), by multiplying the integrand of $(\ref{AIRiv1})$ by $(1+x_0\ldots x_a)/(1-x_0\ldots x_a)$  (\cite{FiBourbaki}, \S2.3.1). 
He proves that the latter  integrals are equivalent  (with slightly different notation) to:
\begin{equation}  \label{AIRiv2} 
 \int_{[0,1]^{a-1}} {\prod_{j=1}^{a-1} y_j^{rn}(1-y_j)^n dy_j \over (1-y_1y_2\ldots y_{a-1})^{rn+1} 
\prod_{2\leq 2j  \leq a-2 }  (1-y_1y_2\ldots y_{2j})^{n+1} }
\end{equation}
where $n\geq 0$,   $a\geq 3$ and $1 \leq r <{a\over 2}$ are integers.  In the case when $a$ is even, it gives linear forms in the odd 
zeta values $1,\zeta(3), \ldots, \zeta(a-1)$ (\cite{FiBourbaki}, proposition 2.5).  The relationship between
$(\ref{AIRiv1})$ and $(\ref{AIRiv2})$ is discussed in the two paragraphs preceding $\S3$ of \cite{FiBourbaki}  and builds on theorem 5 in \cite{Zu2}.  
See the discussion below. Note that when $a$ is odd, $(\ref{AIRiv2})$ apparently gives linear forms in even zeta values $1, \zeta(2), \ldots, \zeta(a-1)$.

\subsection{Generalisations} 
Some generalisations of Rivoal's integrals $(\ref{AIRiv1})$ to a three-parameter family of integrals
yielding linear forms in $1,\zeta(2), \ldots, \zeta(n)$ are given in \cite{RiSelberg}, theorem 1.
In   \cite{Zu2} equation (70), Zudilin  considers the family of integrals 
\begin{equation} \label{AIZud}
J_k(a_0,\ldots, a_k, b_1,\ldots, b_k) = \int_{[0,1]^k} {  \prod_{j=1}^k x_j^{a_j-1} (1-x_j)^{b_j-a_j-1} \over Q_k(x_1,\ldots, x_k)^{a_0} } dx_1\ldots dx_k
\end{equation}
generalising work of Vasilyev and Vasilenko. Here,  $k \geq 4$ and 
$$Q_k(x_1,\ldots, x_k) =  1 - x_1(1-x_{2}( 1-\cdots (1-x_k))) \ .$$
In \cite{Zu2},  theorem 5,  he relates a certain sub-family of these integrals to hypergeometric series, and proves as a consequence  that  if
 $$b_1 +a_2= b_2+a_3=\ldots =b_{k-1} +a_k$$
 then the integrals $J_k(a,b)$ yield linear forms in odd  zeta values when $k$ is odd, and even zeta values when $k$ is even.
 For example, when $k=5$, this gives a $6$-parameter family of integrals which are linear forms in $1, \zeta(3), \zeta(5)$ (note that our generalised cellular
 integrals for $\pi^{8}_{\mathrm{odd}}$ apparently yields an $8$-parameter family which is strictly bigger, with the same property). 
A version of this family of integrals is considered by Fischler in    \cite{Fi} (5). His family of 
 integrals is denoted by 
\begin{equation} \label{AIFisch1}
 I(a_1,\ldots, a_n, b_1, \ldots, b_n, c) = \int_{[0,1]^n} { \prod_{k=1}^n x_k^{a_k} (1-x_k)^{b_k} \over  \delta_n(x)^c  } {dx_1\ldots dx_n \over  \delta_n(x)} 
 \end{equation}
where he writes $\delta_n(x) $ for $Q_n(x_n,\ldots, x_1)$ and is clearly equivalent to $(\ref{AIZud})$.
These families of  integrals
 are not obviously of moduli space type. 
 
 However,  in \cite{Fi}  equation $(9)$, Fischler defines the family of integrals
\begin{equation} \nonumber
 K( A_1, \ldots, A_n, B_1,\ldots, B_n, C_2, \ldots, C_n ) = \int_{[0,1]^n} { \prod_{k=1}^n y_k^{A_k} (1-y_k)^{B_k} \over 
  \prod_{k=2}^n (1-y_1\ldots y_k)^{C_k+1}} dy_1\ldots dy_n 
  \end{equation}
which are evidently  period  integrals on $\Mod_{0,n+3}$
written in cubical coordinates $y_1,\ldots, y_n$. By applying a carefully-constructed change of variables, he proves that the 
$K(A,B,C)$ can be re-expressed as integrals of the form 
\begin{equation}  \label{AIFisch2}
  \int_{[0,1]^n} { \prod_{k=1}^n x_k^{\widetilde{a}_k} (1-x_k)^{\widetilde{b}_k} \over  \prod_{k=2}^n Q_k(x_n,\ldots, x_{n+k-1})^{\widetilde{c}_k}  } {dx_1\ldots dx_n} 
  \end{equation} 
in new parameters $\widetilde{a}, \widetilde{b}, \widetilde{c}$ expressible in terms of the $A,B,C$. 
In addition he shows that  the families of integrals
 $(\ref{AIFisch1})$, and hence $(\ref{AIZud})$, form a sub-family of the integrals  $K(A,B,C)$.
 Thus all the integrals considered in this section are in fact equivalent to periods of moduli spaces $\Mod_{0,n}$.
Both Fischler and Zudilin construct symmetry groups for their respective families of integrals $(\ref{AIZud})$  and $(\ref{AIFisch1})$, similar to those
introduced by Rhin and Viola \cite{RV1,RV2}.

 \section{Appendix 2: Examples of basic cellular integrals}
 
 \subsection{Convergent configurations}
 Let $\mathcal{C}_N$ denote the number of convergent configurations of size $N$. Then we find that
 $$
\begin{array}{|r|cccccccc|}
\hline
 N &    4& 5 & 6 & 7 & 8 & 9 & 10 & 11    \\ \hline
\mathcal{C}_N &  0  & 1  & 1 & 5 & 17 & 105 & 771 & 7028 \\
\hline
\end{array}
$$
Here follows   a  list of convergent configurations of size $N$, where $4\leq N \leq 8$.

 \subsubsection{$N=5$}  \label{ExN5} There is a unique convergent configuration:
 $$ {}_5 \pi={}_5 \pi^{\vee}= [5,2,4,1,3]$$

 \subsubsection{$N=6$} \label{ExN6}  There is a unique convergent configuration:
 $$ {}_6 \pi={}_6 \pi^{\vee}= [6,2,4,1,5,3]$$

 \subsubsection{$N=7$} There are five convergent configurations. There are two pairs of configurations and their duals:
 $$
\begin{array}{ccc}
  {}_7 \pi_1 = [7,2,4,1,6,3,5]  & \quad  , \quad  &  {}_7 \pi_1^{\vee} = [7,2,5,1,4,6,3]    \\
   {}_7 \pi_2 = [7,2,4,6,1,3,5] & \quad  , \quad  &  {}_7 \pi_2^{\vee} = [7,3,6,2,5,1,4]    
  \end{array}
  $$
  and a single self-dual configuration:
$$     {}_7 \pi_3 ={}_7 \pi_3^{\vee}= [7,2,5,1,3,6,4]     $$

 \subsubsection{$N=8$} There are 17 convergent configurations, comprising 7 pairs of configurations and their duals:
 $$
\begin{array}{ccc}
    {}_8\pi_1=  [8, 2, 4, 1, 5, 7, 3, 6]   & \quad  , \quad  &  {}_8   \pi^{\vee}_{1}=[8, 2, 5, 1, 7, 4, 6, 3]    \\
    {}_8\pi_4=     [8, 2, 4, 7, 1, 6, 3, 5]   & \quad  , \quad  &   {}_8\pi_4^{\vee}=      [8, 2, 4, 7, 3, 6, 1, 5]    \\
     {}_8  \pi_{5}=    [8, 2, 5, 3, 7, 1, 6, 4]  & \quad  , \quad  &      {}_8\pi_{5}^{\vee}=      [8, 2, 6, 1, 5, 3, 7, 4]    \\
     {}_8   \pi_7=     [8, 2, 4, 6, 1, 3, 7, 5]   & \quad  , \quad  &      {}_8  \pi_{7}^{\vee}=    [ 8, 2, 5, 1, 6, 3, 7, 4]     \\
    {}_8     \pi_{8}= [ 8, 2, 5, 1, 6, 4, 7, 3]    & \quad  , \quad  &      {}_8    \pi_8^{\vee}=  [8, 2, 4, 1, 7, 5, 3, 6]   \\
  {}_8      \pi_{9}=  [8, 2, 5, 7, 3, 1, 6, 4]   & \quad  , \quad  &      {}_8     \pi_{9}^{\vee}=  [8, 3, 6, 1, 5, 2, 7, 4]    \\
 {}_8      \pi_{10}=  [8, 2, 5, 7, 3, 6, 1, 4] & \quad  , \quad  &      {}_8       \pi_{10}^{\vee}=  [8, 2, 5, 7, 4, 1, 6, 3]  \\
  \end{array}
  $$
  and three self-dual configurations:
\begin{eqnarray}
{}_8\pi_2= {}_8\pi_2^{\vee} & =  &    [8, 2, 4, 1, 6, 3, 7, 5 ]  \nonumber\\
   {}_8\pi_{3}={}_8\pi_3^{\vee} & =  & [8, 2, 5, 1, 7, 3, 6, 4] \nonumber \\
   {}_8\pi_{6} ={}_8\pi_{6}^{\vee} & =  &  [8, 3, 6, 1, 4, 7, 2, 5] \nonumber 
\end{eqnarray} 
  
  \subsection{Basic cellular integrals}
  \subsubsection{n=5}
 In simplicial  coordinates $(t_1,t_2)$, and $\sigma= (5,2,4,1,3)$ we have
 \begin{equation} 
 f_{\sigma}  =   { t_1 (t_1-t_2)(t_2-1) \over (t_1-1)t_2} \quad \hbox{ and } \quad 
 \omega_{\sigma} =    { dt_1 dt_2 \over (t_1-1)t_2} \nonumber
 \end{equation} 
 From theorem $\ref{thmperiodsofmon}$, for example, we know that   $I_{\sigma}(N)$ is a linear form in $1$ and $\zeta(2)$. 
  Furthermore, we verify that
  $$I_{\sigma}(N)= \int_{\Dom_5} f_{\sigma}^N \omega_{\sigma} =  a_N \zeta(2) -  b_N$$
  where $a_N,b_N$ are solutions to the recurrence A005258 in \cite{Sloane}
  \begin{equation}  \label{Ap2} (N+1)^2 u_{N+2}   -(11 N^2+11 N+3) u_{N+1}- N^2 u_N=0\ . \end{equation}
 with initial conditions $a_0 =1, a_1 =3$, $b_0 =0, b_1=5$. This is   precisely Ap\'ery's sequence for $\zeta(2)$.  It is self-dual; i.e., the polynomial 
 $p(N) = 11 N^2 +11N+3$ satisfies 
 $p(-1-N) =p(-N)$ 
(equivalently, the coefficient of $N^2$ equals the coefficient of $N$).

\begin{rem} Changing to cubical coordinates  via $t_1=xy$, $t_2=y$, we get
  $$I_{\sigma}(N) = \int_{0}^1 \int_0^1 \Big( {x y (1-x) (1-y) \over (1-xy)}\Big)^N {dx dy \over 1-xy}$$
  which, is exactly Beuker's integral for  $(\ref{AIBeuk2})$. \end{rem}

  \subsubsection{n=6}There is again a  unique convergent configuration up to symmetry, namely $\sigma = (6,2,4,1,5,3)$. In 
simplicial coordinates $(t_1,t_2,t_3)$, we have
\begin{equation}
f_{\sigma} = {t_1 (t_2-t_1)(t_3-t_2)(t_3-t_1)  \over t_2(t_1-1)(t_2-1)t_3} \quad    \hbox{ and } \quad  \omega_{\sigma} = {dt_1 dt_2 dt_3 \over t_2(t_1-1)(t_2-1)t_3} 
\end{equation} 
It follows, for example, from theorem \ref{thmperiodsofmon} that $I_{\sigma}(N)$ is a linear form in $1$ and $\zeta(3)$ only, i.e., the coefficient of $\zeta(2)$
that could have occurred by theorem $\ref{thmperiodsofmon}$  vanishes. Furthermore
$$I_{\sigma}(N) = \int_{0\leq t_1 \leq t_2 \leq t_3 \leq  1}  f^N \omega =2\, a_N \zeta(3) -  b_N$$
where $a_N, b_N$ 
satisfy the recurrence (A005259 in \cite{Sloane})
\begin{equation} \label{Ap3} (N+1)^3 u_{N+2}  -(2N+1)(17 N^2+17 N+5) u_{N+1} + N^3 u_{N} = 0 \ . \end{equation}
which is precisely Ap\'ery's sequence for $\zeta(3)$, with initial conditions
$a_0=1, a_1=5$ and $b_0=0,b_1=6$. This equation  is again self-dual, which implies that the coefficient of $u_{N+1}$ is  of the form $(2N+1)(aN^2+aN+b)$ for some $ a, b\in \Q$.
By passing to cubical coordinates $t_1=xyz, t_2 = yz, t_3=z$ and applying the change of variables $(\ref{ChangeofVarRV3})$, we see that this family of integrals
exactly coincides with Beuker's integrals $(\ref{AIBeuk3}).$ This follows on setting all parameters in $(\ref{gencellz3})$ equal to $N$.

  \subsubsection{n=7}
  For $\sigma$ one of the  five  convergent configurations ${}_7\pi_i$, for  $i=1,\ldots, 5$  listed above, the integrals $I_{\sigma}(N)$ are  distinct linear forms  
$$a^{}_N \zeta^{\sigma} + b^{}_N \zeta(2) + c^{}_N$$
for some $I_{\sigma}(0) = \zeta^{\sigma} \in \Q^{\times} \zeta(4)$, and $a^{}_N$, $b^{}_N$, $c^{}_N$   are solutions to an equation 
$$   p^{(i)}_3 u_{N+3} + p^{(i)}_2 u_{N+2}+ p^{(i)}_1 u_{N+1}+  p^{(i)}_0 u_{N}=0$$
 where $p^{(i)}_j$ are polynomials of degree $6$ in $N$. 
 The   sequences  $a_N$ appear to satisfy interesting congruence properties  along the lines of \cite{Osb}.

The numbers which are observed to  occur as basic cellular integrals (or indeed as generalised cellular integrals)
 are indicated in the table below.  A dark $\bullet$ indicates that the corresponding period  can occur with non-zero coefficient, a $0$ indicates
  that the corresponding period is not observed to occur in the generalised cellular integrals.
  The sign of $I_\pi(0)$ has been chosen to make the integral positive.

$$
\begin{array}{|r|c|c|c|c|c|}\hline
 \hbox{Configurations}                    &  1   &  \zeta(2) & \zeta(3) & \zeta(4) &    I_{\pi}(0) \\ \hline
                   {}_7\pi_1 & \bullet & \bullet & 0  & \bullet &   {17\over 10} \zeta(2)^2\\
                                 \hline
         
                   {}_7\pi_2 & \bullet & \bullet & 0  & \bullet &   {27\over 10} \zeta(2)^2\\
                                 \hline
                   {}_7\pi_3 & \bullet & \bullet & 0  & \bullet &   \zeta(2)^2 \\
                                 \hline
                   {}_7\pi^{\vee}_1 & \bullet & \bullet & 0  & \bullet &  {7\over 10} \zeta(2)^2\\
                                 \hline
                   {}_7\pi^{\vee}_2 & \bullet & \bullet & 0  & \bullet &    {3\over 10} \zeta(2)^2\\
                                 \hline
                                           \end{array}
$$
  \vspace{0.1in}

 For ${}_7\pi_3 = (7, 2, 5, 1, 3, 6, 4)$, the basic cellular integrals satisfy
$I_n^{{}_7 \pi_3} = \big(I^{{}_5\pi_1}_n\big)^2$, the square of the Ap\'ery sequences for $\zeta(2)$.
The configuration $\pi^7_{\mathrm{even}}$ of \S\ref{sectOddZ} is   ${}^7\pi_1^{\vee}$.

  \subsubsection{n=8}
  
  Experimentally, we find that the   17 convergent configurations  only give rise to  13 distinct families of linear forms, given in the table below.

  $$
\begin{array}{|r|c|c|c|c|c|c|c|}\hline
 \hbox{Configurations}                    &  1   &  \zeta(2) & \zeta(3) & \zeta(4) & \zeta(5) & \zeta(3)\zeta(2)  & I_{\pi}(0) \\ \hline
                   {}_8\pi_1 \ , \     {}_8\pi^{\vee}_{1}  & \bullet &\bullet &\bullet& 0& 0 & \bullet & 2 \zeta(2) \zeta(3)\\
                                 \hline
                      {}_8\pi_2 \ , \ {}_8\pi_3^{\vee} & \bullet &\bullet &\bullet&  0 & \bullet & \bullet & \zeta(5)+ \zeta(3)\zeta(2)\\
                                                                                   \hline
                                        {}_8\pi_4  \ , \   {}_8\pi_{5}  & \bullet &\bullet &\bullet&  0 & \bullet & \bullet & 9\zeta(5)-2\zeta(2)\zeta(3)\\
                                                   \hline
                                        {}_8\pi_4^{\vee} \ , \ {}_8\pi_{5}^{\vee}  & \bullet &\bullet &\bullet&  0 & \bullet & \bullet& 9\zeta(5)-4\zeta(3)\zeta(2)\\
                                          \hline
                     {}_8 \pi_{6}  & \bullet &\bullet &\bullet &  0 & \bullet & \bullet & 16 \zeta(5)-8\zeta(3)\zeta(2)\\
                      \hline
                                        {}_8\pi_7   & \bullet &\bullet &\bullet&  0 & \bullet & \bullet & \zeta(5)+3\zeta(3)\zeta(2)\\
                     \hline
                                        {}_8\pi_{7}^{\vee}   & \bullet &\bullet &\bullet&  0 & \bullet & \bullet & \zeta(3)\zeta(2)-\zeta(5) 
                              \\
                \hline
                                       {}_8 \pi_{8}    & \bullet &0 &\bullet&  0 & \bullet & 0 & 2\zeta(5)\\  
                                                  \hline
                     {}_8 \pi_8^{\vee}  & \bullet &\bullet & 0  &  0 & \bullet & \bullet & 2\zeta(5)+ 4\zeta(3)\zeta(2)\\
                    \hline
                     {}_8 \pi_{9}  & \bullet &\bullet &\bullet &  0 & \bullet & \bullet & 6 \zeta(3)\zeta(2) -7 \zeta(5)  \\
\hline
                  {}_8 \pi_{9}^{\vee}  & \bullet &\bullet &\bullet &  0 & \bullet & \bullet&  4\zeta(3)\zeta(2) - 7 \zeta(5) \\
                    \hline
                     {}_8 \pi_{10} & \bullet &\bullet &\bullet &  0 & \bullet & \bullet & 5\zeta(3)\zeta(2) -8 \zeta(5)\\
                    \hline
                    {}_8  \pi_{10}^{\vee}& \bullet &\bullet &\bullet &  0 & \bullet & \bullet & 8\zeta(5)-3\zeta(3)\zeta(2)\\
                                     \hline
                  \end{array}
$$
  \vspace{0.1in}

The  values of the  integrals $I_{\pi}(0)$ are consistent with Poincar\'e-Verdier duality:
$$|I_{\pi}(0)| \equiv |I_{\pi^{\vee}}(0)| \pmod{\zeta(2)}$$ 
 as expected, since the antipode acts by $-1$ on single odd zeta values    $(\ref{Sofoddzeta})$.
The first set of entries of the table gives the series which is the product of the Ap\'ery sequences           
 for $\zeta(2), \zeta(3)$ via the multiplicative structure \S\ref{sectMultStruct} (example \ref{exampleprod}) 
 $$ I_{{}_8\pi_1}(N) = I_{{}_5\pi_1}(N)  I_{{}_6\pi_1}(N)$$
   There is a unique entry, ${}_8 \pi_8 = {}^8 \pi_{\mathrm{odd}}$ which  gives a linear form in $1, \zeta(3), \zeta(5)$  only.
   It is amply sufficient to prove that $\dim_{\Q}  \langle1, \zeta(3), \zeta(5) \rangle_{\Q} \geq 2$
but insufficient to prove their linear independence. The dual ${}_8 \pi_8^{\vee}$ of this sequence  gives linear forms in $1, \zeta(2)$ and $2 \zeta(5)+ 4\zeta(3)\zeta(2)$ from which we can extract linear forms in $1, \zeta(5)$ (see \S\ref{sectdualforms}).

\subsubsection{n=9} There are 105 convergent configuration classes. By computing all of them in low degrees, one observes that 
all  the linear forms vanish in weight $5$, and so in particular,  the  coefficient of $\zeta(5)$  and $\zeta(2)\zeta(3)$  always vanishes.
 All possible products of previously occurring sequences arise, namely,  the product of the canonical
sequence for $\Mod_{0,5}$ with one of the five sequences for $\Mod_{0,7}$, and the square of the canonical sequence
for $\Mod_{0,6}$.
Other than that there 
are five  configurations 
$$
   [9, 2, 4, 1, 8, 6, 3, 5, 7]  \ , \   [9, 2, 4, 6, 8, 1, 3, 5, 7] \ , \  [9, 2, 5, 8, 1, 4, 7, 3, 6] $$
   $$
  [9, 2, 6, 1, 5, 7, 4, 8, 3]  \ ,    [9, 4, 8, 3, 7, 2, 6, 1, 5]  $$
which  give distinct irreducible sequences (i.e., not reducing to products of previously-occuring sequences) 
which are new linear forms in $1, \zeta(2), \zeta(2)^2, \zeta(2)^3$.
Generalising such families may lead to new estimates for the transcendence measure of $\pi^2$.

Finally, there are exactly four self-dual configurations 
$$[9, 2, 4, 1, 5, 7, 3, 8, 6]\ , \ [9, 2, 4, 1, 5, 8, 6, 3, 7] \ , \ [9, 2, 4, 6, 1, 7, 5, 8, 3]\ , \ [9, 2, 4, 7, 5, 1, 6, 8, 3] $$
which give (the same) linear forms in $1, \zeta(3), \zeta(3)^2$.

\subsubsection{n=10}
 There are 771 convergent configurations. They all (experimentally) vanish  in sub-leading weight.
Many cases are products of previously occurring sequences.
We find some new phenomena:
\begin{enumerate}[itemsep=2pt,parsep=0pt]
\item {\it Odd zeta values only}. A unique   configuration  $\pi^{10}_{\mathrm{odd}}$ 
which gives rise to a family of linear forms in $1 ,\zeta(3), \zeta(5), \zeta(7)$. It vanishes in weights $1,2,4,6$.

\item  {\it Dual of the previous case}. The  configuration $(\pi^{10}_{\mathrm{odd}})^{\vee}$  gives a family of linear forms in $1, \zeta(2), \zeta(4), \zeta_7$
where $\zeta_7$ denotes a multiple zeta value of weight 7. It  vanishes in weights $1,3,5,6$.

\item  {\it Double vanishing at next to leading order}. Two families of linear forms (represented  by $(10, 2, 4, 1, 6, 8, 5, 3, 9, 7)$  
for the first, and  its dual for the second)
which  give  linear forms in  quantities
$$1, \zeta(3), \zeta(4) \hbox{ and } \zeta_7$$
where $\zeta_7$ is an MZV  of weight $7$. These  families 
 vanish in weights $1,2,5, 6$.
\item  {\it Vanishing in the middle}. A unique family of self-dual linear forms  (represented by  $(10,2,4,1,6,3,8,5,9,7)$), 
 which give rise to linear forms  in the quantities
$$ 1, \zeta(2),  \zeta_5 \hbox{ and }\zeta_7 $$
where $\zeta_5$ and $\zeta_7$ are certain multiple zeta values of weights $5$ and $7$ respectively. Therefore this family 
vanishes in weights $1,3,4,6$.
\end{enumerate}

These (experimental) examples show that the vanishing phenomena  can be quite diverse.
It would be interesting to know if it is possible to find sequences of higher order with stronger vanishing properties.

\section{Appendix 3}

  The purpose of this section is to prove some vanishing of cohomology of the moduli space motives $m(A,B)$ in certain cases  which covers  Ap\'ery's theorems.

\subsection{Statement}

\begin{defn} Let $A\subset \overline{\Mod}_{0,S}$ denote a  boundary divisor. Say that $A$ is  \emph{cellular}
if there exists a dihedral structure $\delta$ on $S$ such that the irreducible components of $A$ are exactly the divisors at finite distance with respect to $\delta$.
Equivalently, 
$$A  = \bigcup_{S=S_1 \cup S_2} D_{S_1|S_2}$$
where the union is over all stable partitions of $S$ with $S_1, S_2$ consecutive for $\delta$.
\end{defn} 
A divisor $A$ is cellular if it is the Zariski closure of the boundary of a cell $\Dom_{\delta}$ for some $\delta$. 
The following theorem states that cellular divisors always fulfil cohomological vanishing in sub-maximal weights.

\begin{thm}\label{thmCohomVanish} Suppose that $A,B \subset \overline{\Mod}_{0,n}$ are   cellular boundary divisors  with no common irreducible  components.  Let $\ell = n-3$. Then
\begin{equation}
  \gr^W_2 m(A,B)  =  \gr^W_{2\ell-2}  m(A,B)  =0 
  \end{equation}
  and $\gr^W_0 m(A,B)$ and $\gr^W_{2\ell} m(A,B)$ are both $1$-dimensional.
\end{thm}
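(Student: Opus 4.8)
The plan is to push everything to the two ends of the weight filtration and exploit duality. Since $\overline{\Mod}_{0,S}$ is smooth projective and $A\cup B$ is normal crossing, the duality $(\ref{mABdual})$ gives $\gr^W_m m(A,B)\cong\gr^W_{2\ell-m}m(B,A)$, and the pair $(B,A)$ is again cellular with no common components. It therefore suffices to prove, for every cellular pair $(A,B)$ with no common components (there are none unless $\ell\geq 2$, consistent with $\mathcal{C}_4=0$), the two statements
\[
\gr^W_0 m(A,B)\cong\Q(0)\qquad\text{and}\qquad \gr^W_2 m(A,B)=0.
\]
Applying the first to both $(A,B)$ and $(B,A)$ yields the one-dimensionality of $\gr^W_0$ and $\gr^W_{2\ell}$, and applying the second to both yields $\gr^W_2=\gr^W_{2\ell-2}=0$.

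Write $X=\overline{\Mod}_{0,S}$, $U=X\backslash A$ and $B'=B\backslash(A\cap B)\subset U$, so $m(A,B)=H^{\ell}(U,B')$. For the weight-zero part I would apply the exact functor $\gr^W_0$ to the long exact sequence of the pair,
\[
\gr^W_0 H^{\ell-1}(U)\to\gr^W_0 H^{\ell-1}(B')\to\gr^W_0 H^{\ell}(U,B')\to\gr^W_0 H^{\ell}(U).
\]
As $U$ is smooth of dimension $\ell\geq 2$, the groups $H^{\ell-1}(U)$ and $H^{\ell}(U)$ have weights $\geq\ell-1\geq 1$, so the outer terms vanish and $\gr^W_0 m(A,B)\cong\gr^W_0 H^{\ell-1}(B')$. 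Removing the Cartier divisor $A\cap B$ from the projective normal crossing divisor $B$ changes cohomology only by Tate-twisted localisation terms, which die under $\gr^W_0$; hence $\gr^W_0 H^{\ell-1}(B')\cong\gr^W_0 H^{\ell-1}(B)$. Finally Deligne's weight spectral sequence identifies $\gr^W_0 H^{\bullet}(B)$ with the simplicial cohomology $H^{\bullet}(\Delta(B);\Q)$ of the dual complex $\Delta(B)$. Writing $B=A_{\delta'}$, the closure of $\Dom_{\delta'}$ is the associahedron, whose facets are the components of $B$ (lemma \ref{lempolesform}) and whose boundary complex is exactly $\Delta(B)$, an $(\ell-1)$-sphere; therefore $H^{\ell-1}(\Delta(B);\Q)=\Q$ and $\gr^W_0 m(A,B)\cong\Q(0)$.

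For the weight-two statement I would splice the long exact sequence of the pair together with Deligne's weight spectral sequences: the Gysin sequence for the smooth open $U$, whose $\Q(-1)$-contributions come from $H^2$ of the closed strata $A_I=\bigcap_{i\in I}A_i$, and the weight sequence for the singular divisor $B'$, whose $\Q(-1)$-contributions come from $H^2$ of the $B_J$ and of the mixed strata $A_I\cap B_J$. The outcome is that $\gr^W_2 m(A,B)$ is the cohomology, at one spot, of an explicit complex $C^{\bullet}$ built from the groups $H^2(A_I)$, $H^2(B_J)$, $H^2(A_I\cap B_J)$ with alternating Gysin and restriction differentials. By Keel's presentation of $H^{\bullet}(\overline{\Mod}_{0,m})$, each such $H^2$ is spanned by the classes of the boundary divisors of the corresponding stratum, so $C^{\bullet}$ is a purely combinatorial complex attached to the incidences among $\delta_f$, $\delta'_f$ and the strata. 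I would then prove that $C^{\bullet}$ is acyclic, the essential input again being that the two dual complexes $\Delta(A)$ and $\Delta(B)$ are spheres.

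The \textbf{main obstacle} is exactly this weight-two acyclicity. It is not forced by crude weight bounds: because $B'$ is singular, its cohomology genuinely carries low weights (already on $\Mod_{0,5}$ the pentagon of boundary curves produces a weight-zero class in $H^{\ell-1}(B')$), so one must actually analyse the differentials of $C^{\bullet}$ and check that every $\Q(-1)$-class supported on a stratum is exact. Concretely this means matching the Keel relations on the strata against the reduced simplicial cohomology of the boundary spheres $\Delta(A)$ and $\Delta(B)$. I expect the biarrangement formalism of Dupont \cite{Du1,Du2} to give the most transparent packaging of $C^{\bullet}$, with the cellular (sphere) hypothesis making the relevant relative nerve cohomology vanish in the one degree that matters.
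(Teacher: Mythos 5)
Your duality reduction and your weight-zero argument are sound and essentially coincide with the paper's own: identifying $\gr^W_0 m(A,B)$ with the top reduced cohomology of the dual complex of $B$, which is a sphere because the closure of $\Dom_{\delta'}$ is an associahedron, is exactly lemma \ref{lemcompletebroken} (the paper phrases it for the pair $(\overline{\Mod}_{0,n},B)$ rather than for $B\backslash (A\cap B)$, and runs duality in the opposite direction, proving $\gr^W_0$ and $\gr^W_{2\ell-2}$ directly; the difference is immaterial).

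The gap is the weight-two vanishing, which you yourself flag as the main obstacle and leave unproven --- but that vanishing \emph{is} the theorem: it is the part that forces the sub-maximal-weight coefficients to vanish, and everything else is routine. Moreover the input you expect to suffice (``$\Delta(A)$ and $\Delta(B)$ are spheres'') is not the one that makes the argument work. The paper never assembles your three-type complex $C^{\bullet}$. It applies the relative cohomology spectral sequence $(\ref{RelCohomSS})$ to $(\overline{\Mod}_{0,n}\backslash A,\, B\backslash (B\cap A))$: since $E_1^{p,q}$ has weights in $[q,2q]$, weight $2\ell-2$ in degree $\ell$ can only come from $E_1^{0,\ell}$ and $E_1^{1,\ell-1}$, and Poincar\'e duality on the smooth varieties $\overline{\Mod}_{0,n}\backslash A$ and $B_i\backslash (B_i\cap A)$ converts these two terms into $\big(\gr^W_2 H^{\ell}(\overline{\Mod}_{0,n},A)\big)^{\vee}$ and $\bigoplus_i \big(\gr^W_0 H^{\ell-1}(B_i, B_i\cap A)\big)^{\vee}$ (up to twist). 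Both are then killed by two combinatorial lemmas which are the real content and which your sketch lacks. First, lemma \ref{lembasis}: Keel's relations, organised by the dihedral structure $\delta$, give a basis of $H^2$ of $\overline{\Mod}_{0,n}$ and of each stratum of $A$ indexed by the divisors $D\in\delta_{\infty}$ at infinite distance, and this basis is functorial under restriction to strata; consequently the weight-two row of the spectral sequence for $(\overline{\Mod}_{0,n},A)$ splits as a direct sum over $D\in\delta_{\infty}$ of $\Q(-1)$-twists of the weight-zero nerve complexes of $D\cap A$ (lemma \ref{lemgrw2vanishes}). Second, lemma \ref{newlem}: if such a $D$ meets the cellular divisor $A$, the dual complex of $D\cap A$ is a \emph{cone} --- the partition defining $D$ cuts $S$ into $\delta$-consecutive blocks, the components of $D\cap A$ correspond to consecutive subsets of single blocks, and the component given by an entire block of size $\geq 2$ meets all the others --- hence contractible, so each such summand is acyclic. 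Divisors $D$ disjoint from $A$ contribute only to $E_1^{0,2}$, which matters only when $\ell=2$, where one checks there is no divisor at infinite distance disjoint from a cellular divisor on $\overline{\Mod}_{0,5}$. So the decisive points are the splitting along $\delta_{\infty}$ and the cone (not sphere) property of $D\cap A$; without them, asserting that $C^{\bullet}$ ought to be acyclic and hoping Dupont's formalism will deliver it leaves the theorem unproved.
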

In the case $n=5$ and $n=6$, there is a unique choice of divisors $A,B$ satisfying the conditions of the previous theorem, up to automorphisms of $\Mod_{0,n}$.
Denote the corresponding motives $m(A,B)$ by $m_5$ and $m_6$ respectively.  They  could be called  \emph{Ap\'ery motives} by the following corollary. 

\begin{cor} The basic cellular integrals for $n=5$ and $n=6$ ($\S\ref{ExN5},\S\ref{ExN6}$) are periods of $m_5, m_6$ respectively.  These  motives satisfy
\begin{eqnarray}
\gr^W_{\bullet} m_5  & \cong &  \Q(0) \oplus \Q(-2) \nonumber \\
\gr^W_{\bullet} m_6 &  \cong &  \Q(0) \oplus \Q(-3) \ .\nonumber 
\end{eqnarray}
In particular, this implies that the basic cellular integals for $n=5$ give linear forms in  $1, \zeta(2)$, and for $n=6$ in $1,\zeta(3)$ only.
\end{cor}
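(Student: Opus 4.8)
The plan is to deduce the corollary from Theorem~\ref{thmCohomVanish} in three stages: identify $m_5,m_6$ as the motives carrying the basic cellular integrands, read off their weights from the theorem, and translate the vanishing into the period statement. For the first stage, recall that for $n=5,6$ there is a unique convergent configuration (\S\ref{ExN5}, \S\ref{ExN6}), represented by a convergent pair $(\delta,\delta')$ with $\delta=\delta^0$. I would set
$$A = A_{\delta'} = \bigcup_{D\in \delta'_f} D \qquad\text{and}\qquad B = A_{\delta} = \bigcup_{D\in \delta_f} D,$$
which by Lemma~\ref{lempolesform} are the singular loci of $\omega_{\delta'}$ and $\omega_{\delta}$. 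Both are cellular by definition, and $\delta_f\cap\delta'_f=\emptyset$ by convergence, so they share no irreducible component; by the asserted uniqueness of such cellular pairs these are, up to an automorphism of $\Mod_{0,n}$, the divisors defining $m_5$ and $m_6$. Granting that $f_{\delta/\delta'}^N\,\omega_{\delta'}$ defines a class in $m(A,B)_{dR}$ for every $N$, the surjection of global forms used in \S\ref{sectCohom} yields $[f_{\delta/\delta'}^N\omega_{\delta'}]\in m(A,B)_{dR}$, and pairing it with $[\Dom_{\delta}]\in m(A,B)_B^{\vee}$ exhibits $I_{\delta/\delta'}(N)$ as a period of $m(A,B)$.

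The crux — and the step I expect to be the main obstacle — is verifying that $\mathrm{Sing}\big(f_{\delta/\delta'}^N\,\omega_{\delta'}\big) = \delta'_f = A$, i.e. that raising $f_{\delta/\delta'}$ to the power $N$ introduces no pole along a boundary divisor outside $A$. By Corollary~\ref{cor3.9} the function $f_{\delta/\delta'}$ vanishes along every $D\in\delta_f=B$, so no pole occurs there, and $\omega_{\delta'}$ is regular off $\delta'_f$; the divisors to control are therefore exactly those at infinite distance with respect to both $\delta$ and $\delta'$, along which I would compute $\ord_D f_{\delta/\delta'} = \I_D(\delta)-\I_D(\delta')$ using $(\ref{orderoff})$. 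For $n=5$ there are none, since the ten boundary divisors of $\overline{\Mod}_{0,5}$ are exhausted by $\delta_f$ and $\delta'_f$. For $n=6$ there are exactly seven: the three of type $(2,4)$ are covered by Proposition~\ref{prop4.2}, which gives $v_D(f_{\delta/\delta'}^N\omega_{\delta'})\geq 0$ along divisors isomorphic to $\overline{\Mod}_{0,n-1}$, while the four of type $(3,3)$ require a direct evaluation of $\I_D(\delta)-\I_D(\delta')$, and in every case one finds $\ord_D f_{\delta/\delta'}=0$, so no stray pole appears. This finite computation completes the identification of $I_{\delta/\delta'}(N)$ as a period of $m_5$, respectively $m_6$.

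With the motives identified, the weight structure is immediate. Since $m(A,B)$ is a mixed Tate motive over $\Z$, its de Rham realisation is graded in even degrees, so $\gr^W_{2k+1}=0$, and its weights lie in $[0,2\ell]$. Theorem~\ref{thmCohomVanish} makes $\gr^W_0$ and $\gr^W_{2\ell}$ one-dimensional, hence isomorphic to $\Q(0)$ and $\Q(-\ell)$ respectively, and forces $\gr^W_2=\gr^W_{2\ell-2}=0$. For $n=5$ (so $\ell=2$) the only possible nonzero even pieces are $\gr^W_0,\gr^W_2,\gr^W_4$, and killing $\gr^W_2$ leaves $\gr^W_{\bullet} m_5\cong\Q(0)\oplus\Q(-2)$. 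For $n=6$ (so $\ell=3$) the even pieces are $\gr^W_0,\gr^W_2,\gr^W_4,\gr^W_6$, and here $\gr^W_2=\gr^W_{2\ell-2}=\gr^W_4=0$, leaving $\gr^W_{\bullet} m_6\cong\Q(0)\oplus\Q(-3)$.

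Finally, the period consequence follows from the motivic period theorem of \S\ref{sectCohom} and its corollary: $I_{\delta/\delta'}(N)$ is a $\Q$-linear combination of multiple zeta values of weight $\leq\ell$, and vanishing of $\gr^W_{2m}m(A,B)$ removes the weight-$m$ values. For $n=5$ the admissible weights are $0$ and $2$, spanned by $1$ and $\zeta(2)$ (there being no weight-one value), giving linear forms in $1$ and $\zeta(2)$. For $n=6$ the vanishing of $\gr^W_4=\gr^W_{2\ell-2}$ eliminates the weight-two value $\zeta(2)$, leaving linear forms in $1$ and $\zeta(3)$ only, as claimed.
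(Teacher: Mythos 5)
Your argument is correct and is essentially the paper's own proof: you take $A=\mathrm{Sing}(\omega_{\delta'})=\delta'_f$ and $B=\delta_f$ as the cellular pair, verify via $(\ref{orderoff})$ and $(\ref{ordDomega})$ that $f_{\delta/\delta'}^N$ creates no poles outside $A$ (automatic for $n=5$ since $\delta_f\cup\delta'_f$ exhausts the ten boundary divisors, a finite check for $n=6$), and then read off the weight structure and the period consequence from Theorem \ref{thmCohomVanish}. Your explicit enumeration of the seven divisors at infinite distance with respect to both dihedral structures for $n=6$ (three of type $(2,4)$ handled by Proposition \ref{prop4.2}, four of type $(3,3)$ with $\ord_D f_{\delta/\delta'}=0$ by direct computation) merely spells out the verification that the paper's proof leaves to the reader.
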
 
\begin{proof} Use equations $(\ref{orderoff})$ and  $(\ref{ordDomega})$ to verify that for all $N \geq 0$,
$$\mathrm{Sing} (f_{\delta/\delta'}^N \omega_{\delta'})  = \mathrm{Sing} (\omega_{\delta'}) $$
is cellular for $n=5, 6$.   For $n=5$ this is trivial, for $n=6$ one must check the divisor of singularities using $(\ref{ordDineq})$ and $(\ref{ordDomega})$. Therefore the basic cellular integral $I_{\delta/\delta'}(N)$  is a period of $m(A,B)$ where
$A=  \mathrm{Sing} (\omega_{\delta'}) $, which is cellular by lemma \ref{lempolesform} and where $B$ is the Zariski closure of the boundary of the domain 
of integration $\Dom_{\delta}$, which is cellular by definition.
\end{proof} 

Unfortunately, the basic cellular integrals for $n\geq 7$ marked points have divisors which are not cellular, but are weakly 
cellular in the sense, for example, of remark \ref{remWeakCell}.
It would be interesting to generalise the previous theorem to cover this case (and even the case of generalised cellular integrals). This would explain the observed vanishing of subleading coefficients in all  cases.

  \subsection{Cohomology computations}
  
   We require the following general observations.  Let $X$ be a smooth projective scheme over a field $k$ of characteristic $0$, and $A\cup B$ a simple normal crossing divisor in $X$.
    Denote the irreducible components of $A$ and $B$ by $A_i$, $B_i$, respectively, and write 
  $C_{\emptyset} = X$, and $C_I = \cap_{i\in I} C_i$, whenever $C=A$ or $C=B$,  and  $I$ is an indexing set.  All cohomology has $\Q$ coefficients.
  
  The relative cohomology spectral sequence  is:  
  \begin{equation} \label{RelCohomSS}
   E_1^{p,q} = \bigoplus_{|I|=p} H^q (B_I \backslash (B_I \cap A) ) \ \Rightarrow \   H^{p+q}(X\backslash A, B \backslash (B\cap A))
   \end{equation}
  The other spectral sequence we need is the weight (or Gysin) spectral sequence
  \begin{equation} \label{GysinSS}
   E_1^{p,q} = \bigoplus_{|I|=-p} H^{2p+q} (A_I )(p) \ \Rightarrow \   H^{p+q}(X\backslash A)
   \end{equation}
  which degenerates at $E_2$ for reasons of purity.   The following lemmas give some control over the lowest graded weight pieces of moduli space motives.

We shall say that a boundary divisor $D \subset \overline{\Mod}_{0,S_1} \times \ldots \times \overline{\Mod}_{0,S_r}$ is at finite (resp. infinite) distance with respect to 
dihedral structures $\delta_1$ on $S_1$,  \ldots, $ \delta_r $ on $S_r$ if its irreducible components are of the form $ \overline{\Mod}_{0,S_1} \times \ldots  \times D_i \times \ldots  \times \overline{\Mod}_{0,S_r}$ where $D \subset \overline{\Mod}_{0,S_i}$ is at finite (resp. infinite) distance   with respect to $\delta_i$.

When $D$ is at finite distance, we say that $D$  is \emph{complete}
if its set of  irreducible components  consists of all divisors at finite distance, and \emph{broken} otherwise.

\begin{lem} \label{lemcompletebroken} Let $\delta_i$ be  a dihedral structure  on $S_i$,  for $i=1,2$, where $|S_i|\geq 3$. 
Let $C \subset \overline{\Mod}_{0,S_1} \times \overline{\Mod}_{0,S_2} $ be a boundary divisor  at finite distance with respect to $\delta_1,\delta_2$.  If $n=|S_1|+|S_2|-6>0$ then
$$\gr^W_0 H^n( \overline{\Mod}_{0,S_1} \times \overline{\Mod}_{0,S_2} , C) \cong \begin{cases} \Q(0) \quad \hbox{ if } C  \hbox{ is complete }  \\  0 \  \qquad \hbox{  if } C \hbox{ is broken }\end{cases}\ .$$
\end{lem}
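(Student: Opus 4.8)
The plan is to compute $\gr^W_0 H^n(X,C)$, where $X = \overline{\Mod}_{0,S_1}\times\overline{\Mod}_{0,S_2}$, by feeding the pair $(X,C)$ into the relative cohomology spectral sequence $(\ref{RelCohomSS})$ with $A=\emptyset$ and $B=C$, and then extracting the lowest weight. Writing $C_1,\dots,C_r$ for the irreducible components of $C$ and $C_I=\bigcap_{i\in I}C_i$ (with $C_\emptyset=X$), the spectral sequence reads $E_1^{p,q}=\bigoplus_{|I|=p}H^q(C_I)\Rightarrow H^{p+q}(X,C)$. It is a spectral sequence of mixed Tate motives, so the weight filtration is strict and I may apply $\gr^W_0$ termwise. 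Each $C_I$ is a product of moduli spaces $\overline{\Mod}_{0,T}$, hence smooth, projective and connected; therefore $H^q(C_I)$ is pure of weight $q$, so $\gr^W_0 H^q(C_I)=0$ for $q>0$ and $\gr^W_0 H^0(C_I)=\Q(0)$ whenever $C_I\neq\emptyset$. Thus the $\gr^W_0$ spectral sequence is concentrated in the row $q=0$, it degenerates, and $\gr^W_0 H^n(X,C)$ equals the degree-$n$ cohomology of the complex $\gr^W_0 E_1^{\bullet,0}$.

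Next I would identify this complex. In the row $q=0$ the term $\gr^W_0 E_1^{p,0}$ is a copy of $\Q(0)$ for each subset $I$ of components with $|I|=p$ and $C_I\neq\emptyset$, and the $d_1$ differential is the alternating sum of restriction maps. Let $\mathcal{N}(C)$ be the nerve (dual complex) of $C$: its vertices are the components $C_i$, and its $(p-1)$-simplices are the subsets $I$, $|I|=p$, with $C_I\neq\emptyset$. Then $\gr^W_0 E_1^{\bullet,0}$ is exactly the augmented simplicial cochain complex of $\mathcal{N}(C)$ (the $p=0$ term $\gr^W_0 H^0(X)=\Q(0)$ playing the role of the augmentation), so
$$\gr^W_0 H^n(X,C)\ \cong\ \widetilde H^{\,n-1}(\mathcal{N}(C);\Q)\otimes\Q(0).$$
The lemma is thereby reduced to a purely combinatorial statement about $\mathcal{N}(C)$.

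Now I would analyse $\mathcal{N}(C)$. By the intersection criterion of \S\ref{sectCompact}(3), the components of $C$ are indexed by certain chords of the $|S_1|$-gon and of the $|S_2|$-gon, and a set of them has non-empty common intersection iff the chords are pairwise non-crossing, while components coming from different factors always meet. Hence for the \emph{complete} divisor $\mathcal{N}(C)$ is the simplicial join $\mathcal{N}_1 * \mathcal{N}_2$, where $\mathcal{N}_i$ is the complex of pairwise non-crossing chords of the $|S_i|$-gon. Each $\mathcal{N}_i$ is the boundary complex of the polar dual of the Stasheff associahedron, hence a simplicial sphere $S^{\ell_i-1}$ with $\ell_i=|S_i|-3$; therefore $\mathcal{N}(C)\cong S^{\ell_1-1}*S^{\ell_2-1}=S^{n-1}$ and $\widetilde H^{\,n-1}=\Q$, giving $\Q(0)$. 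For a \emph{broken} $C$, $\mathcal{N}(C)$ is the full subcomplex of this same sphere on a proper subset of its vertices, so it omits at least one chord $v$.

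Finally, to handle the broken case I would show $\widetilde H^{\,n-1}(\mathcal{N}(C);\Q)=0$. Over $\Q$ it suffices, by universal coefficients, to show $\widetilde H_{n-1}(\mathcal{N}(C);\Q)=0$. Since $\mathcal{N}(C)$ has dimension at most $n-1$ it carries no $n$-chains, so its top homology equals its group of $(n-1)$-cycles, which injects into the cycle group of the ambient sphere $S^{n-1}$; the latter is one-dimensional, spanned by the fundamental class, namely the signed sum of \emph{all} top simplices. As the omitted vertex $v$ lies in some top simplex of $S^{n-1}$ (every chord extends to a triangulation, so $S^{n-1}$ is a pure pseudomanifold), that fundamental class is not supported on $\mathcal{N}(C)$, forcing the cycle group of $\mathcal{N}(C)$ to vanish. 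The main obstacle is precisely this last step together with the combinatorial identification of the nerve: one must be certain that $\mathcal{N}(C)$ really is the full subcomplex of a triangulated sphere whose fundamental cycle meets every vertex, which is where the associahedron/join structure and the fact that every chord sits in a triangulation are essential; the spectral sequence degeneration itself is routine purity.
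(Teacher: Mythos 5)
Your proposal is correct and follows essentially the same route as the paper's own proof: the identical relative cohomology spectral sequence with $A=\emptyset$, $B=C$, degeneration by purity, reduction of $\gr^W_0 H^n$ to the reduced cohomology of the nerve (the paper's dual complex $T\subset S$), identification of this complex in the complete case with a simplicial $(n-1)$-sphere arising from Stasheff polytope combinatorics, and vanishing of top reduced cohomology for a proper full subcomplex in the broken case. The only cosmetic differences are that you build the sphere as a join of the two dual associahedron boundaries rather than as the dual complex of the product polytope $X_{\delta_1}\times X_{\delta_2}$ (these are the same complex), and you establish the broken-case vanishing via the fundamental-cycle argument where the paper invokes the embedding of the subcomplex into a punctured sphere.
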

\begin{proof} The relative cohomology spectral sequence, applied to $X=  \overline{\Mod}_{0,r} \times \overline{\Mod}_{0,s}$,  
$A= \emptyset$, $B=C$, degenerates at $E_2$ because $E_1^{p,q}$ is pure of weight $q$ in this case.  This 
 implies that  $\gr^W_0 H^n( \overline{\Mod}_{0,r} \times \overline{\Mod}_{0,s} , C)$ is the $n^{\mathrm{th}}$ cohomology group of the complex
\begin{equation}\label{grw0complex}
E_1^{\bullet, 0}  \ =  \ H^0(C_{\emptyset}) \rightarrow   \bigoplus_{|I|=1} H^0(C_I) \rightarrow \bigoplus_{|I|=2} H^0(C_{I})\rightarrow \cdots  \rightarrow \bigoplus_{|I|=n} H^0(C_{I})
\end{equation}
By assumption, the irreducible components of $C$ are in one-to-one correspondence with  a subset of the set of  facets of the product  $X_{\delta_1} \times X_{\delta_2}$
where  $X_{\delta}= \overline{\Dom}_{\delta}$ denotes the closure of a cell in the analytic topology. They have the combinatorial structure of Stasheff polytopes. Consider the simplical complex $S$ which is dual to the one generated by the facets of $X_{\delta_1} \times X_{\delta_2}$. Then the  $0$-simplices of $S$
are indexed by facets of $X_{\delta_1} \times X_{\delta_2}$, the 1-simplices by codimension 2 faces of $X_{\delta_1} \times X_{\delta_2}$, and so on.
Since each $X_{\delta_i}$ is homotopic to a ball, the same is true of $X_{\delta_1} \times X_{\delta_2}$, and therefore $S$ is homotopic to its boundary, which is a  sphere $\mathbb{S}^{n-1}$ of dimension $n-1$.
The complex $(\ref{grw0complex})$, shifted by one to the left,  computes the reduced cohomology of the simplical subcomplex $T\subset S$ whose $0$-simplices correspond to irreducible
divisors of $C$.  Therefore, if  $C$ is complete, then $T=S$ and  the $(n-1)^{\mathrm{th}}$ reduced cohomology group of $S$  is that of $\mathbb{S}^{n-1}$ and one-dimensional.
If $C$ is broken,  then $T\subsetneq S$ is a  simplicial subcomplex of a punctured $n-1$-sphere, and homotopic to a simplical complex in $\R^{n-2}$.  Therefore its $(n-1)^{\mathrm{th}}$ cohomology group vanishes.
\end{proof}

The following lemma provides a canonical basis for $H^2(\overline{\Mod}_{0,S})$ for every dihedral ordering $\delta$ on $S$.
\begin{lem} \label{lembasis} Let $|S_i| \geq 3$,  with   dihedral orderings $\delta_i$ on $S_i$, for $i=1,\ldots, r$. Then
$$H^2(\overline{\Mod}_{0,S_1} \times \ldots \times \overline{\Mod}_{0,S_r}) \cong \bigoplus_{D\in \delta_{\infty}}  [D]\Q(-1) $$
where $\delta_{\infty}$ is the set of irreducible divisors at infinite distance, and for each $D\in \delta_{\infty}$,  $[D]$ is the image of the canonical class of $H^0(D)$ under the Gysin 
map 
$$H^0(D)(-1) \rightarrow H^2(\overline{\Mod}_{0,S_1}\times \ldots \times \overline{\Mod}_{0,S_r})\ .$$
\end{lem}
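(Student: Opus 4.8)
The plan is to reduce to a single moduli factor, show that the boundary divisor classes span $H^2$, and then use Keel's relations to trade the classes at finite distance for those at infinite distance, finally matching the resulting spanning set against the second Betti number.

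First I would reduce to the case $r=1$. Each $\overline{\Mod}_{0,S_i}$ has vanishing odd cohomology (its cohomology is of Hodge--Tate type), so the K\"unneth formula gives
$$H^2(\overline{\Mod}_{0,S_1}\times\cdots\times\overline{\Mod}_{0,S_r}) \cong \bigoplus_{i=1}^r H^2(\overline{\Mod}_{0,S_i}),$$
and under this isomorphism the class of a product divisor $\overline{\Mod}_{0,S_1}\times\cdots\times D\times\cdots\times\overline{\Mod}_{0,S_r}$, with $D$ at infinite distance in the $i$-th factor, is exactly the image of $[D]\in H^2(\overline{\Mod}_{0,S_i})$. Hence it suffices to prove the statement for a single $S=S_i$ and $\delta=\delta_i$.

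Second, I would show the boundary classes span $H^2(\overline{\Mod}_{0,S})$. Apply the Gysin spectral sequence $(\ref{GysinSS})$ with $X=\overline{\Mod}_{0,S}$ and $A$ the full boundary, so $X\backslash A=\Mod_{0,S}$. Each $A_i$ is a product of compactified moduli spaces, hence has no odd cohomology, so the only terms of weight $q=2$ are $E_1^{0,2}=H^2(\overline{\Mod}_{0,S})$ and $E_1^{-1,2}=\bigoplus_D H^0(D)(-1)$, the differential $d_1$ being the cycle class map $\gamma:1_D\mapsto[D]$. Thus $\gr^W_2 H^2(\Mod_{0,S})\cong\mathrm{coker}(\gamma)$. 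But in simplicial coordinates $\Mod_{0,S}$ is the complement of the affine arrangement $\{t_i=0,\,t_i=1,\,t_i=t_j\}$, whose cohomology is pure Tate with $H^2$ of weight $4$; hence $\gr^W_2 H^2(\Mod_{0,S})=0$, $\gamma$ is surjective, and $H^2(\overline{\Mod}_{0,S})=\langle[D]\rangle$ over all boundary divisors $D$.

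Third, the combinatorial heart, I would eliminate the finite-distance classes using Keel's relations, obtained by pulling back the linear equivalence of the three boundary points of $\overline{\Mod}_{0,4}\cong\Pro^1$ along the map remembering four markings $a,b,c,d$:
$$\sum_{ab\mid cd}[D]=\sum_{ac\mid bd}[D]=\sum_{ad\mid bc}[D].$$
Take $\delta=\delta^0$ and a finite-distance divisor $D_{A|A^c}$ with $A=\{i+1,\ldots,j\}$ a proper arc, and apply the relation to $(a,b,c,d)=(i,i+1,j,j+1)$ (four distinct markings when $2\le|A|\le n-2$). A short incidence check shows that \emph{no} finite-distance divisor separates $\{i,j\}$ from $\{i+1,j+1\}$, since either arc of the $n$-gon joining $i$ and $j$ contains one of $i+1,j+1$; so the middle sum lies in $\langle[D]\rangle_{D\in\delta_\infty}$, whereas among divisors separating $\{i,j+1\}$ from $\{i+1,j\}$ the unique finite-distance one is $D_{A|A^c}$ itself. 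Equating the two sums writes $[D_{A|A^c}]$ as a rational combination of infinite-distance classes, so $\{[D]:D\in\delta_\infty\}$ spans $H^2(\overline{\Mod}_{0,S})$. Finally, an elementary count gives $|\delta_\infty|=(2^{n-1}-n-1)-\tfrac{n(n-3)}2=2^{n-1}-\binom n2-1$, which equals $\dim_\Q H^2(\overline{\Mod}_{0,n})$ by the standard computation of its Betti numbers; a spanning set of this cardinality is a basis, and its identification with the Gysin images $[D]\Q(-1)$ is built into $\gamma$. The main obstacle is precisely this third step: one must verify the exact incidence of boundary divisors in each Keel grouping, and one must know (or prove independently, e.g.\ from the iterated blow-up description of $\overline{\Mod}_{0,n}$) that $|\delta_\infty|=b_2$, since that equality is what upgrades the spanning statement to linear independence.
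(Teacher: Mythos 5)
Your proposal is correct and follows essentially the same route as the paper's proof: the paper likewise reduces to $r=1$ by the K\"unneth formula, uses Keel's result that the boundary classes span $H^2(\overline{\Mod}_{0,S})$ together with the four-point relations, applies the relation to precisely the quadruple $(i,i+1,j,j+1)$ with the same incidence check (exactly one finite-distance divisor in one grouping, none in the other), and concludes by the same cardinality count $|\delta_\infty| = 2^{n-1}-\binom{n}{2}-1 = \dim_\Q H^2(\overline{\Mod}_{0,n})$, citing Keel for the Betti number. The only cosmetic difference is that you re-derive the two inputs the paper simply cites from Keel --- the spanning statement (via the Gysin spectral sequence and purity of $H^{\bullet}(\Mod_{0,S})$ as an arrangement complement) and the relations themselves (via pullback along the forgetful map to $\overline{\Mod}_{0,4}$) --- which makes your argument slightly more self-contained but does not change the structure of the proof.
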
 

\begin{proof} Let $r=1$. By the results of Keel \cite{Keel}, the Gysin map
$$\bigoplus_{D \in \delta_f \cup \delta_{\infty}}  H^0(D)(-1)  \To H^2(\overline{\Mod}_{0,S}) $$
where the left-hand sum is over all irreducible boundary divisors $D$, is surjective, and its kernel is generated by
relations
\begin{equation}  \label{Keelrel}
\sum_{\{j,k\}\in S_1, \{i,l\}\in S_2}  [D_{S_1|S_2}]  =  \sum_{\{i,k\}\in S_1, \{j,l\}\in S_2}  [D_{S_1|S_2}]
\end{equation} 
for all sets  of four distinct elements $i,j,k,l \in S$. We first show that
\begin{equation} \label{DinftoH2mos} \bigoplus_{D \in  \delta_{\infty}}  H^0(D)(-1)  {\To} H^2(\overline{\Mod}_{0,S}) 
\end{equation} 
is surjective. For this,   choose $i,j,k,l\in S$ such that $i<j<k<l$ are in order with  respect to $\delta$ and the pairs $\{i,j\}$ and $\{k,l\}$ are consecutive,
and $\{j,k\}$ ,$\{l,i\}$ are not consecutive.  Then there is a single term in $(\ref{Keelrel})$ which corresponds to a divisor in $\delta_f$, namely
$D_{S_1|S_2}$ with $S_1 = \{j,j+1, \ldots, k\}, S_2 = \{l, l+1, \ldots, i\}$. This proves that the $[D]_{S_1|S_2}$ for $S_1, S_2$ consecutive are linear combinations of $[D]$ with $D \in \delta_{\infty}$. 
On the other hand, we know from \cite{Keel}, page 550,  that the dimension of   $H^2(\overline{\Mod}_{0,S}) $ is $2^{n-1} - \binom{n}{2} -1$, which 
by a straightforward calculation, is the cardinality of $\delta_{\infty}$, so $(\ref{DinftoH2mos})$ is injective.
The case when $r\geq 2$ follows from the K\"unneth formula. 
\end{proof}

The basis is functorial in the following sense.  Let  $S$ have a dihedral structure $\delta$, and let 
$B$ be an irreducible divisor  at finite distance on  $\overline{\Mod}_{0,S}$ corresponding to a stable partition $S_1\cup S_2 $ of $S$.   Then $B = \overline{\Mod}_{0,S_1 \cup x} \times \overline{\Mod}_{0,S_2 \cup x}$ and  $S_1 \cup x$ and $S_2 \cup x$ inherit canonical dihedral structures from $\delta$.
The restriction morphism in this basis is 
$$[D] \mapsto [D\cap B]: H^2(\overline{\Mod}_{0,S}) \To H^2(B)\ .$$

The key geometric property of cellular divisors is the following.
\begin{lem} \label{newlem} Let $C \subset \overline{\Mod}_{0,S}$ be a cellular boundary divisor and let
$D$ be  an irreducible boundary divisor such that $D\subsetneq C$ and  $C\cap D\neq \emptyset$.  Then  for all $k\geq 0$,
$$\gr^W_0 H^k (D, D\cap C) = 0\ . $$
\end{lem}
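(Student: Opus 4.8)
I read the hypothesis as follows: $C$ is cellular for a dihedral structure $\delta$ on $S$, and $D$ is an irreducible boundary divisor which is \emph{not} one of the components of $C$ (equivalently, since $C$ consists of all divisors at finite distance for $\delta$, the divisor $D$ is at infinite distance with respect to $\delta$), while $D\cap C\neq\emptyset$. The plan is to reduce the statement to the $\Q$-acyclicity of a simplicial complex built from the components of $D\cap C$, and then to exhibit that complex as a join one of whose factors is a cone. First I would run the relative cohomology spectral sequence $(\ref{RelCohomSS})$ with $X=D$, $A=\emptyset$ and $B=D\cap C$, exactly as in the proof of Lemma \ref{lemcompletebroken}. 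Every stratum $C_I$ is a product of spaces $\overline{\Mod}_{0,\bullet}$, hence smooth, projective and irreducible, so $E_1^{p,q}$ is pure of weight $q$. Consequently $\gr^W_0 H^k(D,D\cap C)$ is the $k$-th cohomology of the weight-zero row $E_1^{\bullet,0}=[\,H^0(D)\to\bigoplus_{|I|=1}H^0(C_I)\to\cdots]$, which is the augmented simplicial cochain complex of the nerve $T$ of the set of irreducible components of $D\cap C$. Thus
\begin{equation}
\gr^W_0 H^k(D,D\cap C)\;\cong\;\widetilde{H}^{\,k-1}(T;\Q),
\end{equation}
and it suffices to prove that $T$ is $\Q$-acyclic.

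Next I would identify $T$ combinatorially. Writing $D=D_{T_1|T_2}\cong\overline{\Mod}_{0,T_1\cup x}\times\overline{\Mod}_{0,T_2\cup x}$, every irreducible component of $D\cap C$ is a boundary divisor of this product and hence lives in a single factor. Using the intersection rule (fact (3) of \S\ref{sectCompact}) together with the criterion that $D_{V|S\setminus V}$ lies in $C$ iff $V$ is an arc of $\delta$, one checks that the components in the first factor are exactly the $D_{V|(T_1\setminus V)\cup x}\times\overline{\Mod}_{0,T_2\cup x}$ with $V\subseteq T_1$ an arc of $\delta$ of length $\geq 2$, and symmetrically in the second factor. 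Since the nerve of a family of divisors coming from two factors of a product is the join of the two factor-nerves, this yields $T=T^{(1)}\ast T^{(2)}$, where $T^{(i)}$ is the nerve of the arcs $V\subseteq T_i$ of $\delta$ with $|V|\geq 2$ (the upper bound $|V|\le|T_i|-1$ being automatic).

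The geometric heart is to show that each nonempty $T^{(i)}$ is contractible. Because $D$ is at infinite distance, $T_i$ is \emph{not} an arc of $\delta$: it is a disjoint union of at least two maximal $\delta$-runs $J_1,\dots,J_r$, $r\geq2$. Any admissible arc $V\subseteq T_i$ is therefore contained in a single $J_m$. If $T^{(i)}\neq\emptyset$, some run $J_m$ has $|J_m|\geq2$, and I claim $J_m$ is a cone point: the divisor attached to $J_m$ is nested with every $V\subseteq J_m$ and disjoint from every $V$ lying in another run, hence compatible with every vertex and adjoinable to every simplex, so $T^{(i)}$ is a cone and therefore contractible. Finally, the hypothesis $C\cap D\neq\emptyset$ guarantees that $T$ has a vertex, so at least one $T^{(i)}$ is nonempty and hence contractible; by the Künneth formula for joins, $\widetilde{H}^{\ast}(T^{(1)}\ast T^{(2)};\Q)=0$, which gives the claim for all $k\geq0$.

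The main obstacle I expect is the bookkeeping of the second and third paragraphs: correctly matching components of $D\cap C$ with arcs of $\delta$ inside each $T_i$ through the dictionary between codimension-two strata of $\overline{\Mod}_{0,S}$ and pairs of nested stable partitions, and verifying that the full maximal run is genuinely a cone apex, i.e. that it crosses none of the other admissible arcs. It is also worth double-checking that the one degenerate case, in which $\delta$ alternates perfectly between $T_1$ and $T_2$ so that both $T^{(i)}$ are empty and $T=\emptyset$ has $\widetilde{H}^{-1}\neq0$, is exactly the excluded case $D\cap C=\emptyset$; this is what makes the hypothesis $C\cap D\neq\emptyset$ both necessary and sufficient for the argument.
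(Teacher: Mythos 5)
Your argument is correct, and it follows the paper's proof in its two main steps: the relative cohomology spectral sequence $(\ref{RelCohomSS})$ with $X=D$, $A=\emptyset$, $B=D\cap C$ reduces the lemma to the $\Q$-acyclicity of the nerve of the set of irreducible components of $D\cap C$, and those components are identified with the arcs of $\delta$ of length $\geq 2$ contained in the maximal $\delta$-runs into which the two parts $V_1,V_2$ of the partition defining $D$ decompose. Where you diverge is the final topological step. The paper does not decompose the nerve as a join: it picks any single maximal run $I_k$ with $|I_k|\geq 2$ and observes that the corresponding divisor is compatible with every other vertex --- with arcs inside $I_k$ by nestedness, and with arcs inside any other run, \emph{on either side of the partition}, by disjointness --- so the whole nerve is already a cone with apex $I_k$. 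Your own disjointness observation proves exactly this stronger statement: compatibility-by-disjointness does not care whether the other arc lies in $V_1$ or in $V_2$, so the join decomposition $T=T^{(1)}\ast T^{(2)}$, the factorwise coning, and the K\"unneth formula for joins can all be bypassed, although they are correct and the join structure (coming from $D\cong \overline{\Mod}_{0,V_1\cup x}\times\overline{\Mod}_{0,V_2\cup x}$) is genuinely there; what your route buys is an explicit record of the product geometry, what the paper's buys is brevity. One point left implicit in both arguments: to pass from ``the apex is compatible with every vertex'' to ``the nerve is a cone'', one uses that boundary divisors of $\overline{\Mod}_{0,S}$ have nonempty common intersection as soon as their stable partitions are pairwise compatible, so the nerve is a flag complex. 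Finally, you and the paper use the hypothesis $C\cap D\neq\emptyset$ in the same way, to exclude the perfectly alternating case in which the nerve is empty and $\widetilde{H}^{-1}\neq 0$.
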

\begin{proof} Apply the relative cohomology spectral sequence with $X=D$, $A =\emptyset$ and $B=C\cap D$.
  It suffices to show that the complex
\begin{equation}\label{H0D} 
E_1^{\bullet,0 } = H^0(D) \rightarrow \bigoplus_i H^0(D \cap C_i) \rightarrow \bigoplus_{i,j} H^0(D\cap C_{i,j}) \rightarrow \ldots
\end{equation}
is acyclic.   Consider the simplicial complex $T$ whose $k$-dimensional simplices are given by codimension $k$ intersections of  irreducible components $(D\cap C)_I$, where $|I|=k$. Then $(\ref{H0D})$ computes the reduced homology of $T$.
Now $C$ is cellular with respect to a dihedral ordering $\delta$, and its irreducible components are indexed by  stable partitions  of $S$ which are consecutive with respect to $\delta$. Since $D\subsetneq C$, the divisor $D$ corresponds to a partition $V_1 \cup V_2$ of $S$ where $V_i$  are not consecutive with respect to $\delta$. The set  $S$ can be written as a disjoint union of subsets 
$S = I_1 \cup I_2 \cup \ldots \cup I_r$, where each $I_k$ is maximal such that its elements  are consecutive for $\delta$, and each $I_k$ is alternately contained in either $V_1$ or $V_2$.
The irreducible  components of $C\cap D$ are indexed by stable partitions $S= P \cup (S\backslash P)$ where $|P|\geq 2$ and $P$ ranges
over all consecutive subsets of  each $I_k$ (see example below). Choose a $k$ such that $|I_k|\geq 2$. Then the divisor  corresponding to $P=I_k$ intersects the divisors corresponding to all other $P_i$. It follows that the simplical complex $T$ is a cone with apex $P$, and is therefore contractible.
\end{proof}
\begin{example} Let $S= \{1,\ldots, 7\}$ and $\delta = \delta^0$. Then irreducible components of  $C$ are divisors corresponding to stable partitions $S_1, S_2$ of $S$
where $S_1, S_2$ consist of consecutive elements. Let $D$ be the divisor corresponding to the stable partition $\{1,4\} \cup \{2,3,5,6,7\}$. Then
write $S= \{1\} \cup \{2,3\} \cup \{4\} \cup \{5,6,7\}$, and  the  irreducible components of $D\cap C$
correspond to stable partitions  $P_i \cup (S\backslash P_i)$ where
$$P_1 = \{2,3\}   \quad , \quad P_2 = \{5,6\} \quad  ,  \quad P_3 = \{6,7\} \quad , \quad P_4 = \{5,6,7\}$$
 Denote by  $P_{ij} = P_i \cap P_j$, and $P_{ijk} = P_{ij} \cap P_k$. The simplical
complex  $T$   has vertices
 $P_1,\ldots, P_4$,  lines 
  $P_{12}, P_{13}, P_{14}, P_{24}, P_{34}$ and  2-faces  $P_{134}, P_{124}$.  Since $P_4$ meets all other $P_i$, the complex $T$ is a cone, with apex $P_4$, over the subcomplex generated by $P_1,P_2,P_3$.
\end{example}

\begin{lem} \label{lemgrw2vanishes} Let $C\subset \overline{\Mod}_{0,S}$ be a cellular boundary divisor with respect to $\delta$. Then 
$$\gr^W_2 H^{k+2}(\overline{\Mod}_{0,S}, C) = 0 \qquad \hbox{ for all } k>0$$
and $\gr^W_2 H^2(\overline{\Mod}_{0,S}, C)$ has dimension equal to the number of  irreducible divisors  $D \in \delta_{\infty}$  on $\overline{\Mod}_{0,S}$
which do not meet $C$.
\end{lem}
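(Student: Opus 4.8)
The plan is to read off $\gr^W_2 H^{\bullet}(\overline{\Mod}_{0,S},C)$ from the weight-$2$ row of the relative cohomology spectral sequence $(\ref{RelCohomSS})$, applied with $X=\overline{\Mod}_{0,S}$, $A=\emptyset$ and $B=C$. Denote the irreducible components of $C$ by $C_1,\dots,C_r$ and set $C_I=\bigcap_{i\in I}C_i$, with $C_\emptyset = X$. Each $C_I$ is a product of compactified moduli spaces, hence smooth and projective, so $E_1^{p,q}=\bigoplus_{|I|=p}H^q(C_I)$ is pure of weight $q$. Exactly as in the proof of Lemma $\ref{lemcompletebroken}$, purity forces all higher differentials to vanish, the sequence degenerates at $E_2$, and therefore
\[\gr^W_2 H^{k+2}(\overline{\Mod}_{0,S},C)\;\cong\;H^k(K^\bullet),\qquad K^p=\bigoplus_{|I|=p}H^2(C_I),\]
where the differentials of $K^\bullet$ are the alternating sums of the restriction maps and $K^0=H^2(\overline{\Mod}_{0,S})$. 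The whole computation is thus reduced to understanding the cohomology of this complex of $H^2$'s.

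The next step is to diagonalise $K^\bullet$ with respect to the canonical basis of Lemma $\ref{lembasis}$. For each stratum $C_I$ that lemma gives a basis of $H^2(C_I)$ indexed by the boundary divisors at infinite distance on $C_I$, and I would show that each such divisor is $D\cap C_I$ for a \emph{unique} $D\in\delta_\infty$ with $D\cap C_I\neq\emptyset$. In one direction, the cell of $C_I$ is a face of $\overline{\Dom}_\delta$, so a divisor $D\in\delta_\infty$, which misses $\overline{\Dom}_\delta$, must meet $C_I$ in a divisor missing that face, i.e.\ at infinite distance; conversely an infinite-distance divisor of $C_I$ is a codimension-one stratum $C_I\cap D'$ cut out by one further boundary divisor $D'$, and $D'$ cannot be at finite distance because a non-crossing finite-distance chord would cut a genuine subface of the cell. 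By the functoriality of the basis noted after Lemma $\ref{lembasis}$, the restriction map sends $[D\cap C_I]$ to $[D\cap C_{I\cup\{j\}}]$ when the latter is non-empty, and to $0$ otherwise. Hence the differentials are diagonal in the label $D$, and $K^\bullet$ splits as $\bigoplus_{D\in\delta_\infty}K_D^\bullet$, where $K_D^p$ is spanned by the classes $[D\cap C_I]$ with $|I|=p$ and $D\cap C_I\neq\emptyset$.

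It then remains to evaluate each summand. I would identify $K_D^\bullet$ with the weight-$0$ row of the relative cohomology spectral sequence for the pair $(D,D\cap C)$: both are the augmented simplicial cochain complex of the nerve of the covering of $D\cap C$ by $\{D\cap C_i\}$, so that $H^k(K_D^\bullet)\cong\gr^W_0 H^k(D,D\cap C)$. If $D$ meets $C$, then $D$ is an infinite-distance divisor with $D\cap C\neq\emptyset$ and Lemma $\ref{newlem}$ gives $\gr^W_0 H^k(D,D\cap C)=0$ for all $k\ge0$. If instead $D\cap C=\emptyset$, the complex $K_D^\bullet$ is concentrated in degree $0$, where it equals $\gr^W_0 H^0(D)\cong\Q$ by connectedness, and vanishes in positive degrees by purity of $H^k(D)$. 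Summing over $D\in\delta_\infty$ yields $\gr^W_2 H^{k+2}(\overline{\Mod}_{0,S},C)=0$ for every $k>0$, while for $k=0$ only the divisors $D$ with $D\cap C=\emptyset$ contribute, each a copy of $\Q$; this gives $\dim\gr^W_2 H^2(\overline{\Mod}_{0,S},C)$ equal to the number of $D\in\delta_\infty$ not meeting $C$, as required.

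The genuine work is concentrated in the diagonalisation step: establishing the bijection between the infinite-distance divisors of a stratum $C_I$ and the global infinite-distance divisors meeting it, and checking that restriction is diagonal in the Keel basis. Once this is secured, the vanishing is supplied for free by Lemma $\ref{newlem}$ (which shows the relevant nerves are cones) together with the purity of the $H^2(C_I)$. A minor point to verify is the compatibility of signs between the differentials of $K_D^\bullet$ and those of the $(D,D\cap C)$ complex, which can be arranged by a consistent ordering of the indexing sets.
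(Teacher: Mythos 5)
Your proposal is correct and follows essentially the same route as the paper's own proof: the relative cohomology spectral sequence with $B=C$ degenerating at $E_2$ by purity, the splitting of the row $E_1^{\bullet,2}$ into summands indexed by $D\in\delta_{\infty}$ via the basis of Lemma~\ref{lembasis} and its functoriality under restriction, and the identification of each summand with the complex $(\ref{H0D})$, which is acyclic by Lemma~\ref{newlem} when $D\cap C\neq\emptyset$ and contributes a single $\Q(-1)$ in degree $0$ when $D\cap C=\emptyset$. The extra detail you supply (the bijection between infinite-distance divisors of a stratum $C_I$ and global infinite-distance divisors meeting it) is exactly what the paper leaves implicit in the functoriality remark following Lemma~\ref{lembasis}.
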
 

\begin{proof} Apply the relative cohomology spectral sequence,  with $X=\overline{\Mod}_{0,S}$, $A=\emptyset$ and $B=C$.
It degenerates at $E_2$  by purity. Thus   $\gr^W_2 H^{k+2}(\overline{\Mod}_{0,S}, C)$ is the
 $k^{\mathrm{th}}$ cohomology group of the complex
$$ E_1^{\bullet,2} = H^2(\overline{\Mod}_{0,n}) \rightarrow \bigoplus_i  H^2(C_i) \rightarrow \bigoplus_{i,j}  H^2(C_{i,j}) \rightarrow \ldots $$
By  lemma $\ref{lembasis}$, this complex splits as a direct sum of complexes, one for each irreducible divisor $D \in \delta_{\infty}$. The previous complex is isomorphic to 
$$\bigoplus_{D \in \delta_{\infty}}\Big( H^0(D) \rightarrow \bigoplus_i H^0(D \cap C_i) \rightarrow \bigoplus_{i,j} H^0(D\cap C_{i,j}) \rightarrow \ldots \Big)\otimes \Q(-1)$$
This is a direct sum of complexes $(\ref{H0D})$, which are acyclic in the case that $C\cap D \neq \emptyset$ by lemma $\ref{newlem}$.  The only contributions are from the divisors $D\in \delta_{\infty}$ such that $C \cap D= \emptyset$, which each contribute a   $\Q(-1)$ to $E_1^{0,2}$ and nothing else.
\end{proof}
By way of example: on $\overline{\Mod}_{0,6}$, with the standard dihedral ordering, there is a unique irreducible boundary component which does not meet the standard cell,
namely the divisor defined by $\{135\} \cup \{ 246\}$. On $\overline{\Mod}_{0,5}$ there are none.

\subsection{Proof of theorem \ref{thmCohomVanish}} Let $\ell =n-3$.
The restriction map 
 $$ \gr^W_0 H^{\ell} (\overline{\Mod}_{0,n},B)       \overset{\sim}{\To}    \gr^W_0 H^{\ell}(\overline{\Mod}_{0,n}\backslash A, B\backslash (B \cap A)) $$
is an isomorphism by a relative version of the Gysin sequence. 
 Therefore $\gr^W_0 m(A,B) \cong \Q(0)$ follows from lemma $\ref{lemcompletebroken}$, and $\gr^W_{2\ell} m(A,B)_{dR} \cong \Q(-\ell)$
 follows by duality.
 
 Next, apply the relative cohomology spectral sequence $(\ref{RelCohomSS})$ with $X=\overline{\Mod}_{0,n}$. By general facts about mixed Hodge structures (or from $(\ref{GysinSS})$), we know that 
 $$ E_1^{p,q}   = \bigoplus_{|I|=p}  H^{q}( B_{I} \backslash (B_{I} \cap A))$$
 has weights in the interval $[q,2q]$.  Therefore  it suffices to prove that 
 $$\gr^W_{2\ell-2}  E_1^{0,\ell}   = 0 \qquad \hbox{ and } \qquad \gr^W_{2\ell-2}   E_1^{1,\ell-1} = 0 \ .$$
First of all, 
$$\gr^W_{2\ell -2}\, E_1^{0,\ell}  =  \gr^W_{2\ell-2} H^{\ell} (\overline{\Mod}_{0,n} \backslash A) \cong (\gr^W_2 H^{\ell} (\overline{\Mod}_{0,n} , A))^{\vee} $$
by duality. This vanishes for $\ell>2$ on application of lemma  \ref{lemgrw2vanishes}.  For $\ell=2$ ($n=5$), it vanishes from the second part of the same lemma and the remark following it.
 
 On the other hand,
 $$\gr^W_{2\ell -2}  E_1^{1,\ell- 1}  =     \bigoplus_i  \, \gr^W_{2\ell -2} \,  H^{\ell-1}(B_i \backslash (B_i \cap A)) \cong    \bigoplus_i  \, ( \gr^W_{0} \, m(B_i, B_i \cap A))^{\vee}  $$
 by duality. The summands on the right vanish by lemma $\ref{newlem}$, since $A$ is cellular.

\bibliographystyle{plain}
\bibliography{main}

\end{document}